\def\o{\omega}
\def\O{\Omega}
\def\th{\theta}
\def\Th{\Theta}
\def\N{\mathbb{N}}
\def\Z{\mathbb{Z}}
\def\R{\mathbb{R}}
\def\Z{\mathbb Z}
\def\e{{\sf e}}
\def\r{{\rm r}}
\def\d{{\rm d}}
\def\bu{\bullet}
\def\({\left(}
\def\[{\left[}
\def\){\right)}
\def\]{\right]}
\def\si{\sigma}
\def\Si{\Sigma}
\def\G{\mathcal G}
\def\<{\langle}
\def\>{\rangle}
 \newtheorem{thm}{Theorem}[section]
 \newtheorem{cor}[thm]{Corollary}
 \newtheorem{lem}[thm]{Lemma}
 \newtheorem{prop}[thm]{Proposition}
 \theoremstyle{definition}
 \newtheorem{defn}[thm]{Definition}
 \theoremstyle{remark}
 \newtheorem{rem}[thm]{Remark}
 \newtheorem{ex}[thm]{Example}
 \numberwithin{equation}{section}
\numberwithin{equation}{section}
\begin{document}


\title{Topological Dynamics of Groupoid Actions}


\author{F. Flores and M. M\u antoiu
\footnote{
\textbf{2010 Mathematics Subject Classification:} Primary 22A22, 37B20, Secondary 37B05, 58H05.
\newline
\textbf{Key Words:} groupoid action, dynamical system, orbit, minimal, transitive, almost periodic, mixing, wandering, factor. 
\newline
{M. M. has been supported by the Fondecyt Project 1200884. 
}}
}



\maketitle


\begin{abstract}
Some basic notions and results in Topological Dynamics are extended to continuous groupoid actions in topological spaces. We focus mainly on recurrence properties. Besides results that are analogous to the classical case of group actions, but which have to be put in the right setting, there are also new phenomena. Mostly for groupoids whose source map is not open (and there are many), some properties which were equivalent for group actions become distinct in this general framework; we illustrate this with various counterexamples. 
\end{abstract}

\tableofcontents

\section{Introduction}\label{introduction}

Classically, topological dynamics is understood as the study of group and semigroup actions on topological spaces. It is an important chapter of modern mathematics originating from physics and the theory of differential equations, and its theoretical and practical outreach need not be outlined here. The point of view we adopt is that of the abstract theory, as exposed in references such as \cite{Au,dV,EE,Gl}.

\smallskip
Basic to topological dynamics in the classical sense is the idea of global symmetry. However, many interesting systems only present {\it local (or partial) forms of symmetry}. Such systems have acquired an increasing role in modern mathematics, and their impact in applications is already largely acknowledged. 

\smallskip
Partial symmetry is treated using concepts as {\it groupoids}, {\it partial group actions} or {\it inverse semigroup actions}. Several aspects are already very well developed. In fact our interest was raised by the applications to $C^*$-algebras, as in \cite{Ex,Ex1,Pa,Re,Wi}. However, up to our knowledge, the basic theory in the spirit of classical topological dynamics has not yet been developed systematically. In the literature, when notions and results are needed, they are briefly introduced and used in an ad hoc way. In addition, certain basic concepts barely appear in the partial symmetry setting.

\smallskip
The present article is dedicated to a systematic study of the most elementary dynamical notions in the framework of {\it continuous groupoid actions on topological spaces}. We emphasize {\it recurrence phenomena}. Some connections with partial group actions are also made, but the interaction with inverse semigroup dynamical systems, as well as a deeper study of more advanced dynamical notions, are deferred to a subsequent article.

\smallskip
As a general convention, all the topological spaces (including the groupoids) are Hausdorff. Local compactness is required only when needed. In certain cases we ask all the fibers of the groupoid to be non-compact, in order to avoid triviality. 

\smallskip
In most cases (but not always), guessing the analog of the standard notions in the groupoid case is quite straightforward. Both in the statements and the proofs, one has to take into account the fibered nature of a groupoid action. When a groupoid $\Xi$ with unit space $X$ acts on a topological space $\Si$\,, the action $\th$ is only defined on a subset of the product $\Xi\!\times\!\Si$\,. The element $\th_\xi(\si)$ is defined only if the domain $\d(\xi)$ of $\xi\in\Xi$ coincides with the image $\rho(\si)$ of $\si\in\Si$ by the {\it anchor map} $\rho:\Si\to X$, which is part of the definition of the action. There are also some technical difficulties that we discuss briefly: 

\smallskip
(a) When dealing with $\th_\xi(\si)$\,, sometimes one needs to approximate $\xi$ by a net $\{\xi_i\!\mid\!i\in I\}$\,. Usually $\th_{\xi_i}\!(\si)$ makes no sense, and this requires more involved arguments.

\smallskip
(b) When a group $\G$ acts over a topological space $\Si$\,, if ${\sf A}\subset\G$ and $U\subset\Si$ are open, ${\sf A}\,U\subset\Si$ is also open. Rather often, this plays an important role in the proofs. For groupoid actions, this is true only in the special case when the domain (source) map $\d:\Xi\to\Xi$ is open; we say that the groupoid is {\it open} if this holds. This may fail even in simple examples. In such cases some expected properties do not hold, as counterexamples will show. 

\smallskip
(c) Even if the map $\d$ is open, in most cases the translation $\th_\xi(U)$ of an open set is no longer open. For group actions this phenomenon is absent, since the action is composed of (global) homeomorphisms. Therefore, extra care will be needed in some proofs.

\smallskip
Let us describe the content of the article.

\smallskip
{\bf Section \ref{goam}} introduces the basic definitions. To read the article, the reader only needs to know what a (topological) groupoid is, and to be familiar with the elementary properties a classical dynamical system might have. The notations involving groupoids and continuous groupoid actions are introduced in {\bf \ref{grefier}}. The canonical action on the unit space will be very important; it is a terminal action, by Lemma \ref{ajuta}. Notions as orbit, orbit closure, invariant set and saturation are obvious extensions of the standard ones. However, {\it if the groupoid $\Xi$ is not open, the interior, the closure or the boundary of an invariant subset of $\Si$ might not be invariant}. This will have far-reaching negative consequences. The central notion of {\it recurrence set} $\widetilde\Xi^N_M$ appears in {\bf \ref{calau}}. For $M,N\subset\Si$\,, this consists in the elements $\xi\in\Xi$ under which some point of $M$ is sent inside $N$ (taking into account domain issues).

\smallskip
{\bf Section \ref{cavaler}.} In certain situations, a groupoid (or a groupoid action) is canonically constructed from other related mathematical objects. We present here a couple of such situations, stressing the way recurrence sets are determined by these objects. Our selection is probably inspired by our interest in groupoid $C^*$-algebras; geometrically oriented readers could prefer others. {\it Equivalence relations} are important in groupoid theory; for us, they will often serve as counterexamples. In fact, the most general of our counterexamples are subgroupoids of direct products between equivalence relations and groups. In {\bf \ref{castyanu}}, the usual {\it group actions} are encoded in two different ways to fit our setting. It is shown that the second way yields a better grasp at the level of recurrence. {\it The Deaconu-Renault groupoid} of a local homeomorphism appears in {\bf \ref{caterinka}}. In {\bf \ref{creasta}} and {\bf \ref{castanika}} we arrive at groupoid actions making {\it non-invariant restrictions} in larger, maybe global, dynamical systems; this includes partial group actions. In {\bf \ref{castanier}}, to a groupoid action $(\Xi,\th,\Si)$ we associate a larger {\it transformation (crossed product) groupoid} $\Xi\!\ltimes_\th\!\Si$\,. It only has a partial relevance for recurrence issues. But it will also be used later, in connection with factors and extensions. A {\it pull-back construction} appears in {\bf \ref{cabalier}}.

\smallskip
{\bf Section \ref{calorifiert}} mainly treats various types of {\it topological transitivity}. Consider the following properties of a continuous action $\th$ of the groupoid $\Xi$ in the topological space $\Si$\,:
$(i)$ $\Si$ is not the union of two proper invariant closed subsets, $(ii)$ non-empty open invariant subsets of $\,\Si$ are dense,
$(iii)$ for every $\,U,V\subset\Si$ open and non-void,  $\th_\xi(U)\cap V\ne\emptyset$ for some $\xi\in\Xi$\,,
$(iv)$ invariant subsets of $\,\Si$ are ether dense, or nowhere dense. For group actions, these properties (suitably reformulated) are equivalent and go under the common name of "topological transitivity". This also happens for groupoid actions, if $\Xi$ is open (i.\,e.\! the domain map is open). But if it is not, then one only has $(iv)\Rightarrow (iii)\Rightarrow (ii)\Rightarrow (i)$\,. After proving this and some connected results in Theorem \ref{pricinoasa}, we indicate a couple of counterexamples showing that in general none of the implications can be reverted. It is also shown that pointwise transitivity (existence of a dense orbit) implies $(iii)$ but not $(iv)$. The failure of some implications can be tracked back to the fact that the closure of an invariant set could not be invariant if the groupoid is not open. We decided to call $(iii)$ {\it recurrent transitivity} and $(iv)$ (the strongest notion) {\it topological transitivity}; it is debatable whether the terminology is the best one. Properties $(i)$ and $(ii)$ did not receive a name. Proposition \ref{uncorolar} makes the necessary specifications for the case of a Baire second-countable space $\Si$\,. Then we particularize to some of the examples from Section \ref{cavaler}. In {\bf \ref{pisica}} we introduce and study a final transitivity notion, {\it  the weak pointwise transitivity}. It is similar to pointwise transitivity, but the orbit closure of a point (that might not be invariant if $\Xi$ is not open) is replaced by the smallest closed invariant set containing the point.

\smallskip
{\bf Section \ref{cracifloc}} is concerned with limit points, recurrent points and wandering and non-wandering properties. 
{\it Limit sets} are mostly studied for actions of one of the groups $\Z$ or $\R$ on topological spaces, where one distinguishes between positive and negative limit points. A good reference is \cite[Chapter II.2]{dV}. We are going to adapt the basic part of the theory to groupoids. But besides being groups, $\Z$ and $\R$ have two extra features: (1) they have 'two ends' and (2) they are not compact. The first feature leads to various ramifications, as distinguishing between positive and negative limit sets. Trying to imitate this for groupoid actions is possible, but would require a specialized and rather intricate setting. On the other hand, too much compactness in a groupoid would make some constructions and results trivial or non-interesting. So in {\bf \ref{craciflor}}, we will restrict our attention to \textit{strongly non-compact groupoids}\,, those for which all the fibers $\Xi_x$ are non-compact. The limit points of a given point $\si$ are defined by their asymptotic behavior under the action, where "asymptotic" is encoded, equivalently, by complements of compact sets, diverging nets (both in $\Xi$) or suitable recurrence sets. They are contained in the orbit closure of $\si$ and form a closed subset having an attractor behavior. This subset is invariant if the groupoid is open, but a counterexample shows that in general this can fail. In {\bf \ref{craciclor}} a point is called {\it recurrent} if it is a limit point of itself. In Proposition \ref{recur} this is related to other types of descriptions, as in classical Dynamical Systems, but once again full equivalences hold under the openness assumption (a counterexample is presented). This assumption is also needed to insure invariance of the family of all recurrent points. {\it Wandering} and {\it non-wandering} are similar to the group case notions, but defined in terms of groupoid recurrence sets $\widetilde\Xi^W_W$ and compactness, where $W$ is a neighborhood of the point we study. The family of non-wandering points is closed; to be invariant one also requires $\Xi$ to be open and locally compact. If $\Si$ is compact, it is non-void and attracts the points of $\Si$ in a suitable sense. We indicate examples involving pull-backs and action groupoids.

\smallskip
{\bf Section \ref{calorifier}.} We start in {\bf \ref{golorofer}} with the set $\Si_{\rm fix}\subset\Si$ of {\it fixed points} of a groupoid action. It is invariant; it is also closed if $\Xi$ happens to be open, but not in general, as counterexamples show. We inspect the origin of the fixed points for global and partial group actions, for non-invariant restrictions, for the Deaconu-Renault groupoid and for pull-backs. In {\bf \ref{kalorifier}} we show the inclusion
\begin{equation}\label{pestich}
\Si_{\rm fix}\subset\Si_{\rm per}\subset\Si_{\rm wper}\cap\Si_{\rm alper}\subset\Si_{\rm wper}\cup\Si_{\rm alper}\subset\Si_{\rm rec}\subset\Si_{\rm nw}\,.
\end{equation} 
Besides fixed, recurrent and non-wandering points, already defined, we introduce three other types of points: {\it periodic}, {\it weakly periodic} and (most important) {\it almost periodic}. For the first two, terminology could be debated even in the group case. The precise meaning can be found in Definition \ref{indrasniesc}, where an adaptation to groupoids of the standard notion of {\it syndeticity} is also included. We discuss issues such as invariance and closure for the new sets.  In \cite{BBdN} the authors work with locally compact, second countable, open groupoids with compact unit space. A unit $x$ is called {\it a periodicoid} if its orbit is closed (i.e.\;compact). Among others they show that if, in addition, the groupoid is \'etale, the orbit of a periodicoid point is actually finite. Obviously, the notion makes sense and is relevant also for general groupoid actions. In {\bf \ref{coproduct}} we connect it with our periodicity. Proposition \ref{lacidraci} proves that a periodic point $\si$ has a compact orbit and in Proposition \ref{dracilaci} we prove the converse implication, adding some extra conditions. {\it Minimality} is explored in {\bf \ref{cralorifier}}. After stating the definition and providing the most elementary properties, we exhibit in Theorem \ref{sacacarez} the connection between minimality and almost periodicity. For group actions, this is the standard result that can be found in every textbook \cite{Au,dV,EE,Gl}. The proof is similar, but slightly more involved, because of the groupoid setting. Corollary \ref{oblu} deals with semisimplicity and point almost periodicity. In Proposition \ref{zdrikat} we provide conditions under which minimality implies the non-wandering property.

\smallskip
{\bf Section \ref{kalorfier}.} {\it Homomorphisms} (equivariant maps) are an important topic in Topological Dynamics. In {\bf \ref{ficatorel}} we study surjective homomorphisms (epimorphisms) in the framework of groupoid actions (equivariance now also requires a compatibility of the two anchor maps). They lead to the usual concepts of {\it extension} and {\it factor}. The canonical action of a groupoid $\Xi$ on the unit space is terminal, being a factor of any other $\Xi$-action. We study the fate under epimorphisms of most of the dynamical properties already introduced. Two main results are Theorem \ref{chiroare} (referring to the sets \eqref{pestich}) and Proposition \ref{morptranz} (treating various types of transitivity, including recurrent transitivity). Note that, by Example \ref{valioso4}, topological transitivity do not always transfer to factors when the groupoid is not open. Proposition \ref{garbanzos} refers to the behavior of minimality under epimorphisms. It also contains conditions under which a minimal subsystem of the factor admits a minimal pre-image and an almost periodic point of the factor is reached from an almost periodic element of the extension. Subsection {\bf \ref{fregadorel}} connects extensions of groupoid actions as in {\bf \ref{ficatorel}} with action (crossed product) groupoids as in {\bf \ref{castanier}}. In the recent preprint \cite{EK} the authors use transformation groupoids to study group action extensions. They develop a very interesting theory, that however has little in common with the content of the present article. By a straightforward generalization of a construction from \cite{EK}, given a {\it groupoid} action $\Th'\!=\big(\Xi,\th'\!,\Si'\big)$\,, we show that {\it there is a one-to-one correspondence between extensions of $\,\Th'$ and actions of the crossed product groupoid $\,\Xi(\Th')\!:=\Xi\,\ltimes_{\th'}\!\Si'$}. We also exhibit an isomorphism between two different crossed products. In Propositions \ref{vindeo} and \ref{deruta} we prove that the dynamical properties are the same under the mentioned bijective correspondence.

\smallskip
{\bf Section \ref{frigider}.}
We dedicate this short final section to convince the reader that mixing, as presented in Definition \ref{ixing}, is not an interesting concept outside the classical group case, if the anchor map $\rho:\Si\to X$ is surjective and the unit space $X$ is Hausdorff (standing assumptions in the present paper).

\section{Groupoids and groupoid actions}\label{goam}

\subsection{The framework}\label{grefier}

We deal with groupoids $\Xi$ over a unit space $\Xi^{(0)}\!\equiv X$, seen as small categories in which all the morphisms (arrows) are inverible. 
The source and range maps are denoted by ${\rm d,r}:\Xi\to \Xi^{(0)}$ and the family of composable pairs by $\,\Xi^{(2)}\!\subset\Xi\times\Xi$\,. For $M,N\subset X$ one uses the standard notations 
\begin{equation*}\label{faneaka}
\Xi_M\!:={\rm d}^{-1}(M)\,,\quad\Xi^N\!:={\rm r}^{-1}(M)\,,\quad\Xi_M^N:=\Xi_M\cap\Xi^N.
\end{equation*}
Particular cases are the $\d$-fiber $\Xi_x\equiv\Xi_{\{x\}}$\,, the  ${\rm r}$-fiber $\Xi^x\equiv\Xi^{\{x\}}$ and the isotropy group  $\Xi_x^x\equiv\Xi_{\{x\}}^{\{x\}}$ of a unit $x\in X$. Clearly $\Xi_x^x\,\Xi^x\!\subset\Xi^x$ and $\Xi_x\,\Xi_x^x\!\subset\Xi_x$\,. The disjoint union
\begin{equation*}\label{gigolo}
{\rm Iso}(\Xi):=\bigsqcup_{x\in X}\Xi_{x}^{x}=\{\xi\in\Xi\!\mid\! \d(\xi)=\r(\xi)\}
\end{equation*}
is called \textit{the isotropy bundle of the groupoid}.

\smallskip
The subset $\Delta$ of the topological groupoid $\Xi$ is called a {\it subgroupoid} if for every $(\xi,\eta)\in(\Delta\!\times\!\Delta)\cap\Xi^{(2)}$ one has $\xi\eta\in\Delta$\, and $\xi^{-1}\in\Delta$. This subgroupoid is {\it wide} if $\Delta^{(0)}=\Xi^{(0)}$. 

\smallskip
A topological groupoid is a groupoid $\Xi$ with a topology such that the inversion $\xi\mapsto\xi^{-1}$ and multiplication $(\xi,\eta)\mapsto \xi\eta$ are continuous. The topology in $\Xi^{(2)}$ is the topology induced by the product topology. Whenever the map $\d:\Xi\to X$ is open, we say that {\it the groupoid $\Xi$ is open}. Equivalent conditions are: (i) $\r:\Xi\to X$ is open and (ii) the multiplication is an open map. It is known that a locally compact groupoid possessing a Haar system is open. In particular, \'etale groupoids and Lie groupoids are open. For this reason, many texts in groupoid theory only deal with open groupoids.

\smallskip
An equivalence relation on $X$ is defined by $x\approx y$ if $x=\r(\xi)$ and $y={\rm d}(\xi)$ for some $\xi\in\Xi$\,. This leads to the usual notions of {\it orbit, invariant (saturated) set, saturation, transitivity}, etc. {\it The orbit of a point} $x$ will be denoted by $\mathcal O_x={\rm r}(\Xi_x)$\,. Its closure $\overline{\mathcal O}_x$ is called {\it an orbit closure}.

\begin{defn}\label{grupact}
{\it A groupoid action} is a 4-tuple $(\Xi,\rho,\th,\Si)$ consisting of a groupoid $\Xi$\,, a set $\Si$\,, a surjective map $\rho:\Si\rightarrow X$ ({\it the anchor}) and the action map
\begin{equation}\label{ganchor}
\th:\Xi\!\Join\!\Si:=\{(\xi,\si)\!\mid\!\d(\xi)=\rho(\si)\}\ni(\xi,\si)\mapsto{\th_\xi(\si)\equiv\xi\!\bu_\th\!\si}\in\Si
\end{equation} 
satisfying the axioms: 
\begin{enumerate}
\item $\rho(\si)\!\bu_\th\!\si=\si,\, \forall \,\si\in \Si$\,,
\item if $(\xi,\eta)\in \Xi^{(2)}$ and $(\eta,\si)\in \Xi\!\Join\!\Si$, then $(\xi,\eta\!\bu_\th\!\si)\in \Xi\!\Join\!\Si$ and $(\xi\eta)\!\bu_\th\!\si=\xi\!\bu_\th\!(\eta\!\bu_\th\!\si)$\,.
\end{enumerate} 
{\it An action of a topological groupoid} in a topological space is just an action $(\Xi,\rho,\th,\Si)$ where $\Xi$ is a topological groupoid, $\Si$ is a Hausdorff topological space and the maps $\rho,\th$ are continuous.
\end{defn}

If the action $\th$ is understood, we will write $\xi\bu\si$ instead of $\xi\bu_\th\si$. If $\rho$ is not supposed surjective, then $\rho(\Si)$ is an invariant subset of the unit space and only the reduction $\Xi_{\rho(\Si)}^{\rho(\Si)}$ really acts on $\Si$\,, so asking $\rho$ to be onto seems convenient.

\begin{ex}\label{startlet}
Each topological groupoid acts continuously in a canonical way on its unit space. In this case, we have $\Si=X$ and $\rho={\rm id}_X$, and then (note the special notation) $\xi\!\circ\!x:=\xi x\xi^{-1}$ as soon as $\d(\xi)=x$. Putting this differently, $\xi$ sends $\d(\xi)$ into $\r(\xi)$\,. One could also name this {\it the terminal action}; see Lemma \ref{ajuta}.
\end{ex}

\begin{ex}\label{novotel}
Let $(\Xi,\rho,\th,\Si)$ be a continuous groupoid action and $\Delta$ a wide subgroupoid of $\Xi$\,. The restricted action is defined by keeping the same anchor $\rho$ and just restricting the map $\th$ to
\begin{equation*}\label{prinparti}
\Delta\!\Join\!\Si=\{(\xi,\si)\in\Delta\!\times\!\Si\!\mid\!\d(\xi)=\rho(\si)\}=\{(\xi,\si)\in\Xi\!\Join\!\Si\!\mid\!\xi\in\Delta\}\,.
\end{equation*}
\end{ex}

\begin{ex}\label{valtoare}
The topological groupoid $\Xi$ also acts on itself, with $\Si:=\Xi$\,, $\rho:=\r$ and $\xi\bu\eta:=\xi\eta$\,.
\end{ex}

For $\xi\in\Xi\,,\,{\sf A},{\sf B}\subset\Xi\,,\,M\subset\Si$ we use the notations
\begin{equation}\label{botations}
{\sf A}{\sf B}:=\big\{\xi\eta\,\big\vert\,\xi\in{\sf A}\,,\,\eta\in{\sf B}\,,\,\d(\xi)=\r(\eta)\big\}\,,
\end{equation}
\begin{equation*}\label{ottations}
\xi\bu M:=\big\{\xi\bu\si\,\big\vert\,\si\in M\cap\rho^{-1}[\d(\xi)]\big\}\,,
\end{equation*}
\begin{equation*}\label{otattions}
{\sf A}\bu M:=\big\{\xi\bu\si\,\big\vert\,\xi\in{\sf A}\,,\si\in M\,, \d(\xi)=\rho(\si)\big\}=\bigcup_{\xi\in{\sf A}}\xi\bu M\,.
\end{equation*}
These sets could be void in non-trivial situations. 

\begin{rem}\label{caofi}
In \cite{Wi} it is shown that if the groupoid $\Xi$ is open then ${\sf A}\bu M\subset \Si$ is open whenever the sets ${\sf A}\subset\Xi$ and $M\subset \Si$ are open. For topological group actions  the product ${\sf A}M$ is open provided that only the subset $M$ is open. In addition, if ${\sf A},{\sf B}$ are subsets of the group, ${\sf A}{\sf B}$ is open whenever at least one of the subsets is. Examples below will show that for groupoids this is not true, and this will require some special care in some of our proofs. Note that, even for open groupoids, the translation $\xi\bu M$ of an open subset $M$ of $\Si$ could not be open.
\end{rem}

\begin{defn}\label{pinera}
We are going to use {\it orbits} $\mathfrak O_\si\!:=\Xi_{\rho(\si)}\!\bu\si$ and {\it orbit closures} $\overline{\mathfrak O}_\si$\,.  {\it The orbit equivalence relation} will be denoted by $\sim$\,. A subset $M\subset \Si$ is called {\it invariant} if $\xi \bu M\subset M$, for every $\xi\in\Xi$\,. If $N\subset\Si$\,, its {\it saturation} 
\begin{equation*}\label{satur}
{\sf Sat}(N)=\Xi\bu N=\!\!\bigcap\limits_{\substack{N\subset M \\ M\textup{ invariant}}}\!\!M
\end{equation*}
is the smallest invariant subset of $\Si$ containing $N$.
\end{defn}

\begin{prop}\label{intclo}
Assume that $\Xi$ is an open groupoid. The saturation of an open set is also open. The interior $M^\circ$, the closure $\overline M$ and the boundary $\partial M$ of an invariant subset $M$ of $\,\Si$ are also invariant. 
\end{prop}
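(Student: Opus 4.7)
The plan is to verify the four statements in succession, with the only substantial step being the invariance of the closure.

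First, for the saturation: if $U\subset\Si$ is open, then ${\sf Sat}(U)=\Xi\bu U$ is open by Remark \ref{caofi} (applied with ${\sf A}=\Xi$). Using this, invariance of $M^\circ$ follows immediately: $\Xi\bu M^\circ$ is open by the previous observation, and since $M^\circ\subset M$ with $M$ invariant, $\Xi\bu M^\circ\subset\Xi\bu M\subset M$. Thus $\Xi\bu M^\circ$ is an open subset of $M$, hence contained in its largest such subset $M^\circ$.

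The main step is the invariance of $\overline M$, and it is here that openness of $\d$ becomes essential. Fix $\xi\in\Xi$ and $\si\in\overline M$ with $\d(\xi)=\rho(\si)$, and let $V$ be an open neighborhood of $\xi\bu\si$. Continuity of $\th$ at $(\xi,\si)$ furnishes open neighborhoods ${\sf A}$ of $\xi$ and $U_0$ of $\si$ such that $\th$ sends $({\sf A}\!\times\!U_0)\cap(\Xi\!\Join\!\Si)$ into $V$. The obstacle to circumvent is that an approximating net $\si_i\to\si$ with $\si_i\in M$ need not satisfy $\rho(\si_i)=\d(\xi)$, so $\xi\bu\si_i$ may be undefined (an instance of issue (a) from the introduction). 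I would resolve this by shrinking $U_0$ to $U:=U_0\cap\rho^{-1}(\d({\sf A}))$, which is still an open neighborhood of $\si$ since $\d({\sf A})$ is open in $X$ by openness of $\Xi$ and contains $\rho(\si)=\d(\xi)$. Pick $\tau\in U\cap M$ (available since $\si\in\overline M$); then $\rho(\tau)\in\d({\sf A})$, so some $\eta\in{\sf A}$ satisfies $\d(\eta)=\rho(\tau)$. By the choice of ${\sf A}$ and $U_0$ we obtain $\eta\bu\tau\in V$, while invariance of $M$ gives $\eta\bu\tau\in M$. Thus $V\cap M\ne\emptyset$, and since $V$ was arbitrary, $\xi\bu\si\in\overline M$.

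Finally, for the boundary, I would first observe the general fact that $\Si\setminus M$ is invariant whenever $M$ is: if $\xi\bu\si\in M$ with $\si\notin M$, then $\si=\xi^{-1}\bu(\xi\bu\si)\in\xi^{-1}\bu M\subset M$ by invariance, a contradiction. Applying the closure step to both $M$ and $\Si\setminus M$, the sets $\overline M$ and $\overline{\Si\setminus M}$ are invariant, and hence so is their intersection $\partial M=\overline M\cap\overline{\Si\setminus M}$, since intersections of invariant subsets are trivially invariant.
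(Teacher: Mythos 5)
Your proof is correct, and the saturation and interior steps coincide with the paper's (the paper phrases the interior argument pointwise: an interior point $\si$ lies in some open $U\subset M$, so $\xi\bu\si\in\Xi\bu U\subset M$ with $\Xi\bu U$ open). Where you diverge is the closure. The paper never argues about $\overline M$ directly: it notes that $(M^\circ)^c=\overline{M^c}$ and that complements and differences of invariant sets are invariant, so invariance of $\overline M$ and of $\partial M=\overline M\setminus M^\circ$ both follow by duality from the single fact that interiors of invariant sets are invariant. You instead prove invariance of $\overline M$ by a direct approximation argument: shrink a continuity neighborhood $U_0$ of $\si$ to $U_0\cap\rho^{-1}(\d({\sf A}))$ so that points of $M$ near $\si$ admit translates by elements of ${\sf A}$ near $\xi$. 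This is sound, and it is in fact the same Fell-criterion-style technique the paper deploys later (in Propositions \ref{zets} and \ref{eusper}\,(ii)) where no duality shortcut is available; it has the pedagogical merit of showing exactly where openness of $\d$ enters when one must match domains of an approximating net, which is issue (a) of the introduction. The paper's route is shorter and isolates the openness hypothesis in one place ($\Xi\bu U$ open); since you already established that complements of invariant sets are invariant, you could have obtained $\overline M=\big((M^c)^\circ\big)^c$ for free and skipped your main step entirely.
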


\begin{proof}
If $N$ is an open set, ${\sf Sat}(N)=\Xi\bu N$ is open, as $\d$ is an open map; see Remark \ref{caofi}. 

\smallskip
One has $(M^\circ)^c=\overline{M^c}$ and $\partial M=\overline M\setminus M^\circ$. Since the difference of two invariant sets is clearly invariant, it is enough to show that $M^\circ$ is invariant. If $\sigma\in M$ is an interior point, there exists some open set $U\subset M$ containing $\si$, so we have 
$$
\xi\bu\si\in\Xi\bu U\subset\Xi\bu M=M,
$$ 
implying that $\xi\bu\si$ is also an interior point of $M$, since $\Xi\bu U$ is open. 
\end{proof}

Remark \ref{tutosh} and Example \ref{valioso} will show that the openness assumption cannot be removed. 

\begin{lem}\label{lavila}
For every $\si\in\Si$ one has $\rho\big(\mathfrak O_\si\big)=\mathcal O_{\rho(\si)}$\,. The map $\rho$ sends $\bu$-invariant subsets of $\,\Si$ into $\circ$-invariant subsets of  the unit space $X$ (see Example \ref{startlet}).
\end{lem}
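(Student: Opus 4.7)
The plan is to first establish the fundamental compatibility between the anchor and the action, namely that
\begin{equation*}
\rho(\xi\bu\si)=\r(\xi)\quad\text{whenever }(\xi,\si)\in\Xi\!\Join\!\Si,
\end{equation*}
and then derive both claims from it. To prove this compatibility, I would apply axiom $(2)$ of Definition \ref{grupact} to the composable pair $(\r(\xi),\xi)\in\Xi^{(2)}$ together with $(\xi,\si)\in\Xi\!\Join\!\Si$. The axiom guarantees that $(\r(\xi),\xi\bu\si)\in\Xi\!\Join\!\Si$, which by the definition of $\Xi\!\Join\!\Si$ in \eqref{ganchor} means exactly $\d(\r(\xi))=\rho(\xi\bu\si)$, i.e. $\r(\xi)=\rho(\xi\bu\si)$.

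For the first assertion, I would use the above compatibility directly:
\begin{equation*}
\rho\big(\mathfrak O_\si\big)=\rho\big(\Xi_{\rho(\si)}\!\bu\si\big)=\{\rho(\xi\bu\si)\!\mid\!\xi\in\Xi_{\rho(\si)}\}=\{\r(\xi)\!\mid\!\xi\in\Xi_{\rho(\si)}\}=\r\big(\Xi_{\rho(\si)}\big)=\mathcal O_{\rho(\si)}.
\end{equation*}
This uses only the definitions of $\mathfrak O_\si$ and $\mathcal O_{\rho(\si)}$ recalled in Definition \ref{pinera} and in \ref{grefier}.

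For the second assertion, let $M\subset\Si$ be $\bu$-invariant and take $x\in\rho(M)$, say $x=\rho(\si)$ with $\si\in M$. For any $\xi\in\Xi$ with $\d(\xi)=x$, invariance of $M$ gives $\xi\bu\si\in M$, hence by the compatibility identity $\xi\circ x=\r(\xi)=\rho(\xi\bu\si)\in\rho(M)$, showing that $\rho(M)$ is $\circ$-invariant in the sense of Example \ref{startlet}.

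I do not expect any real obstacle: the statement is essentially a bookkeeping consequence of the definition. The only subtle point is remembering that the second axiom of a groupoid action forces the anchor to intertwine the action with the range map, and the cleanest way to extract this is to plug in a unit of the form $\r(\xi)$ into the cocycle-type identity.
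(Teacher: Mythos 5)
Your proof is correct and follows essentially the same route as the paper: both arguments reduce everything to the identity $\rho(\xi\bu\si)=\r(\xi)$, which the paper uses without comment and which you carefully derive from axiom $(2)$ of Definition \ref{grupact}. The extra derivation is a welcome bit of rigor but does not change the substance of the argument.
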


\begin{proof}
If $y\in\rho\big(\mathfrak O_\si\big)$\,, for some $\xi\in\Xi_{\rho(\si)}$ one has
$$
y=\rho(\xi\bu\si)=\r(\xi)\in\mathcal O_{\rho(\si)}\,.
$$ 
On the other hand, if $y\in\mathcal O_{\rho(\si)}$\,, there exists $\xi\in\Xi_{\rho(\si)}$ with $y=\r(\xi)=\rho(\xi\bu\si)\in\rho\big(\mathfrak O_\si\big)$ and the equality is proven. From this, the last part is trivial. 
\end{proof}

\subsection{Recurrence sets}\label{calau}

\begin{defn}\label{giudat}
For every $M,N\subset\Si$ one defines {\it the recurrence set} as 
\begin{equation*}\label{rrecur}
\widetilde\Xi_M^N=\{\xi\in\Xi\!\mid\!(\xi\bu M)\cap N\ne\emptyset\}\,.
\end{equation*} The set $\widetilde\Xi_M^N$ is increasing in $M$ and $N$. In the group case, one also uses the term "dwelling set". 
\end{defn}  

\smallskip
The recurrence set can be also described in terms of the function
\begin{equation*}\label{badguy}
\Xi\!\Join\!\Si\ni(\xi,\si)\overset{\vartheta}{\to}(\xi\bu\si,\si)\in\Si\!\times\!\Si.
\end{equation*}
If we denote by $q$ the projection on the first variable $\Xi\!\times\!\Si\to\Xi$ and by $\mathfrak q$ its restriction to $\Xi\!\Join\!\Si$\,, then
\begin{equation*}\label{recurr}
\widetilde\Xi_M^N\!:=\mathfrak q\big[\vartheta^{-1}(N\!\times\!M)\big]\,.
\end{equation*}

\begin{rem}\label{iverse}
Note that $\,\widetilde\Xi_M^N=\bigcup_{\si\in M}\widetilde\Xi_\si^N$, where 
$$
\widetilde\Xi_\si^N\equiv\widetilde\Xi_{\{\si\}}^N\!=\{\xi\in\Xi_{\rho(\si)}\!\mid\!\xi\bu\si\in N\}\subset\Xi_{\rho(\si)}^{\rho(N)}\subset\Xi_{\rho(\si)}\,.
$$
It follows immediately that $\widetilde\Xi_M^N\subset\Xi_{\rho(M)}^{\rho(N)}$\,. Actually, when $\rho$ is also injective, one has $\,\widetilde\Xi_M^N=\Xi_{\rho(M)}^{\rho(N)}$\,. This applies, in particular, to Example \ref{startlet}. {\it The stabilizer} $\widetilde\Xi_\si^\si=\{\xi\in\Xi\!\mid\!\xi\bu\si=\si\}$ is a closed subgroup of the isotropy group $\Xi_{\rho(\si)}^{\rho(\si)}$\,. They coincide whenever $\rho$ is injective.
\end{rem}

\begin{ex}\label{labar}
In the setting of Example \ref{valtoare} one has
$$
\widetilde\Xi_M^N=\big\{\xi\in\Xi\,\big\vert\, \Xi^{\d(\xi)}\!\cap M\cap\xi^{-1}N\ne\emptyset\big\}\,.
$$
\end{ex}

\begin{ex}\label{lebar}
Given a topological space $\Si$\,, {\it the fundamental groupoid $\Xi$}\,, typically denoted by $\Pi_1(\Si)$\,, is just the set of homotopy classes of paths between pairs of points. The space of all paths is given the compact-open topology and this induces in $\Xi$ the quotient topology. We observe that $X$ is basically $\Si$ and the action $\xi\circ\si$ moves the starting point $\si$ through the path $\xi$\,. The recurrence sets are expressed in terms of the path-connectedness of $\Si$\,: 
$$
\widetilde\Xi_M^N=\Xi_M^N=\{\gamma:[0,1]\rightarrow\Si \mid \gamma \textup{ continuous },\gamma(0)\in M\textup{ and }\gamma(1)\in N\}\textup{ modulo homotopy.}
$$ 
 The stabilizer of $\si\in X$ is the fundamental group $\Xi_{\si}^{\si}\!=\pi(\Si,\si)$ of $\Si$ rooted at $\si$.
\end{ex}

The next straightforward results will be useful in the next sections.

\begin{lem}\label{ushor}
If $M,N\subset\Si$ and $\eta_1,\eta_2\in\Xi$\,, then $\widetilde\Xi_{\eta_1\bu M}^{\eta_2\bu N}=\eta_2\,\widetilde\Xi_M^N\eta_1^{-1}$ and $\big(\widetilde\Xi_M^N\big)^{-1}\!=\widetilde\Xi_N^M$\,. 
\end{lem}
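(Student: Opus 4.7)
The plan is to verify both identities by a direct application of the definition of $\widetilde\Xi_M^N$ together with the action axioms, paying attention to the domain/range constraints implicit in the products of subsets defined in \eqref{botations}.

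For the second equality, I would start from $\xi\in\big(\widetilde\Xi_M^N\big)^{-1}$, i.e.\ $\xi^{-1}\in\widetilde\Xi_M^N$, and pick $\si\in M$ with $\d(\xi^{-1})=\r(\xi)=\rho(\si)$ and $\tau:=\xi^{-1}\!\bu\si\in N$. Then Remark \ref{iverse} gives $\rho(\tau)=\r(\xi^{-1})=\d(\xi)$, and the second action axiom together with $\xi\xi^{-1}=\r(\xi)$ and the first axiom yields $\xi\!\bu\tau=\si\in M$. Hence $\xi\in\widetilde\Xi_N^M$. The reverse inclusion follows either by the same argument swapping the roles of $M$ and $N$, or by applying what we just proved to $N,M$ and then inverting.

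For the first equality, the forward inclusion takes $\xi\in\widetilde\Xi_{\eta_1\bu M}^{\eta_2\bu N}$ and produces $\mu\in M$, $\nu\in N$ with $\si:=\eta_1\!\bu\mu$ well defined, $\d(\xi)=\rho(\si)$, and $\xi\!\bu\si=\eta_2\!\bu\nu$. The constraints on the anchor force $\d(\xi)=\r(\eta_1)$ and $\r(\xi)=\rho(\xi\!\bu\si)=\r(\eta_2)$, so the product $\zeta:=\eta_2^{-1}\xi\eta_1$ is a composable chain in $\Xi$. Applying the second action axiom to $\xi\!\bu(\eta_1\!\bu\mu)=\eta_2\!\bu\nu$ and cancelling gives $\zeta\!\bu\mu=\nu\in N$, so $\zeta\in\widetilde\Xi_M^N$ and $\xi=\eta_2\zeta\eta_1^{-1}\in\eta_2\,\widetilde\Xi_M^N\eta_1^{-1}$. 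Conversely, given $\zeta\in\widetilde\Xi_M^N$ with $\d(\eta_2)=\r(\zeta)$ and $\d(\zeta)=\d(\eta_1)$ (the composability required by \eqref{botations}), and $\mu\in M$ with $\zeta\!\bu\mu\in N$, one sets $\si:=\eta_1\!\bu\mu\in\eta_1\!\bu M$ and checks that all intermediate compositions in $\big(\eta_2\zeta\eta_1^{-1}\big)\!\bu\si=\eta_2\!\bu(\zeta\!\bu\mu)$ are legitimate under $\rho$ and $\d,\r$, landing in $\eta_2\!\bu N$.

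The main obstacle, such as it is, is purely bookkeeping: one must repeatedly match $\rho$'s of action points with $\d$'s and $\r$'s of groupoid elements in order to make sure that every $\bu$ and every groupoid product that appears is actually defined, and that the composability clauses built into the notation $\eta_2\widetilde\Xi_M^N\eta_1^{-1}$ follow automatically from the recurrence data rather than having to be imposed as extra hypotheses.
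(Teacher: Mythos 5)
Your proof is correct and follows essentially the same route as the paper's: express membership in $\widetilde\Xi_{\eta_1\bu M}^{\eta_2\bu N}$ via the defining condition $\xi\bu(\eta_1\bu\mu)=\eta_2\bu\nu$, use the action axioms to rewrite it as $\big(\eta_2^{-1}\xi\eta_1\big)\bu\mu=\nu$, and conclude by conjugation. You merely spell out the composability bookkeeping that the paper's chain of equivalences leaves implicit, and you supply the proof of the second identity, which the paper omits as routine.
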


\begin{proof}
We only show the first equality:
$$\begin{aligned}
\xi\in\widetilde\Xi_{\eta_1\bu M}^{\eta_2\bu N}\,&\Leftrightarrow\,\exists\ \si\in M\,,\tau\in N\ \,{\rm with}\ \,\xi\bu(\eta_1\bu\si)=\eta_2\bu\tau\\
&\Leftrightarrow\,\exists\ \si\in M\,,\tau\in N\ \,{\rm with}\ \,\big(\eta_2^{-1}\xi\,\eta_1\big)\bu\si=\tau\\
&\Leftrightarrow\,\eta_2^{-1}\xi\,\eta_1\in\widetilde\Xi_M^N\\
&\Leftrightarrow\,\xi\in\eta_2\,\widetilde\Xi_M^N\eta_1^{-1}.
\end{aligned}$$
\end{proof}

\begin{lem} \label{triviallem}
Let $M,N\subset\Si$\,. Then 
$$
{\sf Sat}(M)\cap N\ne\emptyset\Leftrightarrow{\sf Sat}(M)\cap{\sf Sat}(N)\ne\emptyset\Leftrightarrow\,\widetilde\Xi_M^N\ne\emptyset\,. 
$$
\end{lem}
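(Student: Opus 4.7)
The plan is to prove the double equivalence by chasing definitions. The first equivalence, ${\sf Sat}(M)\cap N\ne\emptyset\Leftrightarrow\widetilde\Xi_M^N\ne\emptyset$, is essentially a tautology once one unpacks Definition \ref{pinera}: since ${\sf Sat}(M)=\Xi\bu M$, saying that ${\sf Sat}(M)\cap N$ is non-void amounts to exhibiting $\si\in M$ and $\xi\in\Xi$ with $\d(\xi)=\rho(\si)$ such that $\xi\bu\si\in N$, and by the description of $\widetilde\Xi_M^N$ recalled after Definition \ref{giudat}, this is precisely the assertion that $\xi$ belongs to $\widetilde\Xi_M^N$. So I would just write one sentence for this.

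The second equivalence ${\sf Sat}(M)\cap N\ne\emptyset\Leftrightarrow {\sf Sat}(M)\cap{\sf Sat}(N)\ne\emptyset$ is the content of the lemma. The forward direction is immediate since $N\subset {\sf Sat}(N)$. For the non-trivial converse, I would take $\tau\in{\sf Sat}(M)\cap{\sf Sat}(N)$ and write it in two ways as $\tau=\xi_1\bu\si_1=\xi_2\bu\si_2$ with $\si_1\in M$ and $\si_2\in N$. Then I would apply $\xi_2^{-1}$ on the left, using Definition \ref{grupact}(2) with the pair $(\xi_2^{-1},\xi_2)\in\Xi^{(2)}$ and subsequently with $(\xi_2^{-1},\xi_1)$, to conclude $\si_2=(\xi_2^{-1}\xi_1)\bu\si_1\in\Xi\bu M={\sf Sat}(M)$, whence $\si_2\in{\sf Sat}(M)\cap N$.

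The only mildly delicate point to verify along the way is that the composabilities hold: one needs $\d(\xi_2^{-1})=\rho(\xi_1\bu\si_1)$ and $\d(\xi_2^{-1})=\r(\xi_1)$. Both reduce to the identity $\rho(\xi\bu\si)=\r(\xi)$, which itself follows from Definition \ref{grupact}(2) applied to $\xi^{-1}\xi=\rho(\si)$ (giving $(\xi^{-1},\xi\bu\si)\in\Xi\!\Join\!\Si$ and hence $\d(\xi^{-1})=\rho(\xi\bu\si)$). Since this is a routine consequence of the axioms (and is used implicitly throughout the paper), I would state it in a single parenthetical remark rather than as a separate lemma. That bookkeeping is the only non-cosmetic step; the rest is symbol-pushing.
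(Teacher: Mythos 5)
Your proposal is correct and follows essentially the same route as the paper: the crux in both is the observation that a common point $\xi_1\bu\si_1=\xi_2\bu\si_2$ of the two saturations yields $\si_2=(\xi_2^{-1}\xi_1)\bu\si_1$, with composability guaranteed by the identity $\rho(\xi\bu\si)=\r(\xi)$. The only difference is organizational (you split the chain into two separate equivalences, the paper argues cyclically), which does not affect the substance.
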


\begin{proof}
One has $\,{\sf Sat}(M)\cap{\sf Sat}(N)\ne\emptyset$\, if and only if there exist $\xi_1,\xi_2\in\Xi$\,, $\si\in M$ and $\tau\in N$ such that $\xi_1\!\bu\!\si=\xi_2\!\bu\!\tau$, which is equivalent to $\big(\xi_2^{-1}\xi_1\big)\!\bu\!\si=\tau\in N$, so we have $\xi_2^{-1}\xi_1\in\widetilde\Xi_M^N$\, and $\big(\xi_2^{-1}\xi_1\big)\!\bu\!\si\in {\sf Sat}(M)$. We used the fact that
$$
\d\big(\xi_2^{-1}\big)=\r(\xi_2)=\rho(\xi_2\!\bu\!\tau)=\rho(\xi_1\!\bu\!\si)=\r(\xi_1)\,.
$$

For the converse: If $\widetilde\Xi_M^N\ne\emptyset$\,, there exists $\xi\in\Xi$ such that $\xi\bu\si=\tau$, with $\si\in M$ and $\tau\in N$\,. Then ${\sf Sat}(M)\cap N\ne\emptyset$\,, from which $\,{\sf Sat}(M)\cap{\sf Sat}(N)\ne\emptyset$ follows.
\end{proof}

\begin{rem}\label{baniciu}
Let $(\Xi,\rho,\th,\Si)$ be a continuous groupoid action and $\Delta$ a wide subgroupoid of $\Xi$\,. In terms of the restricted action from Example \ref{novotel}, if $M,N\subset\Si$\,, the contention $\widetilde{\Delta}^N_M\subset\widetilde{\Xi}^N_M$ between the corresponding recurrence sets is obvious. It is also clear that the $\Delta$-orbit of any point of $\Si$ is contained in the $\Xi$-orbit of this point and that the invariant sets under $\Xi$ are also invariant under $\Delta$\,. From this one deduces many simple connections between dynamical properties of the two actions, that we will not write down.
\end{rem}

\section{Some examples}\label{cavaler}

\subsection{Equivalence relations}\label{castandyu}

If $\Pi\subset X\!\times\!X$ is an equivalence relation on the Hausdorff topological space $X$, one can make $\Pi$ into a topological groupoid by using the product topology in $X\!\times\!X$ and the operations 
\begin{equation*}
\begin{split}
\d(x,y)=(y,y)\,,\quad &\r(x,y)=(x,x)\,,\quad (x,y)(y,z)=(x,z)\,,\quad(x,y)^{-1}\!=(y,x)\,.   
\end{split}
\end{equation*} 
The unit space is ${\sf Diag}(X)$ and we identify it with $X$, via the homeomorphism $(x,x)\mapsto x$\,.
As a particular case of Example \ref{startlet}, the groupoid $\Xi:=\Pi$ acts in a canonical way on $X$ by  
$$
(x,y)\circ y=x,\quad \forall\,x,y\in X.
$$ 
For every $M,N\subset X$ we get
\begin{equation}\label{uncazz}
\widetilde\Xi_M^N=\Xi_M^N=\Pi\cap(N\!\times\!M)\,.
\end{equation}
There are two extreme particular cases: (i) $\Pi={\sf Diag}(X)$ ({\it the trivial groupoid}), for which $\Xi_M^N$ may be identified with $N\cap M$, and (ii) $\Pi=X\!\times\! X$ ({\it the pair groupoid}), when $\Xi_M^N=N\!\times\!M$. Actually, an equivalence relation on $X$ is a wide subgroupoid of the pair groupoid.

\begin{rem}\label{tutosh}
We keep the same notations. The first projection $X\!\times\! X\to X$ is always open. The restriction to a subset ($\Pi$ in our case)  may not be an open function in general. (However, when $\Pi$ is an open subset in $X\!\times\! X$, it is also an open groupoid.) One special example \cite[Ex.\,6.2]{Wi} consists of setting $X=\mathbb R$\,, the relation being $x\,\Pi\,y \Leftrightarrow x,y\in[-1,1]\textup{ or }x=y$\,. This groupoid is Haussdorf and locally compact, but it is not open: observe that $\Xi\circ\big(\!-\tfrac{1}{2},\tfrac{1}{2}\big)=[-1,1]$\,. This is relevant for Remark \ref{caofi}, Proposition \ref{intclo} and their subsequent consequences.
\end{rem}

Even if the equivalence relation is open, very often "translations" of open sets are not open. For the pair groupoid, if $(x,y)\in X\times X$ and $y\in U\subset X$ is open, then $(x,y)\circ U=\{x\}$\,.

\smallskip
Non-open equivalence relations will be used repeatedly as counterexamples. It is true, however, that in some situations one considers on equivalence relations topologies which are different from the one induced from the Cartesian product.

\subsection{Group actions}\label{castyanu}

We indicate two ways to encode group actions by groupoids. The second one will be convenient for our purposes. We thought it would be interesting to also mention the first one, since it seems natural.

\begin{ex}\label{starlet}
As a particular case of Example \ref{startlet}, {\it  the transformation groupoid} $\Xi\equiv\G\ltimes_\gamma\!X$ associated to the continuous action $\gamma$ of the topological group $\G$ on the topological space $X$ naturally acts on $X$ by $(a,x)\circ x:=\gamma_a(x)$\,. We recall that, as a topological space, it is just $\G\times X$. The composition is $\big(b,\gamma_a(x)\big)(a,x):=(ba,x)$ and inversion reads $(a,x)^{-1}:=\big(a^{-1}\!,\gamma_a(x)\big)$\,. If $M,N\subset\Si\equiv X$, then
$$
\widetilde\Xi_M^N=\Xi_M^N=\big\{(a,x)\in\G\!\times\!M\,\big\vert\,\gamma_a(x)\in N\big\}\subset\G\!\times\!X.
$$
The first projection $p:\G\!\times\!X\to\G$ restricts to a surjection
\begin{equation}\label{debila}
p:\widetilde\Xi_M^N\to{\rm Rec}_\gamma(M,N):=\big\{a\in\G\,\big\vert\,\gamma_a(M)\cap N\ne\emptyset\big\}\subset\G,
\end{equation}
where {\it ${\rm Rec}_\gamma(M,N)$ is the usual recurrence set for dynamical systems} \cite{Au,dV,EE}. Injectivity fails in general: for instance, $\widetilde\Xi_X^X=\G\!\times\!X$ while ${\rm Rec}_\gamma(X,X)=\G$\,. On the other hand, if $M$ or $N$ are singletons, injectivity holds. Using slightly simplified notations, one has $\widetilde\Xi_{x_0}^N\cong{\rm Rec}_\gamma(x_0,N)$ and $\widetilde\Xi^{x_0}_M\cong{\rm Rec}_\gamma(M,x_0)$\,. Although the relation \eqref{debila} is quite concrete, it will not be precise enough to make suitable connections between dynamical properties in the group and in the groupoid framework. 
\end{ex}

\begin{ex}\label{bunul}
So we implement differently the classical dynamical system $(\G,\gamma,\Si)$ (for a better correspondence of notations, we set $\Si$ for the space of the group action). The group is an open groupoid in the obvious way; so we have $\Xi:=\G$ and the unit space $X=\{\e\}$ is only composed of the unit of the group. The source and the domain maps are constant, the same being true for $\rho:\Si\to\{\e\}$\,; it follows that $\G\!\Join\!\Si=\G\!\times\!\Si$\,. One sets $a\bu\si:=\gamma_a(\si)$ for every $a\in\G,\si\in\Si$ (thus $\th=\gamma$). Note that this is not covered by Examples \ref{startlet} or \ref{valtoare}. A simple inspection of the definitions shows that 
\begin{equation}\label{bunika}
\widetilde\Xi_S^T\equiv\widetilde\G_M^N={\rm Rec}_\gamma(M,N)\,,
\end{equation} 
and this will be very convenient below.
\end{ex}

\subsection{The Deaconu-Renault groupoid}\label{caterinka}

Let $\nu:X\to X$ be a local homeomorphism of the Hausdorff topological space $X$. 
To unify many constructions in groupoid $C^*$-algebras, one defines {\it the Deaconu-Renault groupoid} \cite{De,Re}
$$
\Xi(\nu):=\big\{(x,k-l,y)\,\big\vert\,x,y\in X\,,k,l\in\N\,,\nu^k(x)=\nu^l(y)\big\}\,,
$$
with structure maps
$$
(x,n,y)(y,m,z):=(x,n+m,z)\,,\quad(x,n,y)^{-1}:=(y,-n,x)\,,
$$
$$
\d(x,n,y):=y\,,\quad\r(x,n,y):=x\,.
$$
With a suitable topology (not needed here), it turns into a Hausdorff \'etale groupoid over $X$. If $\nu$ is a (global) homeomorphism, this results in the transformation groupoid of Example \ref{starlet} for the group $\G=\Z$\,.

\smallskip
An important tool is {\it the canonical cocycle} (a groupoid morphism)
$$
c:\Xi(\nu)\to\mathbb Z\,,\quad c(x,n,y):=n\,,
$$
which is, of course, the restriction to $\Xi(\nu)$ of the middle projection. For every $M,N\subset X$ set
$$
\mathbb Z_\nu(N,M):=\big\{k-l\,\big\vert\,\nu^k(N)\cap\nu^l(M)\ne\emptyset\big\}\subset\Z\,.
$$
The orbit of $y$ is $\mathcal O_y=\{x\in X\mid\Z_\nu(x,y)\ne\emptyset\}$\,, where notationally we identify singletons to points. The canonical cocycle restricts to a surjection
\begin{equation}\label{surjdr}
c:\widetilde{\Xi(\nu)}^N_M\equiv\Xi(\nu)^N_M\to\mathbb Z_\nu(N,M)\,.
\end{equation}
In contrast with the global case, if $M=\{y\}$\,, \eqref{surjdr} could still fail to be injective. But clearly $c$ restricts to a one to one map allowing to identify $\Xi(\nu)^x_y$ with $\mathbb Z_\nu(x,y)$ for every $x,y\in X$\,.

\subsection{Non-invariant restrictions}\label{creasta}

Let $(\Xi,\rho,\bu,\Si)$ be a continuous groupoid action. Let $\underline\Si$ an open subset of $\Si$\,; we do not suppose it invariant. One defines $\underline X:=\rho(\underline\Si)$\,, so $\underline\rho:=\rho|_{\underline\Si}:\underline\Si\to\underline X$ is a continuous surjection. We also have the (maybe non-invariant) groupoid restriction 
\begin{equation}\label{partz}
\underline\Xi:=\Xi_{\underline X}^{\underline X}=\{\xi\in\Xi\!\mid\!\d(\xi),\r(\xi)\in\underline X\}\,,
\end{equation}
that acts naturally by the restriction of $\bu$ on $\underline\Si$\,. Obviously 
$$
\underline\Xi\!\Join\!\underline\Si=(\Xi\!\Join\!\Si)\cap\big(\underline\Xi\!\times\!\underline\Si\big)\,.
$$ 
If $M\subset\Si$ is $\bu$-invariant, $M\cap\underline\Si$ is clearly $\underline\bu$-invariant. If $M,N\subset\Si$\,, one easily checks that 
\begin{equation}\label{curestrictie}
\widetilde{\underline\Xi}^{\,N\cap\underline\Si}_{\,M\cap\underline\Si}\subset\underline\Xi\cap\widetilde\Xi^N_M=\widetilde{\underline\Xi}^{\,N\cap\,\rho^{-1}[\rho(\underline\Si)]}_{\,M\cap\,\rho^{-1}[\rho(\underline\Si)]}\,.
\end{equation}
One gets equality whenever $\underline\Si$ is {\it $\rho$-saturated}, i.e.\! when $\rho^{-1}\big[\rho(\underline\Si)\big]=\underline\Si$\,, in particular when $\rho$ is a bijection. On the other hand, if $M,N\subset\underline\Si$\,, one always gets equality in \eqref{curestrictie}, and in this case
\begin{equation}\label{cuprecista}
\widetilde{\underline\Xi}^{\,N}_{\,M}=\underline\Xi\cap\widetilde\Xi^N_M=\widetilde\Xi^N_M\,.
\end{equation}
To get the last term, one notices that $\widetilde\Xi^N_M\subset\Xi^{\rho(N)}_{\rho(M)}\subset\underline\Xi$\,, since $\rho(M),\rho(N)\subset\underline X$\,, or proceeds directly.

\subsection{Partial group actions}\label{castanika}

Let $\G$ be a group assumed, for simplicity, to be discrete. The unit is denoted by $\e$\,. 

\begin{defn}\label{iexel}
{\it A partial action} \cite{Ex1} of $\G$ on the topological space $Y$ is a family of homeomorphisms
$$
\beta:=\Big\{Y_{a^{-1}}\!\overset{\beta_a}{\longrightarrow}Y_a\,\Big\vert\,a\in\G\Big\}
$$
between open subsets of $Y$ satisfying 
\begin{enumerate}
\item[(i)] $Y_\e=Y$ and $\beta_\e={\rm id}_X$\,,
\item[(ii)] $\beta_a\circ\beta_b$ is a restriction of $\beta_{ab}$ for every $a,b\in\G$\,.
\end{enumerate}
\end{defn}
The domain of the composition above is 
$$
{\rm dom}(\beta_a\circ\beta_b):=\big\{y\in Y_{b^{-1}}\,\big\vert\,\beta_b(y)\in Y_{a^{-1}}\big\}=\beta_b^{-1}\big(Y_{a^{-1}}\big)\,.
$$ 
It is easily shown that $\beta_a^{-1}\!=\beta_{a^{-1}}$ for any element $a$\,. To such a partial action, one associates \cite{Ab} an \'etale groupoid $\Xi[\beta]\equiv\G\ltimes_{[\beta]}\!Y$ over $Y$. As a set it is 
$$
\G\ltimes_{[\beta]}\!Y:=\{(a,y)\,\big\vert\,a\in\G,y\in Y_{a^{-1}}\}\,,
$$
on which one considers the induced product topology. The structure maps are
$$
\d(a,y):=y\,,\quad\r(a,y):=\beta_a(y)\,,
$$
$$
\big(a,\beta_b(y)\big)(b,y):=(ab,y)\,,\quad(a,y)^{-1}\!:=\big(a^{-1},\beta_a(y)\big)\,.
$$
If the action is global, meaning that $Y_a=Y$ for every $a$\,, then we recover the situation of Example \ref{starlet}.

\smallskip
The subset $T\subset Y$ is said to be {\it $\beta$-invariant} if $\beta_a(y)\in T$ whenever $a\in\G$ and $y\in T\cap Y_{a^{-1}}$. Of course, this happens exactly when $T$ is invariant, seen as a set of units of $\Xi[\beta]$\,.

\smallskip
For $S,T\subset Y$ it seems natural to define {\it the recurrence set}
\begin{equation}\label{vomitiv}
{\rm Rec}_{[\beta]}(S,T):=\big\{a\in\G\,\big\vert\,\exists\,y\in Y_{a^{-1}}\cap S\,,\,\beta_a(y)\in T\big\}\,.
\end{equation}
Very much as in Example \ref{starlet}, one gets
$$
\widetilde{\Xi[\beta]}_S^T=\Xi[\beta]_S^T=\big\{(a,y)\in\G\!\times\!S\,\big\vert\,y\in Y_{a^{-1}},\,\beta_a(y)\in T\big\}\subset\G\!\ltimes_{[\beta]}\!Y.
$$
The first projection $(a,y)\to p(a,y)\!:=a$ restricts to a surjection
\begin{equation}\label{debilla}
p:\widetilde{\Xi[\beta]}_S^T\to{\rm Rec}_{[\beta]}(S,T)\,.
\end{equation}
If $S=\{y_0\}$ is a singleton, injectivity holds and one has $\widetilde{\Xi[\beta]}_{y_0}^T\cong{\rm Rec}_{[\beta]}(y_0,T)$\,.

\smallskip
A very direct way to get a partial group action is to make a non-invariant restriction in a group (global) action. In a certain sense this is the most general situation, since any partial action can be extended to a global one \cite{Ex}. 

\smallskip
So let $(\G,\gamma,X)$ be a continuous group action and $Y$ an open, maybe non-invariant, subset of $X$. For every $a\in\G$, set $Y_{a}:=Y\cap\gamma_a(Y)\subset Y$. It is straightforward to check that
$$
\beta:=\big\{\beta_a:=\gamma_a|_{Y_{a^{-1}}}\!:Y_{a^{-1}}\!\to Y_a\,\big\vert\,a\in\G\big\}
$$ 
becomes a partial action. The groupoid $\Xi[\beta]\equiv\G\ltimes_{[\beta]}\!Y$ may be seen as the non-invariant restriction to $Y$ of the transformation groupoid $\G\ltimes_\gamma\!X$ from Example \ref{starlet}, as described in Subsection \ref{creasta}. For this notice that, if $(a,x)\in\G\!\times\!X$, the conditions $\d(a,x),\r(a,x)\in Y$ mean exactly that $x\in Y_{a^{-1}}$\,. The relationship between the two types of groupoid recurrence sets may be read off from \eqref{curestrictie}, or from \eqref{cuprecista} in the most interesting case, when the two sets are already contained in $Y$. In this last case, one gets
\begin{equation}\label{nickcave}
\big(\G\ltimes_{[\beta]}\!X\big)_S^T=\big(\G\ltimes_\gamma\!X\big)^T_S\,,
\end{equation}
which can also be checked directly. In terms of the actions themselves, using a notation from \eqref{debila}, the relation \eqref{vomitiv} converts into
\begin{equation}\label{curestriczie}
{\rm Rec}_{[\beta]}(S,T)=\big\{a\in\G\,\big\vert\,\exists\,y\in \gamma_{a^{-1}}(Y)\cap S\,,\,\gamma_a(y)\in T\big\}={\rm Rec}_\gamma(S,T)\,,
\end{equation}
since $\gamma_a(y)\in T\subset Y$ already implies that $y\in\gamma_{a^{-1}}(Y)$\,. It is also easy to verify that \eqref{curestriczie} follows from \eqref{debilla}, \eqref{nickcave} and \eqref{debila}.

\subsection{The action groupoid of a groupoid action}\label{castanier}

A continuous groupoid action $(\Xi,\rho,\bu,\Si)$ being given, one constructs \textit{the action (or transformation, or crossed product) groupoid} which, as a set, is the closed subspace $\Xi\!\Join\!\Si$ of $\Xi\!\times\!\Si$ introduced in \eqref{ganchor} and the structure maps are
\begin{equation*}\label{gaucho}
\big(\eta,\xi\bu\si)(\xi,\si):=(\eta\,\xi,\si)\,,\quad(\xi,\si)^{-1}\!:=\big(\xi^{-1}\!,\xi\bu\si\big)\,,
\end{equation*}
\begin{equation}\label{garucho}
{\sf d}(\xi,\si):=(\rho(\si),\si)\equiv\si\,,\quad{\sf r}(\xi,\si):=({\rm r}(\xi),\xi\bu\si)\equiv\xi\bu\si.
\end{equation}
To stress the origin of the construction, we are going to denote by $\Xi\!\ltimes_\th\!\Si$ this groupoid, in analogy with the group case which is a particular example.
The space of units $(\Xi\!\ltimes_\th\!\Si)^{(0)}$ identifies with $\Si$\,. The canonical action of $\Xi\!\ltimes_\th\!\Si$ on $\Si$ (see Example \ref{startlet})
\begin{equation*}\label{tifies}
(\xi,\si)\circ(\rho(\si),\si)=(\xi,\si)(\rho(\si),\si)(\xi,\si)^{-1}=(\r(\xi),\xi\bu\si)=(\rho(\xi\bu\si),\xi\bu\si)
\end{equation*}
may also be written $(\xi,\si)\circ\si=\xi\bu\si$, and thus reproduces in some way the initial action. The invariant subsets, in particular the orbits, are the same. This has a series of obvious consequences on the way dynamical properties are preserved when passing from $(\Xi,\rho,\bu,\Si)$ to $\Xi\!\ltimes_\th\!\Si$\,. 

\smallskip
The connection between the relevant recurrent sets is similar with (and in fact generalizes) that of Example \ref{starlet}: if $M,N\subset\Si$ we get
\begin{equation*}\label{lafel}
(\Xi\!\ltimes_\th\!\Si)^N_M=\{(\xi,\si)\in\Xi\!\ltimes_\th\!\Si\!\mid\!\si\in M\,,\,\xi\bu\si\in N\}\,.
\end{equation*}
Consequently
\begin{equation}\label{consequently}
\widetilde\Xi^N_M=p\big[(\Xi\!\ltimes_\th\!\Si)^N_M\big]\,,
\end{equation}
where $p$ is the restriction of the first projection of the product $\Xi\!\times\!\Si$\,. This restriction is not always injective, being constant on any set of the type $\{\xi\}\!\times\!\rho^{-1}[\d(\xi)]$\,. It follows that \textit{for recurrence phenomena it is not always a good idea to replace the initial action by the action groupoid}.

\subsection{Groupoid pull-backs}\label{cabalier}

There is a powerful method to construct new more sophisticated groupoids from simpler ones, which, however, does not loose control over the orbit structure or the recurrence sets. Let ${\rm d,r}:\Xi\to X$ be the domain and the range maps of a topological groupoid, $\O$ a topological space and $h:\O\to X$ an open continuous surjection. Let
$$
h^{\downarrow\downarrow}(\Xi):=\{(\o,\xi,\o')\in \O\times\Xi\times \O\mid{\rm r}(\xi)=h(\o),{\rm d}(\xi)=h(\o')\}
$$ 
be {\it the associated pull-back groupoid} \cite{CNQ,Mac}. We recall its structural maps:
$$
(\o_1,\xi,\o_2)(\o_2,\eta,\o_3):=(\o_1,\xi\eta,\o_3)\,,\quad(\o,\xi,\o')^{-1}\!:=\big(\o',\xi^{-1}\!,\o\big)\,,
$$
$$
{\mathfrak d}(\o,\xi,\o'):=\o'\,,\quad{\mathfrak r}(\o,\xi,\o'):=\o\,.
$$
Note the relations between orbits and orbit closures in the two groupoids (closures commute with open continuous surjections):
\begin{equation*}\label{relorb}
\mathcal O^{\downarrow\downarrow}_{\o}=h^{-1}\big(\mathcal O_{h(\o)}\big)\,,\quad \overline{\mathcal O^{\downarrow\downarrow}_{\o}}=h^{-1}\big(\overline{\mathcal O_{h(\o)}}\big)\,.
\end{equation*}
For $M,N\subset\O$\,, by inspection one gets
\begin{equation}\label{transfers}
h^{\downarrow\downarrow}(\Xi)^N_M=N\!\times\!\Xi_{h(M)}^{h(N)}\!\times\! M.
\end{equation}

\section{Topological transitivity}\label{calorifiert}

\subsection{The standard notions}\label{motan}

Let us fix a continuous groupoid action $(\Xi,\rho,\th,\Si)$\,. We start with the simplest notions. 
If there is just one orbit, the action is {\it transitive}. This happens for the pair groupoid, for instance. (In \cite[Prop.\,3.18]{EK} it is shown that a compact transitive groupoid is open.) A point having a dense orbit is called {\it a transitive point}. If there is a dense orbit, i.e.\! if a transitive point does exist, one says that the action is {\it pointwise transitive}. 
For the more refined notions, one needs first to prove the next result:

\begin{thm}\label{pricinoasa}
Let us consider the following conditions:
\begin{enumerate}
\item[(i)]
$\Si$ is not the union of two proper invariant closed subsets.
\item[(i')]
Any two open non-void invariant subsets of $\,\Si$ have non-trivial intersection.
\item[(ii)]
Each non-empty open invariant subset of $\,\Si$ is dense.
\item[(iii)]
For every $\,U,V\subset\Si$ open and non-void,  $\widetilde\Xi_U^V\ne\emptyset$ holds ({\rm recurrent transitivity}).
\item[(iv)]
Each invariant subset of $\,\Si$ is ether dense, or nowhere dense ({\rm topological transitivity}).
\end{enumerate}
Then the following implications hold:
\begin{enumerate}
\item
Transitivity $\,\Rightarrow$ pointwise transitivity $\,\Rightarrow$ recurrent transitivity (iii). 
\item
One has $(iv)\Rightarrow (iii)\Rightarrow (ii)\Rightarrow (i')\Leftrightarrow (i)$\,. None of the other implications holds in general. Pointwise transitivity does not imply topological transitivity.
\item
If the groupoid $\Xi$ is open, the conditions $(i)$ to $(iv)$ are all equivalent. 
\end{enumerate}
\end{thm}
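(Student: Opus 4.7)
The plan is to prove the three parts in order, using Lemma \ref{triviallem} to rewrite recurrent transitivity in the equivalent form ``${\sf Sat}(U)\cap V\ne\emptyset$ for every pair of non-void open $U,V$'', and invoking Proposition \ref{intclo} whenever the openness hypothesis is used. The whole statement then reduces to tracking how saturations and closures of invariant sets behave.

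For part 1, transitivity means $\mathfrak O_\si=\Si$ for every $\si$, so every point is transitive. If $\si_0$ is a transitive point and $U,V\subset\Si$ are non-void open, density of $\mathfrak O_{\si_0}$ yields $\xi_1,\xi_2\in\Xi_{\rho(\si_0)}$ with $\xi_1\bu\si_0\in U$ and $\xi_2\bu\si_0\in V$; then $\xi_2\xi_1^{-1}$ is composable, because $\d(\xi_2)=\rho(\si_0)=\d(\xi_1)=\r(\xi_1^{-1})$, and sends $\xi_1\bu\si_0\in U$ to $\xi_2\bu\si_0\in V$, so it lies in $\widetilde\Xi_U^V$.

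For part 2: $(i)\Leftrightarrow(i')$ is complementation. $(ii)\Rightarrow(i')$ holds because two disjoint open sets cannot both be dense. $(iii)\Rightarrow(ii)$: given non-void open invariant $U$ and any non-void open $V$, $(iii)$ yields $\xi,\si$ with $\si\in U$ and $\xi\bu\si\in V$, and invariance of $U$ forces $\xi\bu\si\in U\cap V$. $(iv)\Rightarrow(iii)$: the set ${\sf Sat}(U)$ is invariant and contains the non-void open $U$, hence is not nowhere dense, so $(iv)$ forces density, whence ${\sf Sat}(U)\cap V\ne\emptyset$, and Lemma \ref{triviallem} concludes. The non-reversibility in general is left to the announced counterexamples.

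For part 3, assume $\Xi$ open, so by Proposition \ref{intclo} closures of invariant sets are invariant and, by Remark \ref{caofi}, saturations of open sets are open. The converses proceed as follows. For $(i)\Rightarrow(ii)$: if the open invariant $U$ is not dense, both $\overline U$ and $\Si\setminus U$ are proper closed invariant sets whose union is $\Si$, contradicting $(i)$; here $\overline U$ invariant uses Proposition \ref{intclo} and $\Si\setminus U$ invariant is the elementary observation that $\xi\bu\si\in U$ applied to $\xi^{-1}$ returns $\si$ to $U$. For $(ii)\Rightarrow(iii)$: ${\sf Sat}(U)$ is open invariant and non-empty, hence dense by $(ii)$, hence meets $V$. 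For $(iii)\Rightarrow(iv)$: if $M$ is invariant and not nowhere dense, set $U:=(\overline M)^\circ\ne\emptyset$; for any non-void open $V$, $(iii)$ supplies $\xi,\si$ with $\si\in U\subset\overline M$ and $\xi\bu\si\in V$, and invariance of $\overline M$ places $\xi\bu\si\in\overline M\cap V$, so $\overline M$ is dense and thus equals $\Si$, proving $M$ dense. The main obstacle is precisely at $(ii)\Rightarrow(iii)$ and $(iii)\Rightarrow(iv)$: both rely on the conversion of an open or invariant set into something of the dual topological type, and this is exactly where openness of $\Xi$ becomes indispensable — explaining why the implications collapse in general.
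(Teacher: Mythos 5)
Your proposal is correct and follows essentially the same route as the paper: parts 1 and 2 are argued identically (the $\xi_2\xi_1^{-1}$ trick for pointwise transitivity, the saturation ${\sf Sat}(U)$ together with Lemma \ref{triviallem} for $(iv)\Rightarrow(iii)$), and part 3 rests on Proposition \ref{intclo} exactly as in the text. The only difference is that the paper closes the cycle in part 3 with the single implication $(i')\Rightarrow(iv)$, exhibiting the two disjoint non-void open invariant sets $\big(\overline{A}\,\big)^\circ$ and $\big(\overline{A}\,\big)^c$ for an invariant $A$ that is neither dense nor nowhere dense, whereas you reverse each arrow separately; both arguments are valid.
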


\begin{proof}
1. The first implication is trivial (and obviously it is not an equivalence). We verify now the second one.  Assume that $\Si$ has a dense orbit $\mathfrak O_\si$ and let $\emptyset\ne V_1,V_2$ be open sets. One has 
$$
\xi_1\bu\si\in\mathfrak O_\si\cap V_1\ne\emptyset\quad {\rm and}\quad \xi_2\bu\si\in\mathfrak O_\si\cap V_2\ne\emptyset
$$ 
for elements $\xi_1,\xi_2\in\Xi$ with $\r\big(\xi_1^{-1}\big)=\d(\xi_1)=\rho(\si)=\d(\xi_2)$\,. Since $\big(\xi_2\,\xi_1^{-1}\big)(\xi_1\bu\si)=\xi_2\bu\si$ we infer that $\xi_2\,\xi_1^{-1}\!\in\widetilde\Xi_{V_1}^{V_2}\ne\emptyset$\,.

\smallskip
2. $(i)\Leftrightarrow(i')$ If (i) fails, i.\,e. $\Sigma=C\cup D$ with $C$ and $D$ proper closed invariant subsets, then $C^c\cap D^c=\emptyset$\,. This contradicts $(i')$, since $C^c$ and $D^c$ are open, non-void and invariant. On the other hand, if $A,B$ are open non-empty invariant sets such that $A\cap B=\emptyset$\,, then $A^c\cup B^c=\Si$ with $A^c,B^c$ proper closed invariant subsets, finishing the proof of the equivalence.

\smallskip
$(ii)\Rightarrow(i')$ If each non-void open invariant subset is dense, it meets every other (invariant) non-void open set.

\smallskip
$(iii)\Rightarrow(ii)$ Let $\emptyset\ne U\subset\Si$ open and invariant. By assumption, for every non-void open set $V\subset\Si$ there exists some $\xi\in\Xi$ making $(\xi\bu U)\cap V$ non-void. But $\xi\bu U\subset U$, implying $U\cap V\ne\emptyset$\,. Thus, $U$ meets every other non-void open set and must be dense.

\smallskip
$(iv)\Rightarrow(iii)$ Suppose $(iv)$ holds. Let $\emptyset\ne U,V\subset\Si$ open sets. ${\sf Sat}(U)$ is an invariant set containing $U$, so it cannot be nowhere dense, meaning that it is dense. Hence ${\sf Sat}(U)\cap V\ne\emptyset$ and we conclude by Lemma \ref{triviallem}.

\smallskip
The fact that all the other implications fail without extra assumptions will be showed in a series of counterexamples below.

\smallskip
3. Provided that $\d$ is open, it is enough to prove that $(i')$ implies $(iv)$\,.  So let us assume $(i')$\,, but let $A\subset\Si$ be invariant, neither dense, nor nowhere dense. Then $\big(\overline{A}\,\big)^\circ$ and $(A^c)^\circ=\big(\,\overline A\,\big)^c$ are both non-void open sets, which are invariant by Proposition \ref{intclo}. They should meet, by $(i')$\,, but this is obviously false.
\end{proof}

Let us indicate now the necessary counterexamples for groupoids that are not open.

\begin{ex}\label{valioso}
This example shows that $(ii)\not\Rightarrow(iii),(iv)$ (redundantly but explicitly). Let 
$$
\Si=X\equiv\mathbb R^{\sf x}\!:=(-\infty,0)\cup(0,\infty).
$$ 
Define the sets 
$$
X_1=\{-1\}\cup\big((0,\infty)\setminus\{1\}\big)\quad\textup{ and }\quad X_2=\big((-\infty,0)\setminus\{-1\}\big)\cup \{1\}\,.
$$ 
Forming a partition of $\mathbb R^{\sf x}$, they induce an equivalence relation $\Pi$\,, viewed as a topological groupoid acting on $\mathbb R^{\sf x}$ as in subsection \ref{castandyu}. It is easy to check that $\Pi$ is not open. The orbits are precisely $X_1$ and $X_2$ (neither open nor closed), with closures 
\begin{equation*}\label{sughit}
\overline X_1=\{-1\}\cup (0,\infty)\quad{\rm and}\quad\overline X_2=(-\infty,0)\cup\{1\}\,.
\end{equation*}
These orbit closures are not invariant, showing already that for non-open groupoids the conclusion of Proposition \ref{intclo} does not necessarily hold. (And we see that $\Pi$ is not an open groupoid in an indirect way.)

\smallskip
It is easy to see that $X=X_1\cup X_2$ is the single (non-void) open invariant set, since any invariant set is a union of orbits, and $X_1,X_2$ are not open. Thus the condition $(ii)$ from Theorem \ref{pricinoasa} holds. On the other hand the condition $(iv)$ definitely fails, since the invariant sets $X_1,X_2$ are neither dense, nor nowhere dense. Of course recurrent transitivity also fails. To see this directly, take for instance $U=(2,\infty)$ and $V=(-\infty,-2)$\,, for which 
$$
\widetilde\Xi_U^V=\{(x,y)\in\Pi\mid y\in U,x\in V\}=\emptyset\,,
$$ 
by \eqref{uncazz} or by a simple computation. 
\end{ex}

\begin{ex}\label{valioso2} The previous example can be easily modified to show that $(i')\not\Rightarrow (ii)$. In $\mathbb{R}^{\sf x}$, define the equivalence relation $\Upsilon$ associated to the partition 
$$
Y_1=\{-1\}\cup(0,1)\cup(1,2)\cup(3,\infty)\,,\quad X_2=\big((-\infty,0)\setminus\{-1\}\big)\cup \{1\}\,,\quad\,Y_3=[2,3]
$$ 
and consider the canonical action of $\,\Xi=\Upsilon$ on $\mathbb{R}^{\sf x}$. As invariant sets are union of orbits, the only non-void invariant open sets are $\mathbb{R}^{\sf x}$ and 
$$
Y_1\cup X_2=(-\infty,0)\cup(0,2)\cup(3,\infty)\,,
$$ 
which have non-empty intersection, so $(i')$ holds. But the set $Y_1\cup X_2$ is not dense, hence $(ii)$ fails.
\end{ex}

\begin{ex}\label{valioso3} Define on $\Si=X=\mathbb R$ the equivalence relation 
$$
x\,\Pi\,y\ \Leftrightarrow\ x,y\in\mathbb Q\ \ \textup{or}\ \ x=y\,.
$$ 
This provides a non-open groupoid acting on $\mathbb R$\,, and this action is pointwise transitive ($\mathbb Q$ is a dense orbit), hence recurrently transitive. The invariant set $(0,1)\setminus \mathbb Q$ is neither nowhere dense nor dense, so this action is not topologically transitive. This example shows that {\it pointwise  transitivity} $\not\Rightarrow(iv)$ and (consequently) $(iii)\not\Rightarrow(iv)$.
\end{ex}

\begin{prop}\label{uncorolar}
If $\,\Xi$ is open and $\,\Si$ is a Baire second-countable space, then topological transitivity, recurrent transitivity and pointwise transitivity are equivalent (and also equivalent to the properties $(i)$ and $(ii)$).
\end{prop}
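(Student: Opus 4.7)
The plan is to leverage the equivalences already established. By Theorem \ref{pricinoasa} parts 1 and 3, under the openness assumption on $\Xi$ one already has
$$
\text{pointwise transitivity}\,\Rightarrow\,(iii)\,\Leftrightarrow\,(iv)\,\Leftrightarrow\,(ii)\,\Leftrightarrow\,(i')\,\Leftrightarrow\,(i)\,,
$$
so the whole proposition reduces to closing the loop: I will show that property $(ii)$ implies pointwise transitivity under the extra Baire and second-countable hypotheses on $\Si$.

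For this I would run the standard Baire-category argument, adapted to groupoid saturations. First, fix a countable base $\{V_n\}_{n\in\N}$ of non-empty open subsets of $\Si$\,. For each $n$, the saturation ${\sf Sat}(V_n)=\Xi\bu V_n$ is invariant by Definition \ref{pinera} and contains $V_n$\,, hence non-void; crucially, Proposition \ref{intclo} (this is where the openness of $\Xi$ enters) guarantees that it is also open. Property $(ii)$ then forces each ${\sf Sat}(V_n)$ to be dense in $\Si$\,, and the Baire hypothesis yields that
$$
\E:=\bigcap_{n\in\N}{\sf Sat}(V_n)
$$
is a dense $G_\delta$ subset of $\Si$\,, in particular non-empty.

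The main (and essentially only) step remaining is to verify that any $\si\in\E$ is a transitive point. Given $n$, the containment $\si\in{\sf Sat}(V_n)=\Xi\bu V_n$ yields some $\xi\in\Xi$ and $\tau\in V_n$ with $\d(\xi)=\rho(\tau)$ and $\xi\bu\tau=\si$\,. Then $\r(\xi)=\rho(\si)=\d(\xi^{-1})$\,, so $\xi^{-1}\!\bu\si$ is defined and the action axioms give $\xi^{-1}\!\bu\si=\tau\in V_n\cap\mathfrak O_\si$\,. Thus $\mathfrak O_\si$ meets every basic open set and is therefore dense in $\Si$\,, so the action is pointwise transitive. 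The only bookkeeping to watch is the domain/range compatibility upon inverting $\xi$; unlike the technical issue (a) flagged in the introduction, we are not approximating $\xi$ by a net, so no further subtlety arises, and this is really the only nontrivial step beyond invoking Theorem \ref{pricinoasa}.
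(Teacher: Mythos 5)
Your argument is correct and follows essentially the same route as the paper: reduce via Theorem \ref{pricinoasa} to showing that $(ii)$ implies pointwise transitivity, then run the Baire-category argument on the saturations $\Xi\bu V_n$ of a countable base (open because $\Xi$ is open, dense by $(ii)$), and check that any point of the residual intersection has dense orbit by inverting the witnessing arrow. The only cosmetic difference is that you verify density of the orbit against basic open sets directly, whereas the paper picks an arbitrary open $W$ and locates a basic $V_n\subset W$ first.
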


\begin{proof}
Having in view Theorem \ref{pricinoasa}, what remains is to show that $(ii)$ implies pointwise transitivity. Since $\Sigma$ is second-countable, its topology has a countable basis $\{V_n\ne\emptyset\}_{n\in\mathbb N}$\,. By defining $U_n=\Xi\bu V_n$ we get countably many dense open subsets of a Baire space, so $U=\cap_n U_n$ is also a dense (invariant) set. Let $W\ne\emptyset$ be an open subset of $\Si$\,. By the definition of a basis, there exists some $V_n\subset W$. Hence we have $U\subset U_n=\Xi\bu V_n\subset\Xi\bu W$. Therefore, if $\si\in U$ then $\xi^{-1}\!\bu\si\in W$ for some $\xi\in\Xi$\,.
Hence $\si$ has a dense orbit and the action is pointwise transitive. 
\end{proof}

We recall that Hausdorff locally compact spaces and complete metric spaces are Baire.

\begin{rem}\label{diversiune}
 What we used in the proof of Proposition \ref{uncorolar} is the fact that the intersection $U$ is non-void. Actually, one could improve: {\it under the given requirements, the set of points with dense orbit is a dense $G_\delta$-set.}
\end{rem}

\begin{ex}\label{despregrup}
In the classical dynamical system case of Example \ref{bunul} we recover known results \cite{Au,dV,Gl}. Since the source map $\G\to\{\e\}$ is clearly open, Theorem \ref{pricinoasa} simplifies a lot. Even in this particular case, without second countability the full equivalence from Proposition \ref{uncorolar} fails.
\end{ex}

\begin{ex}\label{ceacpac}
The Deaconu-Renault groupoid is recurrently transitive if and only if for every $\emptyset\ne U,V\subset X$ open sets, there exist $x\in V,\,y\in U,\,k,l\in\N$ such that $\nu^k(x)=\nu^l(y)$\,. 
\end{ex}

\begin{ex}\label{fiarra}
We say that the partial group action $(\G,\beta,Y)$ from Subsection \ref{castanika} is {\it recurrently transitive} if ${\rm Rec}_{[\beta]}(U,V)\ne\emptyset$ for every $U,V\subset Y$ non-void and open. From \eqref{debilla} one deduces that this happens precisely when the groupoid $\G\!\ltimes_{[\beta]}\!Y$ is recurrently transitive.
\end{ex}

\begin{ex}\label{hupatup}
As in Subsection \ref{creasta}, let $(\Xi,\rho,\th,\Si)$ be a groupoid action and let $\big(\underline\Xi,\underline\rho,\underline\th,\underline\Si\big)$ be the action obtained by restriction to the open, maybe non-invariant subset $\underline\Si\subset\Si$\,. {\it If  $\th$ is recurrently transitive, then $\underline\th$ is also recurrently transitive.} This follows from formula \eqref{cuprecista} and from the fact that open subsets of $\underline\Si$ are open in $\Si$\,. It is clear that in general there is no converse to the statement. In particular, $\underline\Si$ might happen to be invariant and recurrently transitive, while the restriction to $\Si\!\setminus\!\underline\Si$ is not constrained in any way. The next proposition outlines a favorable situation.
\end{ex}

\begin{prop}\label{constrained}
Suppose  that $\Xi$ is open and ${\sf Sat}\big(\underline\Si\big)$ is dense in $\Si$\,. Then $\th$ is recurrently transitive if and only if $\underline\th$ is recurrently transitive.
\end{prop}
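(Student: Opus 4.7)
The forward direction was already settled in Example \ref{hupatup}: recurrent transitivity passes to non-invariant open restrictions without any further hypothesis. So the content of the proposition is the converse implication, and the extra assumptions (openness of $\Xi$ and density of ${\sf Sat}\big(\underline\Si\big)$) should be used precisely there. My plan is to show that, given any two open non-void subsets $U,V\subset\Si$\,, one can produce open non-void subsets $\tilde U,\tilde V\subset\underline\Si$ that still encode $U$ and $V$ up to translation by elements of $\Xi$. Once the recurrent transitivity of $\underline\th$ is applied to $\tilde U$ and $\tilde V$\,, composing the resulting element of $\underline\Xi$ with the translating elements in $\Xi$ should yield a non-void $\widetilde\Xi^V_U$\,.

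\textbf{Producing $\tilde U$ inside $\underline\Si$.} Density of ${\sf Sat}\big(\underline\Si\big)$ and non-voidness of the open set $U$ immediately give a point $u\in U\cap{\sf Sat}\big(\underline\Si\big)$\,, hence an element $\eta\in\Xi$ with $\d(\eta)=\rho(u)$ and $\eta\bu u\in\underline\Si$ (take $\eta=\xi^{-1}$ for a representation $u=\xi\bu\tau$ with $\tau\in\underline\Si$). Continuity of $\th$ at the pair $(\eta,u)\in\Xi\!\Join\!\Si$\,, combined with openness of $\underline\Si$\,, allows me to pick open neighborhoods $A\ni\eta$ in $\Xi$ and $O\subset U$ of $u$ in $\Si$ such that $\th\big[(A\!\times\!O)\cap(\Xi\!\Join\!\Si)\big]\subset\underline\Si$\,. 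Then $\tilde U:=A\bu O$ is open in $\Si$ by Remark \ref{caofi} (using that $\Xi$ is open), is non-void because it contains $\eta\bu u$\,, is contained in $\underline\Si$ by construction, and satisfies $\tilde U\subset\Xi\bu U={\sf Sat}(U)$\,. The same recipe, applied to $V$\,, yields an analogous $\tilde V\subset\underline\Si$ with $\tilde V\subset{\sf Sat}(V)$\,.

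\textbf{Closing the argument.} Recurrent transitivity of $\underline\th$ supplies $\underline\xi\in\underline\Xi$ and $\tau\in\tilde U$ with $\underline\xi\bu\tau\in\tilde V$\,. Since $\tilde U\subset{\sf Sat}(U)$ and $\tilde V\subset{\sf Sat}(V)$\,, I can write $\tau=\alpha\bu u_1$ and $\underline\xi\bu\tau=\beta\bu v_1$ for some $u_1\in U$, $v_1\in V$ and $\alpha,\beta\in\Xi$ with the correct domain matchings. A routine check of composability (all the necessary range/source equalities are forced by the above identities) shows that the element $\beta^{-1}\underline\xi\,\alpha\in\Xi$ satisfies $\big(\beta^{-1}\underline\xi\,\alpha\big)\bu u_1=v_1\in V$\,, so $\widetilde\Xi^V_U\ne\emptyset$\,.

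\textbf{Where the difficulty sits.} The only delicate step is the construction of $\tilde U$ (and $\tilde V$): I need an open subset of $\underline\Si$ that is genuinely reachable from $U$ by translation in $\Xi$\,. The fibered nature of $\th$ prevents me from simply using a product neighborhood of $(\eta,u)$; instead I must work inside $\Xi\!\Join\!\Si$ with a continuity argument, and then invoke the openness of $\Xi$ (via Remark \ref{caofi}) to conclude that $A\bu O$ is open in $\Si$\,. Without $\Xi$ being open, $\tilde U$ would not need to be open, and without density of ${\sf Sat}\big(\underline\Si\big)$ there would be no reason for such an $\eta$ to exist in the first place; so both hypotheses are used exactly here.
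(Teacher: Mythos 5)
Your proof is correct, and its skeleton coincides with the paper's: produce non-void open subsets of $\underline\Si$ sitting inside ${\sf Sat}(U)$ and ${\sf Sat}(V)$, apply recurrent transitivity of $\underline\th$ to them, and conjugate the resulting arrow back to an element of $\widetilde\Xi^V_U$. The difference is in execution. Where you build $\tilde U=A\bu O$ by a local continuity argument around a single pair $(\eta,u)\in\th^{-1}\big(\underline\Si\big)$, the paper simply takes the whole set $\underline\Si\cap(\Xi\bu U)$: this is open because $\Xi\bu U={\sf Sat}(U)$ is open (Remark \ref{caofi}, openness of $\Xi$) and $\underline\Si$ is open, and it is non-void because $U\cap{\sf Sat}\big(\underline\Si\big)\ne\emptyset$ is equivalent to $\underline\Si\cap{\sf Sat}(U)\ne\emptyset$ by Lemma \ref{triviallem}. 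Likewise, your final composability computation with $\beta^{-1}\underline\xi\,\alpha$ is precisely the content of Lemma \ref{triviallem} applied to subsets of ${\sf Sat}(U)$ and ${\sf Sat}(V)$; the paper just invokes that lemma a second time. So your argument is a hand-unrolled version of the paper's: it costs you an extra continuity step and an explicit arrow computation, but it has the minor virtue of exhibiting concretely which neighborhoods and which groupoid elements do the work, and of confirming exactly where each hypothesis (openness of $\Xi$ for the openness of $A\bu O$, density of the saturation for the existence of $\eta$) enters.
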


\begin{proof}
Suppose that $\underline\th$ is recurrently transitive and let $U,V$ be two open non-void subsets of $\Si$\,. By density, $U$ and $V$ both intersect the saturation $\Xi\bu\underline\Si$\,. So Lemma \ref{triviallem} tells us that the open set $\underline\Si$\, intersects the open sets $\Xi\bu U$ and $\Xi\bu V$. Use the fact that $\underline\th$ is recurrently transitive and Lemma \ref{triviallem} (again) to conclude that $\widetilde{\Xi}^V_U\ne\emptyset$\,.
\end{proof}

\begin{cor}\label{scurtatura}
Let $(\G,\gamma,X)$ be a global group action and $(\G,\beta,Y)$ the partial action obtained as in Subsection \ref{castanika} by restricting to the open subset $Y$. We assume that $X$ is the smallest invariant set containing $Y$. Then one of the actions is recurrently transitive if and only if the other is so.
\end{cor}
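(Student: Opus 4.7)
My plan is to deduce this as a direct specialisation of Proposition \ref{constrained} to the transformation-groupoid encoding of (partial) group actions developed in Subsection \ref{castanika}.

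First I would set up the dictionary. The global action $(\G,\gamma,X)$ is encoded by the open transformation groupoid $\Xi:=\G\!\ltimes_\gamma\!X$ acting canonically on its unit space $\Si:=X$ as in Example \ref{starlet}; the openness of $\Xi$ is standard (one can check directly, or invoke the fact that $\Xi$ is \'etale). The partial action $(\G,\beta,Y)$ is, by construction in Subsection \ref{castanika}, exactly the non-invariant restriction of this groupoid action to the open set $\underline\Si:=Y$, with corresponding restricted groupoid $\underline\Xi=\Xi_Y^Y=\G\!\ltimes_{[\beta]}\!Y$. Recurrent transitivity of the global action $(\G,\gamma,X)$ is exactly the condition ${\rm Rec}_\gamma(U,V)\ne\emptyset$ for all non-empty open $U,V\subset X$, which by the surjection \eqref{debila} is equivalent to $\widetilde\Xi_U^V\ne\emptyset$, i.e.\ to recurrent transitivity of $\th$. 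Similarly, by Example \ref{fiarra} (via the surjection \eqref{debilla}), recurrent transitivity of the partial action $(\G,\beta,Y)$ is equivalent to recurrent transitivity of $\underline\th$.

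Next I would verify the hypotheses of Proposition \ref{constrained}. The groupoid $\Xi$ is open, as noted. The assumption of the corollary is that $X$ is the smallest invariant set containing $Y$, which is nothing but ${\sf Sat}\big(\underline\Si\big)=\Si$; in particular ${\sf Sat}\big(\underline\Si\big)$ is (trivially) dense in $\Si$.

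Finally, applying Proposition \ref{constrained} gives that $\th$ is recurrently transitive if and only if $\underline\th$ is, and via the dictionary above this is precisely the statement that $(\G,\gamma,X)$ is recurrently transitive if and only if $(\G,\beta,Y)$ is. There is no real obstacle here: the proof is essentially bookkeeping, and the only thing to be careful about is making the two translations between classical and groupoid recurrent transitivity explicit in the correct direction.
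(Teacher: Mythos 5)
Your proposal is correct and follows exactly the route the paper intends: Corollary \ref{scurtatura} is stated without proof precisely because it is the specialisation of Proposition \ref{constrained} to the encoding of Subsection \ref{castanika}, with the translations via the surjections \eqref{debila} and \eqref{debilla} (or Example \ref{fiarra}) handling the passage between classical and groupoid recurrent transitivity. Your verification of the hypotheses (openness of $\G\!\ltimes_\gamma\!X$ via the second projection, and ${\sf Sat}(Y)=X$ being trivially dense) is exactly the intended bookkeeping.
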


In \cite[Prop.\,5.5]{Ex1} it is shown that any partial action $(\G,\beta,Y)$ may be obtained from a global one, with the extra condition that the total space $X$ is the saturation of the initial one, in an essentially unique way. In general $X$ could fail to be Hausdorff, cf. \cite[Prop.\,5.6]{Ex1}. The results are attributed to F. Abadie \cite{Ab}.

\begin{ex}\label{despreswitch}
In the setting of subsection \ref{castanier},  {\it the groupoid action $(\Xi,\rho,\bu,\Si)$ is recurrently transitive if and only if the action groupoid $\Xi\!\ltimes_\th\!\Si$ is so} ($p$ is surjective, hence $p(B)$ is void if and only if $B$ is void). 
\end{ex}

\begin{ex}\label{desprepulbec}
Although the pullback groupoid of subsection \ref{cabalier} might be much more complicated than the initial one, $h^{\downarrow\downarrow}(\Xi)$ and $\Xi$ are simultaneously recurrently transitive. This follows from \eqref{transfers}.
\end{ex}

\subsection{Weak pointwise transitivity}\label{pisica}

\begin{defn}
Define {\it the invariant closure of} $A\subset\Si$ by 
$$
\mathfrak C(A):=\!\!\bigcap\limits_{\substack{A\subset M \\ M\textup{ closed, invariant}}}\!\!\!M\,.
$$ 
This is, the smallest closed and invariant set including $A$\,. One very special case (deserving its own notation) is $\mathfrak C_\si=\mathfrak C(\{\si\})$\,, {\it the invariant orbit closure of $\si$}.
\end{defn}

\begin{prop} The invariant closure verifies 
$$
\mathfrak C(A)=\mathfrak C({\sf Sat}(A))=\mathfrak C\big(\,\overline{{\sf Sat}(A)}\,\big)\,.
$$ 
In particular 
$$
\mathfrak C_\si=\mathfrak C(\mathfrak O_\si)=\mathfrak C(\overline{\mathfrak O}_\si)\,\supset\overline{\mathfrak O}_\si\,.
$$ 
If $\,\Xi$ is open, $\mathfrak C(A)=\overline{{\sf Sat}(A)}$ and $\mathfrak C_\si=\overline{\mathfrak O}_\si$\,.
\end{prop}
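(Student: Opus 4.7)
The plan is to exploit the defining property of $\mathfrak C$ as an intersection of closed invariant sets, together with the fact that ${\sf Sat}(A)$ is already the smallest invariant set containing $A$. The whole statement is essentially bookkeeping, and the only place where the openness hypothesis enters is via Proposition~\ref{intclo}.

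First, I will establish the string of equalities by showing each of the three sets is included in the other. For $\mathfrak C(A)\subset \mathfrak C({\sf Sat}(A))$, note the obvious monotonicity: if $A\subset B$ then every closed invariant set containing $B$ also contains $A$, so the intersection defining $\mathfrak C(B)$ is taken over a smaller family, giving $\mathfrak C(A)\subset\mathfrak C(B)$. Hence $\mathfrak C(A)\subset \mathfrak C({\sf Sat}(A))\subset\mathfrak C\big(\overline{{\sf Sat}(A)}\,\big)$. For the reverse, observe that $\mathfrak C(A)$ is, by definition, a closed invariant set containing $A$; since ${\sf Sat}(A)$ is the smallest invariant set containing $A$ (Definition~\ref{pinera}), we have ${\sf Sat}(A)\subset\mathfrak C(A)$, and closedness of $\mathfrak C(A)$ then yields $\overline{{\sf Sat}(A)}\subset\mathfrak C(A)$. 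So $\mathfrak C(A)$ is in particular a closed invariant set containing $\overline{{\sf Sat}(A)}$, giving $\mathfrak C\big(\overline{{\sf Sat}(A)}\,\big)\subset\mathfrak C(A)$.

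The specialization to $\si$ is just the case $A=\{\si\}$, using ${\sf Sat}(\{\si\})=\mathfrak O_\si$. The inclusion $\mathfrak C_\si\supset\overline{\mathfrak O}_\si$ then follows from $\mathfrak O_\si\subset\mathfrak C_\si$ and the fact that $\mathfrak C_\si$ is closed.

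For the last assertion, assume $\Xi$ is open. By Proposition~\ref{intclo}, the closure of an invariant set is invariant, so $\overline{{\sf Sat}(A)}$ is both closed and invariant, and it contains $A$. It is therefore one of the sets appearing in the intersection defining $\mathfrak C(A)$, giving $\mathfrak C(A)\subset\overline{{\sf Sat}(A)}$. The reverse inclusion was already shown above (it does not require openness). Hence $\mathfrak C(A)=\overline{{\sf Sat}(A)}$, and in particular $\mathfrak C_\si=\overline{\mathfrak O}_\si$.

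There is no real obstacle here: the argument is purely a chase through the definitions, and the only subtlety is remembering to invoke Proposition~\ref{intclo} at the final step to upgrade ``closed invariant set containing ${\sf Sat}(A)$'' to ``contains $\overline{{\sf Sat}(A)}$ and is itself of this form''.
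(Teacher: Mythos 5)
Your proof is correct and follows essentially the same route as the paper's: monotonicity of $\mathfrak C$ gives the forward chain of inclusions, the fact that $\mathfrak C(A)$ is itself closed and invariant (hence contains $\overline{{\sf Sat}(A)}$) gives the reverse, and Proposition~\ref{intclo} is invoked exactly where the paper invokes it for the open case. The only difference is that you spell out the intermediate step ${\sf Sat}(A)\subset\mathfrak C(A)$ explicitly, which the paper leaves implicit.
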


\begin{proof} 
Since $A\subset{\sf Sat}(A)\subset\overline{{\sf Sat}(A)}$\,, one readily gets 
$$
\mathfrak C(A)\subset\mathfrak C({\sf Sat}(A))\subset\mathfrak C\big(\,\overline{{\sf Sat}(A)}\,\big)\,.
$$
The set $\mathfrak C(A)$ is closed and invariant, so it contains $\overline{{\sf Sat}(A)}$\,, implying that $\mathfrak C\big(\,\overline{{\sf Sat}(A)}\,\big)\subset\mathfrak C(A)$\,. 

\smallskip
If $\Xi$ is open, then by Proposition \ref{intclo}, $\overline{{\sf Sat}(A)}$ is a closed and invariant set containing $A$\,, so $\mathfrak C(A)=\overline{{\sf Sat}(A)}$\,. Then use the fact that $\mathfrak O_\si={\sf Sat}(\{\si\})$\,.
\end{proof}

It is not difficult to see that $\mathfrak C(A)$ contains the sets 
$$
\overline{{\sf Sat}(A)}\,,\overline{{\sf Sat}(\overline{{\sf Sat}(A)})}\,, \overline{{\sf Sat}(\overline{{\sf Sat}(\overline{{\sf Sat}(A)})})}\,,\ldots
$$ 
Whenever the groupoid is open, this ascending chain of sets collapses in the first step. The relevance of the set $\mathfrak C(A)$ in groupoids that are not open is a new phenomenon. 
 
\begin{defn}\label{wpt} An action $(\Xi,\rho,\theta,\Si)$ is called {\it weakly pointwise transitive} $(wpt)$ if exists a point $\si\in\Si$ such that $\mathfrak C_\si\!=\Si$\,. In this case, we say that $\si$ is a {\it weakly transitive point}.
\end{defn}

\begin{rem}
Since $\overline{\mathfrak O}_\si\subset\mathfrak C_\si$\,, it is clear that {\it pointwise transitive} $\Rightarrow$ {\it weakly  pointwise transitive}. When $\Xi$ is an open groupoid, {\it weakly pointwise transitive} $\Leftrightarrow$ {\it pointwise transitive}.
\end{rem}

\begin{ex}
We compute the invariant closures in the previous examples: 
\begin{itemize}
 \item In example $\ref{valioso}$, one has $\mathfrak C_\si=\R^{\sf x}$ for all $\si\in \Si=\R^{\sf x}$. So  {\it every point is weakly transitive}. We recall that this example is not topologically transitive or recurrently transitive.
\item In example $\ref{valioso2}$, 
$$
\mathfrak C_\si=\left\{\begin{array}{ll}
[2,3]     & \textup{if\ } \si\in[2,3]\,, \\
\,\,\,\R^{\sf x}      & \textup{if\ } \si\in\R^{\sf x}\setminus[2,3]\,. \\
\end{array}\right.
$$ 
\item In example $\ref{valioso3}$, 
$$
\mathfrak C_\si=\left\{\begin{array}{ll}
\,\,\R     & \textup{if\ } \si\in\mathbb Q\,, \\
\{\si\}      & \textup{if\ } \si\not\in\mathbb Q\,. \\
\end{array}\right.
$$
\end{itemize} 

All of these are {\it weakly pointwise transitive} systems. This examples also show that the sets $\{\mathfrak C_\si\!\mid \!\si\in\Si\,\}$ doesn't need to be disjoint: some could intersect or even (strictly) contain others. Of course, this often happens for orbit closures, but for invariant closures the overlaps tend to be larger.
\end{ex}

\begin{prop}
{\it Weak pointwise transitivity} implies the condition $(i)$ of Theorem \ref{pricinoasa}.
\end{prop}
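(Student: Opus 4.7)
The plan is to argue by contradiction using the defining minimality property of the invariant closure $\mathfrak{C}_\si$. Suppose the action is weakly pointwise transitive, so there exists a weakly transitive point $\si\in\Si$ with $\mathfrak{C}_\si=\Si$. Assume toward a contradiction that condition $(i)$ fails, that is, $\Si=C\cup D$ with $C$ and $D$ proper closed invariant subsets.

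Since $\si\in\Si=C\cup D$, the point $\si$ belongs to at least one of these two sets; without loss of generality, take $\si\in C$. Because $C$ is closed and invariant and contains $\si$, it is one of the sets appearing in the intersection defining $\mathfrak{C}_\si$, so $\mathfrak{C}_\si\subset C$. Combined with $\mathfrak{C}_\si=\Si$ this yields $\Si\subset C$, contradicting the fact that $C$ is proper.

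There is no real obstacle here: the argument uses only the definition of $\mathfrak{C}_\si$ as the smallest closed invariant set containing $\si$, and the observation that at least one of the two covering sets must contain the weakly transitive point. No openness of $\Xi$ or additional structural hypothesis is needed, which fits the pattern that $(i)$ is the weakest of the transitivity-type conditions.
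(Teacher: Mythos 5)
Your argument is correct and is essentially identical to the paper's proof: both observe that the weakly transitive point $\si$ lies in one of the two closed invariant sets covering $\Si$, and that the minimality of $\mathfrak C_\si$ then forces that set to equal $\mathfrak C_\si=\Si$, so it cannot be proper. The only difference is cosmetic (you phrase it as a contradiction, the paper as a direct verification of $(i)$).
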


\begin{proof}
Suppose that $\Si=N_1\cup N_2$\,, being $N_1,N_2$ closed and invariant sets. Without loss of generality, there exists $\si\in N_1$ such that $\mathfrak C_\si\!=\Si$\,. As $N_1$ is closed, invariant and contains $\si$, the relation $\Si=\mathfrak C_\si\!\subset N_1$ follows. We conclude that $(i)$ holds.
\end{proof}

\begin{rem}
Example \ref{valioso2} shows that {\it weakly pointwise transitivity does not imply the condition $(ii)$} of Theorem \ref{pricinoasa}, in general. We recall that pointwise transitivity does imply $(iii)$\,, which is stronger than $(ii)$, so none of these properties is implied by weak topological transitivity. We summarize most of the implications in the following diagram, in which the arrows indicate implications:
\begin{equation*}\label{coliflor}
\begin{diagram}
\node{(iv)}\arrow{e}\node{(iii)}\arrow{e}\arrow{e}\node{(ii)}\arrow{e}\node{(i)}\arrow{e,<>}\node{(i')}\\ 
\node{(t)}\arrow{n}\arrow{e}\node{(\rm pt)}\arrow{n}\arrow[2]{e}\node{}\node{(\rm wpt)}\arrow{n}
\end{diagram}
\end{equation*}
\end{rem}

\section{Recurrence of points}\label{cracifloc}

\subsection{Limit sets}\label{craciflor}

\textit{A continuous action $(\Xi,\rho,\th,\Si)$ will be fixed, with $\Xi$ strongly non-compact}. We recall from the Introduction that this means that none of the $\d$-fibers of the groupoid is compact. Of course, the topological spaces $\Si$ and $X=\Xi^{(0)}$ are allowed (but not required) to be compact. Note that closed equivalence relations on compact spaces $X$, with the induced topology, are excluded.
Let $\mathcal K(T)$ denote the family of compact subsets of the topological space $T$. 

\begin{defn}\label{pescuit}
\textit{The limit set of the point} $\si\in\Si$ is the closed subset of $\Si$
\begin{equation*}\label{nicetry}
\mathfrak L_\si^\th\equiv\mathfrak L_\si\!:=\!\bigcap_{{\sf K}\in\mathcal K(\Xi)}\!\overline{(\Xi\!\setminus\!{\sf K})\bu\si}\,.
\end{equation*}
\end{defn}

Note that 
$$
(\Xi\!\setminus\!{\sf K})\bu\si=\big[(\Xi\!\setminus\!{\sf K})\cap\Xi_{\rho(\si)}\big]\bu\si=\big[\Xi_{\rho(\si)}\!\setminus\!{\sf K}_{\rho(\si)}\big]\bu\si\,,
$$ 
where ${\sf K}_{\rho(\si)}:={\sf K}\cap\Xi_{\rho(\si)}$ is compact in $\Xi_{\rho(\si)}$\,. It follows that one can also write 
$$
\mathfrak L_\si=\!\bigcap_{{\sf k}\in\mathcal K(\Xi_{\rho(\si)})}\!\overline{\big(\Xi_{\rho(\si)}\!\setminus{\sf k}\big)\bu\si}\,.
$$
The limit set would be void if $\Xi_{\rho(\si)}$ was allowed to be compact (which is not), but it can also be void in other situations.

\smallskip
For any unit $x$ we say that the net $(\xi_i)_{i\in I}\subset\Xi_x$ \textit{diverges} if for every compact ${\sf k}\subset\Xi_x$ there exists $i_{\sf k}\in I$ such that $\xi_i\notin{\sf k}$ if $i\ge i_{\sf k}$\,. The existence of divergent nets relies on our strongly non-compact assumption.

\begin{lem}\label{labil}
The following statements for $\si,\tau\in\Si$\, are equivalent:
\begin{enumerate}
\item[(i)]
$\tau$ belongs to the limit set $\mathfrak L_\si$\,.
\item[(ii)]
For every neighborhood $V$ of $\tau$ there exists a divergent net $(\xi_i)_{i\in I}$ in $\Xi_{\rho(\si)}$ such that $\xi_i\bu\si\in V$ for any $i\in I$.
\item[(ii')]
For every neighborhood $V$ of $\tau$, the recurrence set $\widetilde\Xi_\si^V$ is not relatively compact.
\item[(iii)]
There is a divergent net $(\eta_i)_{i\in I}$ in $\Xi_{\rho(\si)}$ such that $\eta_i\bu\si\to\tau$. 
\end{enumerate}
\end{lem}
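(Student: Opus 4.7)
The plan is to prove the cycle $(i)\Leftrightarrow(ii')\Leftrightarrow(ii)\Rightarrow(iii)\Rightarrow(ii)$, relying on the observation that in a Hausdorff space a subset is relatively compact iff it is contained in some compact set, so that $\widetilde\Xi_\si^V\subset\Xi_{\rho(\si)}$ being non-relatively compact means: for every compact ${\sf k}\subset\Xi_{\rho(\si)}$ one has $\widetilde\Xi_\si^V\!\setminus{\sf k}\ne\emptyset$\,. Throughout I would work inside the fibre $\Xi_{\rho(\si)}$\,, using the rewriting $(\Xi\!\setminus\!{\sf K})\bu\si=(\Xi_{\rho(\si)}\!\setminus\!{\sf K}_{\rho(\si)})\bu\si$ already noted after the definition.

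For $(i)\Leftrightarrow(ii')$ I would simply unwind the definitions. $\tau\in\mathfrak L_\si$ means that for every compact ${\sf k}\subset\Xi_{\rho(\si)}$ and every neighborhood $V$ of $\tau$\,, the set $V\cap\big[(\Xi_{\rho(\si)}\!\setminus\!{\sf k})\bu\si\big]$ is non-empty, which says exactly that $\widetilde\Xi_\si^V\!\setminus{\sf k}\ne\emptyset$ for every such ${\sf k}$\,, i.\,e.\! that $\widetilde\Xi_\si^V$ is not relatively compact. For $(ii')\Rightarrow(ii)$, fix $V$; for each compact ${\sf k}\in\mathcal K(\Xi_{\rho(\si)})$ pick $\xi_{\sf k}\in\widetilde\Xi_\si^V\!\setminus{\sf k}$\,. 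Ordering $\mathcal K(\Xi_{\rho(\si)})$ by inclusion yields a net $(\xi_{\sf k})$ in $\Xi_{\rho(\si)}$ which is divergent (given ${\sf k}_0$\,, for ${\sf k}\supset{\sf k}_0$ one has $\xi_{\sf k}\notin{\sf k}\supset{\sf k}_0$) and satisfies $\xi_{\sf k}\bu\si\in V$. The reverse $(ii)\Rightarrow(ii')$ is immediate, since a divergent net in $\widetilde\Xi_\si^V$ cannot lie inside any compact set.

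For $(ii)\Rightarrow(iii)$, the standard move is to diagonalize over neighborhoods of $\tau$ and compacts simultaneously. Let $\mathcal V_\tau$ denote the neighborhood filter of $\tau$ and order the directed set $\Lambda:=\mathcal V_\tau\!\times\!\mathcal K(\Xi_{\rho(\si)})$ by $(V,{\sf k})\preceq(V'\!,{\sf k}')\Leftrightarrow V'\subset V$ and ${\sf k}\subset{\sf k}'$. For each $(V,{\sf k})\in\Lambda$\,, use $(ii)$ applied to the neighborhood $V$ to extract an element $\eta_{(V,{\sf k})}\in\Xi_{\rho(\si)}\!\setminus{\sf k}$ with $\eta_{(V,{\sf k})}\bu\si\in V$ (such an element exists because the $(ii)$-net indexed by $I$ eventually escapes ${\sf k}$ while staying inside $V$). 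The resulting net $(\eta_{(V,{\sf k})})_{(V,{\sf k})\in\Lambda}$ is divergent by the monotonicity in ${\sf k}$ and satisfies $\eta_{(V,{\sf k})}\bu\si\to\tau$ by the monotonicity in $V$. Finally, $(iii)\Rightarrow(ii)$ is a tail argument: given a divergent net $(\eta_i)$ with $\eta_i\bu\si\to\tau$ and a neighborhood $V$ of $\tau$\,, pick $i_V$ so that $\eta_i\bu\si\in V$ for all $i\ge i_V$; the tail of $(\eta_i)$ starting at $i_V$ is still a divergent net in $\Xi_{\rho(\si)}$ sitting inside $\widetilde\Xi_\si^V$.

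The only mildly non-routine step is the diagonalization in $(ii)\Rightarrow(iii)$\,, which is where the two separate demands on the net (being divergent and being sent into each prescribed neighborhood of $\tau$) are merged; this is standard for filter-based convergence arguments and should cause no real difficulty. All other implications are essentially unravelings of the definition of $\mathfrak L_\si$ together with the characterization of relative compactness recalled at the start.
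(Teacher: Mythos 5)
Your proposal is correct and follows essentially the same route as the paper: the unwinding of the definition of $\mathfrak L_\si$ into non-relative-compactness of the sets $\widetilde\Xi_\si^V$, the choice of one element outside each compact ${\sf k}$ to produce a divergent net, and the diagonalization over $\mathcal V_\tau\!\times\!\mathcal K\big(\Xi_{\rho(\si)}\big)$ for $(ii)\Rightarrow(iii)$ all appear in the paper's proof, which merely closes the cycle via $(iii)\Rightarrow(i)$ instead of your $(iii)\Rightarrow(ii)$. The only presentational difference is that you make the $(i)\Leftrightarrow(ii')$ step (which the paper dismisses as "follows from the definitions") explicit and use $(ii')$ as the pivot, which changes nothing of substance.
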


\begin{proof}
$(i)\Rightarrow(ii)$ Let $V$ be a neighborhood of $\tau$. If $\tau$ belongs to the limit set $\mathfrak L_\si$\,, then $\tau\in \overline{\big(\Xi_{\rho(\si)}\!\setminus{\sf k}\big)\bu\si}$ for every ${\sf k}\in\mathcal K(\Xi_{\rho(\si)})$\,. Hence we can choose elements $\xi_{\sf k}\in\Xi_{\rho(\si)}\!\setminus{\sf k}$ such that $\xi_{\sf k}\bu\si\in V$. The net $(\xi_{\sf k})_{{\sf k}\in\mathcal K(\Xi_{\rho(\si)})}$ is obviously divergent.

\smallskip
$(ii)\Rightarrow(iii)$ Consider the set $\mathfrak N_\tau$ of neighborhoods of $\tau$,  and order it by reversing the inclusions. For each neighborhood $V$ of $\tau$, select some divergent net $(\xi_{i,V})_{i\in I}$ such that $\xi_{i,V}\bu\si\in V$ (it can be built over the same labels). Observe that, for each ${\sf k}\in\mathcal K(\Xi_{\rho(\si)})$\,, there exists $i_{\sf k}^V$ such that $\xi_{i,V}\not\in{\sf k}$ for every $i\geq i_{\sf k}^V$. Define $\eta_{{\sf k},V}=\xi_{i_{\sf k}^V\!,V}$\,, which forms a net when $\mathfrak N_\tau\!\times\!\mathcal K(\Xi_{\rho(\si)})$ is given the product order. By construction, we get a divergent net and $\tau=\underset{{\sf k},V}{\lim}\,\eta_{{\sf k},V}\bu\si$.

\smallskip
$(iii)\Rightarrow(i)$ Let ${\sf k}\in\mathcal K\big(\Xi_{\rho(\si)}\big)$. Since $\eta_i$ is divergent, there exists $i_{\sf k}\in I$ such that $\eta_i\in\Xi_{\rho(\si)}\!\setminus{\sf k} $\,, $\forall\,i\geq i_{\sf k}$\,. So we have that 
$$
\tau=\lim_{i\in I} \eta_i\bu\si=\lim_{i\geq i_{\sf k}}\eta_i\bu\si\in \overline{\big(\Xi_{\rho(\si)}\!\setminus{\sf k}\big)\bu\si}\,.
$$ 
It follows that $\tau\in \mathfrak L_\si$\,.

\smallskip
$(ii)\Leftrightarrow(ii')$ follows from the definitions.
\end{proof}

The next easy lemma is sometimes useful to compute limit sets.

\begin{lem}\label{technicallemma}
Suppose that there exists a family of compact sets $\{{\sf k}_\lambda\}_{\lambda\in\Lambda}$ that exhausts $\Xi_{\rho(\si)}$\,. That is, $\Lambda$ is a directed set, $\Xi_{\rho(\si)}=\bigcup_{\lambda\in\Lambda}{\sf k}_\lambda$ and  ${\sf k}_{\lambda_1}\subset{\sf k}_{\lambda_2}^\circ$ whenever $\lambda_1\leq\lambda_2$\,. Then the limit set $\mathfrak L_\si$ can be computed as 
$$
\mathfrak L_\si=\bigcap_{\lambda\in\Lambda}\overline{(\Xi_{\rho(\si)}\!\setminus{\sf k}_\lambda)\bu\si}\,.
$$
\end{lem}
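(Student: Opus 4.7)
The plan is to prove equality of the two intersections by establishing both inclusions, with the nontrivial direction being a cofinality argument: every compact subset of $\Xi_{\rho(\si)}$ is eventually contained in some ${\sf k}_\lambda$.

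First, the easy inclusion. Since each ${\sf k}_\lambda$ is an element of $\mathcal K(\Xi_{\rho(\si)})$, the intersection defining $\mathfrak L_\si$ ranges over a larger family than $\{{\sf k}_\lambda\}_{\lambda\in\Lambda}$, so
$$\mathfrak L_\si=\bigcap_{{\sf k}\in\mathcal K(\Xi_{\rho(\si)})}\overline{(\Xi_{\rho(\si)}\setminus{\sf k})\bu\si}\,\subset\,\bigcap_{\lambda\in\Lambda}\overline{(\Xi_{\rho(\si)}\setminus{\sf k}_\lambda)\bu\si}\,.$$

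For the reverse inclusion, the key step is to show that for every compact ${\sf k}\subset\Xi_{\rho(\si)}$ there exists $\lambda\in\Lambda$ with ${\sf k}\subset{\sf k}_\lambda$. I would argue as follows: pick any $\xi\in{\sf k}$. By hypothesis, $\xi\in{\sf k}_{\mu(\xi)}$ for some $\mu(\xi)\in\Lambda$, and choosing any $\nu(\xi)\ge\mu(\xi)$ strictly above (which is possible since otherwise ${\sf k}_{\mu(\xi)}\subset{\sf k}_{\mu(\xi)}^\circ$ trivially and we still get $\xi\in{\sf k}_{\nu(\xi)}^\circ$), we obtain $\xi\in{\sf k}_{\nu(\xi)}^\circ$. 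The interiors $\big\{{\sf k}_{\nu(\xi)}^\circ\big\}_{\xi\in{\sf k}}$ form an open cover of ${\sf k}$; by compactness extract a finite subcover ${\sf k}_{\nu_1}^\circ,\ldots,{\sf k}_{\nu_n}^\circ$, and use directedness of $\Lambda$ to find $\lambda\in\Lambda$ with $\lambda\ge\nu_j$ for $j=1,\ldots,n$. Then ${\sf k}_{\nu_j}\subset{\sf k}_\lambda^\circ\subset{\sf k}_\lambda$ for each $j$, hence ${\sf k}\subset\bigcup_j{\sf k}_{\nu_j}^\circ\subset{\sf k}_\lambda$.

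With this cofinality established, the reverse inclusion is immediate: for every ${\sf k}\in\mathcal K(\Xi_{\rho(\si)})$ pick such $\lambda$, so $\Xi_{\rho(\si)}\setminus{\sf k}_\lambda\subset\Xi_{\rho(\si)}\setminus{\sf k}$, and consequently
$$\overline{(\Xi_{\rho(\si)}\setminus{\sf k}_\lambda)\bu\si}\subset\overline{(\Xi_{\rho(\si)}\setminus{\sf k})\bu\si}.$$
Therefore $\bigcap_{\lambda\in\Lambda}\overline{(\Xi_{\rho(\si)}\setminus{\sf k}_\lambda)\bu\si}$ is contained in $\overline{(\Xi_{\rho(\si)}\setminus{\sf k})\bu\si}$ for every compact ${\sf k}$, and intersecting over all such ${\sf k}$ yields the inclusion into $\mathfrak L_\si$. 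The main (though mild) obstacle is handling the exhaustion hypothesis carefully enough to get each point of a compact set into the interior of some ${\sf k}_\lambda$, so that compactness can be invoked; once this is done, directedness closes the argument.
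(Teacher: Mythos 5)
Your proof is correct and follows essentially the same route as the paper's: the easy inclusion comes from the exhausting family being a subfamily of $\mathcal K(\Xi_{\rho(\si)})$, and the reverse inclusion rests on showing the family is cofinal, by covering an arbitrary compact set with the interiors ${\sf k}_\lambda^\circ$ and invoking compactness plus directedness. If anything, your treatment is slightly more careful than the paper's, since you explicitly justify why every point of $\Xi_{\rho(\si)}$ lies in the interior of some ${\sf k}_\lambda$ before extracting the finite subcover.
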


\begin{proof}
Obviously, $\mathfrak L_\si\subset\bigcap_{\lambda\in\Lambda}\overline{(\Xi_{\rho(\si)}\!\setminus{\sf k}_\lambda)\bu\si}$\,. For the opposite inclusion, it is enough to find for every compact subset ${\sf k}$ of $\Xi_{\rho}$ an index $\mu\in\Lambda$ such that ${\sf k}\subset{\sf k}_\mu$\,. Indeed, ${\sf k}$ is covered by the family of interiors of the sets ${\sf k}_\lambda$ so, by compactness, it is also covered by a finite subfamily $\big\{{\sf k}^\circ_{\lambda_1},\dots{\sf k}^\circ_{\lambda_m}\big\}$\,. Since $\Lambda$ is directed, that index exists. So we have ${\sf k}\subset{\sf k}_\mu$ implying that 
$$
\overline{(\Xi_{\rho(\si)}\setminus{\sf k}_\mu)\bu\si}\subset\overline{(\Xi_{\rho(\si)}\setminus{\sf k})\bu\si}.
$$ 
The conclusion follows.
\end{proof}

\begin{ex}\label{bor1}
Consider the pair groupoid $\Xi=\Z^2$ over $\Z$\,. In this case $\Xi_n=\Z\times\{n\}$ can be exhausted by the family $\{\{-\lambda,\dots,\lambda\}\times \{n\}\}_{\lambda\in\mathbb N}$ so, by Lemma \ref{technicallemma}, 
$$
\mathfrak L_n=\bigcap_{\lambda\in\mathbb N}\{k\in\Z\mid |k|>\lambda\}=\emptyset\,.
$$
\end{ex}

\begin{prop}\label{eusper}
\begin{enumerate}
\item[(i)]
All the points in the orbit of $\si$ have the same limit set $\mathfrak L_\si$\,. One has 
\begin{equation}\label{inflorit}
\overline{\mathfrak O}_\si=\mathfrak O_\si\cup\mathfrak L_\si\,.
\end{equation} 
\item[(ii)]
If $\,\Xi$ is open and locally compact, the closed set $\mathfrak L_\si$ is invariant. 
\item[(iii)]
If the orbit of $\si$ is relatively compact, $\mathfrak L_\si$ is non-empty and it attracts the points of the orbit $\mathfrak O_\si$\,:  for every neighborhood $W$ of $\mathfrak L_\si$\,, there is a compact subset ${\sf k}$ of $\,\Xi_{\rho(\si)}$ such that 
\begin{equation}\label{zappa}
\big(\Xi_{\rho(\si)}\!\setminus\!{\sf k}\big)\bu\si\subset W.
\end{equation}
\end{enumerate}
\end{prop}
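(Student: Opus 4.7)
For part (i), my strategy is to use the characterization $(i)\Leftrightarrow(iii)$ of Lemma \ref{labil}. Given $\tau=\eta\bu\si\in\mathfrak O_\si$ and $\lambda\in\mathfrak L_\si$, I pick a divergent net $(\xi_i)\subset\Xi_{\rho(\si)}$ with $\xi_i\bu\si\to\lambda$ and form $\zeta_i:=\xi_i\eta^{-1}\in\Xi_{\rho(\tau)}$. Then $\zeta_i\bu\tau=\xi_i\bu\si\to\lambda$ by the associativity axiom, and $(\zeta_i)$ remains divergent, since right-multiplication by $\eta$ sends compacts to compacts: if $\zeta_i$ stayed in some compact ${\sf k}\subset\Xi_{\rho(\tau)}$ then $\xi_i=\zeta_i\eta$ would stay in ${\sf k}\eta$, a contradiction. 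This gives $\mathfrak L_\si\subset\mathfrak L_\tau$, and the reverse inclusion follows symmetrically from $\si=\eta^{-1}\bu\tau$. For the equality \eqref{inflorit}, the inclusion $\supset$ is clear: $\mathfrak O_\si\subset\overline{\mathfrak O}_\si$ is trivial, and if $\lambda\in\mathfrak L_\si$ the approximating net from Lemma \ref{labil} lies in $\mathfrak O_\si$. For $\subset$, if $\tau\in\overline{\mathfrak O}_\si$ is the limit of some $\xi_i\bu\si$, I dichotomize: either $(\xi_i)$ admits a subnet that stays in a compact subset of $\Xi_{\rho(\si)}$, and then extracting a convergent subnet $\xi_j\to\xi$ gives $\tau=\xi\bu\si\in\mathfrak O_\si$ by continuity; or $(\xi_i)$ admits a divergent subnet, so $\tau\in\mathfrak L_\si$ by Lemma \ref{labil}.

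For part (ii), closedness of $\mathfrak L_\si$ is immediate from its definition as an intersection of closed sets. For invariance, fix $\tau\in\mathfrak L_\si$ and $\xi\in\Xi$ with $\d(\xi)=\rho(\tau)$; I want $\xi\bu\tau\in\mathfrak L_\si$. Pick a divergent net $(\eta_i)\subset\Xi_{\rho(\si)}$ with $\eta_i\bu\si\to\tau$, which forces $\r(\eta_i)=\rho(\eta_i\bu\si)\to\rho(\tau)=\d(\xi)$. The crux—the main technical hurdle of the whole proposition, and an instance of the difficulty (a) mentioned in the introduction—is to \emph{lift} this convergence to a net converging to $\xi$ with the right source. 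Using local compactness, pick a neighborhood $U$ of $\xi$ with compact closure $\overline U$; openness of $\d$ implies $\d(U)$ is an open neighborhood of $\d(\xi)$, hence contains $\r(\eta_i)$ eventually, and we select $\xi_{U,i}\in U$ with $\d(\xi_{U,i})=\r(\eta_i)$. A standard subnet construction, indexed by pairs (neighborhood of $\xi$, index from $I$), produces a subnet $(\eta_j)$ of $(\eta_i)$ and elements $\xi_j\to\xi$ with $\d(\xi_j)=\r(\eta_j)$. Then $(\xi_j\eta_j)\bu\si=\xi_j\bu(\eta_j\bu\si)\to\xi\bu\tau$ by continuity of $\bu$, and $(\xi_j\eta_j)\subset\Xi_{\rho(\si)}$. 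To show divergence, suppose some compact ${\sf k}\subset\Xi_{\rho(\si)}$ contained a subnet $\xi_{j'}\eta_{j'}$; since eventually $\xi_{j'}\in\overline U$, we would obtain $\eta_{j'}=\xi_{j'}^{-1}(\xi_{j'}\eta_{j'})\in(\overline U^{-1}{\sf k})\cap\Xi_{\rho(\si)}$, which is compact in $\Xi_{\rho(\si)}$ (continuous image of a compact set intersected with a closed fibre), contradicting the divergence of $(\eta_j)$. Lemma \ref{labil} then yields $\xi\bu\tau\in\mathfrak L_\si$.

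For part (iii), assume $\overline{\mathfrak O}_\si$ is compact. Non-emptiness of $\mathfrak L_\si$: by strong non-compactness of $\Xi$, choose any divergent net $(\xi_i)$ in $\Xi_{\rho(\si)}$; then $\xi_i\bu\si\in\overline{\mathfrak O}_\si$, which is compact, so a subnet $\xi_j\bu\si\to\tau\in\overline{\mathfrak O}_\si$ exists, and since subnets of divergent nets are divergent, Lemma \ref{labil} gives $\tau\in\mathfrak L_\si$. For the attraction property \eqref{zappa}, I argue by contradiction: if no compact ${\sf k}$ works, then for every ${\sf k}\in\mathcal K(\Xi_{\rho(\si)})$ one can pick $\xi_{\sf k}\in\Xi_{\rho(\si)}\setminus{\sf k}$ with $\xi_{\sf k}\bu\si\notin W$. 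Indexed by $\mathcal K(\Xi_{\rho(\si)})$ ordered by inclusion, the net $(\xi_{\sf k})$ is divergent and $\xi_{\sf k}\bu\si$ lies in the compact set $\overline{\mathfrak O}_\si\cap W^c$; a convergent subnet produces $\lambda\in\overline{\mathfrak O}_\si\cap W^c$ with $\lambda\in\mathfrak L_\si$ by Lemma \ref{labil}, but $\mathfrak L_\si\subset W$ forces $\lambda\in W$, a contradiction.
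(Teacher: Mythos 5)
Your proof is correct. Parts (i)(first claim) and (ii) follow essentially the paper's route: the invariance of $\mathfrak L_\si$ along the orbit rests in both cases on the compactness of right-translates (the paper phrases it as the identity $\big(\Xi_{\rho(\xi\bu\si)}\!\setminus{\sf k}\big)\bu(\xi\bu\si)=\big(\Xi_{\rho(\si)}\!\setminus{\sf k}\xi\big)\bu\si$ rather than through nets), and your part (ii) is the paper's argument with Fell's criterion \cite[Prop.\,1.1]{Wi} unpacked by hand; your explicit check that $(\xi_j\eta_j)$ stays divergent, via $\eta_{j'}\in\overline{U}^{-1}{\sf k}$, is a welcome addition, since the paper merely asserts this from local compactness. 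The genuine differences are in the other two items. For \eqref{inflorit} the paper avoids nets entirely: it writes $\overline{\mathfrak O}_\si=\overline{({\sf K}\cup{\sf K}^c)\bu\si}=({\sf K}\bu\si)\cup\overline{{\sf K}^c\bu\si}$, using that ${\sf K}\bu\si$ is compact hence closed, and intersects over ${\sf K}\in\mathcal K(\Xi)$; your dichotomy ``frequently in some compact set versus divergent'' reaches the same conclusion, is arguably more transparent, and is valid (a net with no subnet inside any compact set is itself divergent). For (iii) the paper runs a covering argument — the sets $\big(\overline{(\Xi_{\rho(\si)}\!\setminus{\sf k})\bu\si}\big)^c$ cover the compact set $\overline{\mathfrak O}_\si\setminus\big(W\cap\overline{\mathfrak O}_\si\big)$, a finite subcover produces ${\sf k}$, and non-emptiness of $\mathfrak L_\si$ then drops out for free (an empty limit set would admit $W=\emptyset$, forcing $\Xi_{\rho(\si)}$ to be compact) — whereas you prove non-emptiness directly from a divergent net and obtain attraction by contradiction; both are sound. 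One small point in your (iii): for a neighborhood $W$ that is not open, $\overline{\mathfrak O}_\si\cap W^c$ need not be closed, hence need not be compact; either reduce to open $W$ at the outset, as the paper does, or observe that the subnet limit lies in $\overline{W^c}$, which misses the interior of $W$ and so cannot be a point of $\mathfrak L_\si$.
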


\begin{proof}
$(i)$ We observe that, for ${\sf k}\subset\Xi_{\rho(\xi\bu\si)}$\,, with rigorous computations based on the definitions
$$
\big(\Xi_{\rho(\xi\bu\si)}\!\setminus{\sf k}\big)\bu(\xi\bu\si)=\big[\big(\Xi_{\rho(\xi\bu\si)}\!\setminus{\sf k}\big)\xi\big]\bu\si=\big(\Xi_{\rho(\xi\bu\si)}\xi\setminus{\sf k}\xi\big)\bu\si=(\Xi_{\rho(\si)}\!\setminus{\sf k}\xi)\bu\si\,,
$$
implying that $\mathfrak L_{\,\xi\bu\si}\subset\mathfrak L_\si$ (because ${\sf k}\xi$ is compact), from which the first statement follows. 

\smallskip
The $\supset$ inclusion in \eqref{inflorit} is obvious. For each ${\sf K}\in\mathcal K(\Xi)$ one can write
$$
\overline{\mathfrak O}_\si=\overline{\Xi\bu\si}=\overline{\big({\sf K}\cup{\sf K}^{\rm c}\big)\bu\si}=({\sf K}\bu\si)\cup\overline{{\sf K}^{\rm c}\bu\si}\subset\mathfrak O_\si\cup\overline{{\sf K}^{\rm c}\bu\si}\,,
$$
from which $\subset$ follows. 

\smallskip
$(ii)$ Let $(\xi,\tau)\in\Xi\!\Join\!\Si$ with $\tau\in\mathfrak L_\si$\,. 
Using the convergence criterion $(iii)$ of Lemma \ref{labil}, there exists some divergent net $(\eta_i)_{i\in I}$ 
in $\Xi_{\rho(\si)}$ such that $\eta_i\bu\si\to\tau$. A priori, there is no reason to think that $(\xi,\eta_i)\in\Xi^{(2)}$.
But because of the openness of $\Xi$ and Fell's criterion \cite[Prop.\,1.1]{Wi} applied to the source map, there exists a net $(\xi_j)_{j\in J}$ and a subnet $(\eta_{i_j})_{j\in J}$ such that  $\d(\xi_j)=\r(\eta_{i_j})$ and $\xi_j\to\xi$\,, so one may write
$$
\xi\bu\tau=\lim_{j}\xi_j\bu(\eta_{i_j}\bu\si)=\lim_{j}\,(\xi_j\eta_{i_j})\bu\si\,.
$$
Since $\Xi$ is locally compact, $(\xi_j \eta_{i_j})_{j\in J}$ is also divergent.

\smallskip
$(iii)$ It is enough to show that, for a fixed open neighborhood $W$ of $\mathfrak L_\si$\,, there is a compact subset ${\sf k}$ of $\,\Xi_{\rho(\si)}$ such that \eqref{zappa} holds: if $\mathfrak L_\si$ were void, the empty set would be a neighborhood, which contradicts the inclusion ($\Xi$ is strongly non-compact). For any ${\sf k}\in\mathcal K(\Xi_{\rho(\si)})$\,, we set $\Si_\si({\sf k}):=\overline{\big(\Xi_{\rho(\si)}\!\setminus{\sf k}\big)\bu\si}$\,; complements will refer to $\overline{\mathfrak O}_\si$\,. Since $\mathfrak L_\si\subset W\cap\overline{\mathfrak O}_\si$\,, the family $\big\{\Si_\si({\sf k})^c\,\big\vert\,{\sf k}\in\mathcal K(\Xi_{\rho(\si)})\big\}$ is an open cover of the complement of $W\cap\overline{\mathfrak O}_\si$ in the compact space $\overline{\mathfrak O}_\si$\,. We extract a finite subcover  $\big\{\Si_\si({\sf k}_i)^c\,\big\vert\,i=1,\dots,n\big\}$\,. Then
$$
\Big(\Xi_{\rho(\si)}\!\setminus\bigcup_{i=1}^n{\sf k}_i\Big)\bu\si\subset\bigcap_{i=1}^n \overline{\big(\Xi_{\rho(\si)}\!\setminus{\sf k}_i\big)\bu\si}\subset W\cap\overline{\mathfrak O}_\si\subset W
$$
and the proof is finished, since a finite union of compact sets is compact.
\end{proof}

\begin{ex}\label{bor3}
We indicate now an example that is relevant for the problem of invariance of the limit sets. Let $X$ be a topological space, $\G$ a non-compact group and $\Pi\subset X^2$ an equivalence relation. The obvious product groupoid $\Xi=\Pi\times\G$ has unit space $\Si\equiv\Xi^{(0)}\!={\rm Diag}(X)\times\{\e\}\cong X$. The $\d$-fiber of $x$ is $\Xi_x\!=\{(y,x)\in\Pi\}\times\G$\,.

\smallskip
We prove now that $\mathfrak L_x=\overline{\mathfrak O}_x$\,. Let $y\in \overline{\mathfrak O}_x$ and let $(x_i)_{i\in I}\subset \mathfrak O_x$ be a net converging to $y$\,. As $\G$ is not compact, for every ${\sf k}\in\mathcal K(\G)$ there exists some $g_{\sf k}\in\G\setminus {\sf k}$\,. The net $(x_i,x,g_{\sf k})_{(i,{\sf k})\in I\times\mathcal K(\G) }$ belongs to $\Xi_x$\,, is divergent and fulfills 
$$
\lim_{i,{\sf k}}\,(x_i,x,g_{\sf k})\circ x=\lim_{i}x_i=y\,.
$$ 
Therefore $\mathfrak L_x=\overline{\mathfrak O}_x$\,, because of Lemma \ref{labil} and equation \eqref{inflorit}.

\smallskip
In particular, one can choose $X=\R$\,, $\G=\Z$\,. 
If $\Pi$ is given by 
\begin{equation*}\label{comeback}
x\,\Pi\,y\quad \Leftrightarrow\quad x,y\leq0 \textup{ \,or\, }x,y>0\,,
\end{equation*} 
one has $\mathfrak L_{1}=\overline{\mathfrak O}_{1}=[0,\infty)$\,, which is not invariant! Note that the equivalence relation is not open.
\end{ex}

\subsection{Recurrent points and wandering}\label{craciclor}

We keep the framework of the preceding subsection.

\begin{defn}\label{dransver}
When $\si\in\mathfrak L_\si$ holds, we say that $\si$ is \textit{a recurrent point}. We denote by $\Si_{\rm rec}^\th\equiv\Si_{\rm rec}$ the  family of all the recurrent points of the groupoid action.
\end{defn}

In \eqref{inflorit} the union could be disjoint or not.

\begin{prop}\label{recur}
For a point $\si\in\Si$\,, consider the following five conditions: 
\begin{enumerate}
\item[(a)] $\mathfrak O_\si\cap\mathfrak L_\si\ne\emptyset$\,,\quad (b) $\mathfrak L_\si=\overline{\mathfrak O}_\si$\,, \quad (c) $\si\in\mathfrak L_\si$\,, 
\item[(d)] there is a divergent net $(\xi_i)_{i\in I}\subset\Xi_{\rho(\si)}$ such that $\xi_i\bu\si\to\si$\,, 
\item[(e)] $\,\widetilde\Xi_\si^U$ is not relatively compact for any open neighborhood $U$ of $\si$\,.
\end{enumerate}
Then $(b)\Rightarrow(c)\Leftrightarrow(d)\Leftrightarrow(e)\Rightarrow(a)$\,. If $\,\Xi$ is open and locally compact, the five conditions are equivalent.
\end{prop}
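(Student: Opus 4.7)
The plan is to reduce most of the equivalences to Lemma \ref{labil} applied to the particular case $\tau=\si$\,, and to extract the remaining implication, which is the new substance in the open locally compact setting, from Proposition \ref{eusper}.

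For $(b)\Rightarrow(c)$ the reasoning is tautological: if $\mathfrak L_\si=\overline{\mathfrak O}_\si$\,, then $\si\in\mathfrak O_\si\subset\overline{\mathfrak O}_\si=\mathfrak L_\si$\,. For $(c)\Leftrightarrow(d)\Leftrightarrow(e)$ one simply specializes Lemma \ref{labil} to $\tau=\si$\,: the conditions (c), (d) and (e) here match respectively (i), (iii) and (ii') there, and all three were already shown to be mutually equivalent. The implication $(c)\Rightarrow(a)$ is trivial, since $\si\in\mathfrak O_\si$\,.

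The only nontrivial new content is the closing implication $(a)\Rightarrow(b)$ under the assumption that $\Xi$ is open and locally compact. Given $\tau\in\mathfrak O_\si\cap\mathfrak L_\si$\,, Proposition \ref{eusper}(i) yields $\mathfrak L_\tau=\mathfrak L_\si$\,, while Proposition \ref{eusper}(ii) asserts that $\mathfrak L_\si$ is closed and invariant. Since $\tau$ lies in the invariant set $\mathfrak L_\si$\,, we get $\mathfrak O_\tau\subset\mathfrak L_\si$\,; but $\tau$ and $\si$ share the same orbit, so $\mathfrak O_\si\subset\mathfrak L_\si$ and, by closedness, $\overline{\mathfrak O}_\si\subset\mathfrak L_\si$\,. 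The reverse inclusion $\mathfrak L_\si\subset\overline{\mathfrak O}_\si$ is already contained in formula \eqref{inflorit}, so equality $(b)$ holds.

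The main obstacle, and the reason extra hypotheses are needed for the last step, is precisely the invariance of the limit set: without openness and local compactness of $\Xi$\,, Example \ref{bor3} shows that $\mathfrak L_\si$ may fail to be invariant, and then the passage from ``one orbit point lies in $\mathfrak L_\si$'' to ``the entire orbit, hence its closure, lies in $\mathfrak L_\si$'' breaks down, so $(a)$ cannot in general be lifted to $(b)$.
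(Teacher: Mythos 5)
Your proposal is correct and follows essentially the same route as the paper: the equivalence $(c)\Leftrightarrow(d)\Leftrightarrow(e)$ is Lemma \ref{labil} specialized to $\tau=\si$, the implications $(b)\Rightarrow(c)$ and $(c)\Rightarrow(a)$ are immediate, and $(a)\Rightarrow(b)$ in the open locally compact case rests on the invariance and closedness of $\mathfrak L_\si$ from Proposition \ref{eusper}(ii) together with the inclusion $\mathfrak L_\si\subset\overline{\mathfrak O}_\si$ from \eqref{inflorit}. You merely spell out the step the paper leaves implicit (that a point of $\mathfrak O_\si$ lying in the invariant closed set $\mathfrak L_\si$ forces $\overline{\mathfrak O}_\si\subset\mathfrak L_\si$), which is a faithful elaboration rather than a different argument.
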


\begin{proof}
The equivalence $(c)\Leftrightarrow(d)\Leftrightarrow(e)$ is the content of Lemma \ref{labil} for $\tau=\si$.

\smallskip
$(b)\Rightarrow(c)$ and $(c)\Rightarrow(a)$ are obvious.

\smallskip
To finish the proof, assume now that $\Xi$ is open and locally compact and we will show $(a)\Rightarrow(b)$. We know from Proposition \ref{eusper} (ii) that $\mathfrak L_\si$ is (closed and) invariant, so if (a) holds it contains the closure of the orbit $\mathfrak O_\si$. But it cannot be strictly bigger, by \eqref{inflorit}.
\end{proof}

\begin{cor}\label{laloc}
If $\,\Xi$ is open and locally compact, $\Si_{\rm rec}$ is invariant. 
\end{cor}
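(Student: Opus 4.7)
The proof should be very short given Proposition \ref{eusper}. My plan is to combine two facts already established there: $(i)$ the limit set is constant along orbits, and $(ii)$ under the hypotheses of openness and local compactness, each limit set $\mathfrak L_\si$ is invariant.

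Concretely, I would take $\si\in\Si_{\rm rec}$ and $\xi\in\Xi$ with $\d(\xi)=\rho(\si)$, and aim to show $\xi\bu\si\in\mathfrak L_{\xi\bu\si}$\,. By Proposition \ref{eusper}\,(i), all points of the orbit $\mathfrak O_\si$ share the same limit set, so
$$
\mathfrak L_{\xi\bu\si}=\mathfrak L_\si\,.
$$
Recurrence of $\si$ means $\si\in\mathfrak L_\si$\,. Under the standing hypotheses that $\Xi$ is open and locally compact, Proposition \ref{eusper}\,(ii) says $\mathfrak L_\si$ is invariant, so $\xi\bu\si\in\mathfrak L_\si$\,. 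Combining these two equalities, $\xi\bu\si\in\mathfrak L_{\xi\bu\si}$\,, which says exactly that $\xi\bu\si$ is recurrent.

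There is really no obstacle here; the whole point of the corollary is simply to package the two previous items. The only thing one needs to notice, if one wanted to be very scrupulous, is that the orbit-invariance statement of Proposition \ref{eusper}\,(i) was written as an inclusion $\mathfrak L_{\xi\bu\si}\subset\mathfrak L_\si$\,, but applying it with $\xi^{-1}$ to the point $\xi\bu\si$ yields the opposite inclusion and hence the equality used above. No openness or local compactness is required for this step; those hypotheses enter only through the invariance assertion of part $(ii)$.
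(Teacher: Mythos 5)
Your proof is correct and is essentially the argument the paper gives: both reduce the corollary to Proposition \ref{eusper}, using that limit sets are constant along orbits and, under the openness and local compactness hypotheses, invariant. The only (purely cosmetic) difference is that the paper routes the conclusion through the equivalent characterization $\mathfrak L_\si=\overline{\mathfrak O}_\si$ of recurrence (condition $(b)$ of Proposition \ref{recur}), whereas you apply the invariance of $\mathfrak L_\si$ directly to the membership $\si\in\mathfrak L_\si$.
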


\begin{proof}
Since $\Xi$ is open and locally compact, we can describe its fellowship to $\mathfrak L_\si$ by condition $(b)$\,. Using Proposition \ref{eusper},\,(ii), for a recurrent point $\si$ and for $\xi\in\Xi_{\rho(\si)}$ we can write
$$
\mathfrak L_{\xi\bu\si}=\mathfrak L_\si=\overline{\mathfrak O}_\si=\overline{\mathfrak O}_{\xi\bu\si}\,,
$$
so $\xi\bu\si$ is also recurrent.
\end{proof}

\begin{defn}\label{uandering}
The point $\si\in\Si$ is {\it wandering} with respect to the action $(\Xi,\rho,\th,\Si)$ with strongly non-compact groupoid if $\si$ has a neighborhood $W$ such that $\widetilde\Xi^W_W$ is relatively compact. In the opposite case, we say that $\si$ is {\it non-wandering}. We denote by $\Si^\th_{\rm nw}\equiv\Si_{\rm nw}$ the family of all the non-wandering points. If $\Si^\th_{\rm nw}=\Si$ one says that {\it the action is non-wandering}.
\end{defn}

\begin{ex}\label{lalok}
If $(\G,\gamma,\Si)$ is a topological group dynamical system, one says that $\si\in\Si$ is wandering if ${\rm Rec}_\gamma(W,W)$ is relatively compact for some neighborhood $W$ of $\si$ (actually, this is mainly used for $\G=\Z,\R$)\,.  The discussion in Example \ref{bunul} shows that by Definition \ref{uandering} one gets the same concept.
\end{ex}

\begin{rem}\label{zabook}
Suppose that both $\Xi$ and $\Si$ are (Hausdorff and) locally compact. In this case wandering points are involved, with a different language, in \cite[Sect.\,2.1]{Wi}, extending the group case of \cite{Pal}. One says that the action is {\it proper} if the map 
$$
\Xi\!\Join\!\Si\ni(\xi,\si)\overset{\vartheta}{\to}(\xi\bu\si,\si)\in\Si\!\times\!\Si
$$ 
is proper, i.e.\! that inverse images of compact sets are compact. A strictly weaker concept is that of {\it a Cartan action}, for which each point $\si\in\Si$ has a compact neighborhood $W$ such that $\vartheta^{-1}(W\!\times\!W)$ is compact. In \cite[Lemma\,2.23]{Wi} it is shown that this happens precisely when all the points of $\Si$ are wandering (without mentioning the term). Consequently, in the locally compact case, for a Cartan action one has $\Si_{\rm nw}=\emptyset$\,. In addition \cite[Ex,\,2.1.19]{Wi} states that {\it for a Cartan action all the orbits are closed}.
\end{rem}

\begin{prop}\label{haihui}
The set $\Si_{\rm nw}$ is closed. If the groupoid $\Xi$ is open and locally compact, $\Si_{\rm nw}$ is also invariant. 
\end{prop}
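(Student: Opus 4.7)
Closedness of $\Si_{\rm nw}$ is straightforward to verify by showing its complement, the set of wandering points, is open. Indeed, if $\si$ is wandering via a neighborhood $W$ making $\widetilde\Xi^W_W$ relatively compact, then the same $W$ witnesses that every one of its points is wandering, so $W$ is contained in the complement.

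For invariance under the extra hypotheses, fix $\si\in\Si_{\rm nw}$ and $\xi\in\Xi_{\rho(\si)}$, and set $\tau:=\xi\bu\si$. Given an arbitrary open neighborhood $V$ of $\tau$, the plan is to construct an open $U\ni\xi$ with compact closure together with an open neighborhood $W$ of $\si$ satisfying the key inclusion $\widetilde\Xi^W_W\subset U^{-1}\,\widetilde\Xi^V_V\,U$. Since $\si$ is non-wandering, the left-hand side is not relatively compact. If $\widetilde\Xi^V_V$ were relatively compact, then — using continuity of multiplication, the fact that $\Xi^{(2)}$ is closed in $\Xi\!\times\!\Xi$, and the compactness of $\overline U$ and $\overline{U^{-1}}$ — the right-hand side would lie in a compact subset of $\Xi$, a contradiction. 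Hence $\widetilde\Xi^V_V$ is not relatively compact and $\tau\in\Si_{\rm nw}$.

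The construction of $U$ and $W$ proceeds in three steps. First, continuity of the action $\th$ at $(\xi,\si)$ supplies open $U_0\ni\xi$ and $W_0\ni\si$ with $U_0\bu W_0\subset V$, and local compactness lets us shrink $U_0$ to an open $U\ni\xi$ with $\overline U$ compact. Second, openness of $\Xi$ makes $\d(U)$ an open neighborhood of $\rho(\si)=\d(\xi)$, so $W:=W_0\cap\rho^{-1}(\d(U))$ is an open neighborhood of $\si$ satisfying $W\subset W_0$ and $\rho(W)\subset\d(U)$. Third, for the inclusion: if $\eta\in\widetilde\Xi^W_W$ is witnessed by $\si'\in W$ with $\eta\bu\si'\in W$, then $\d(\eta)=\rho(\si')\in\d(U)$ and $\r(\eta)=\rho(\eta\bu\si')\in\d(U)$, hence one can pick $\zeta_1,\zeta_2\in U$ with $\d(\zeta_1)=\r(\eta)$ and $\d(\zeta_2)=\d(\eta)$. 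Setting $\mu:=\zeta_1\eta\zeta_2^{-1}$, a quick check shows $\zeta_2\bu\si'\in U\bu W\subset V$ and $\mu\bu(\zeta_2\bu\si')=\zeta_1\bu(\eta\bu\si')\in U\bu W\subset V$, so $\mu\in\widetilde\Xi^V_V$ and $\eta=\zeta_1^{-1}\mu\,\zeta_2\in U^{-1}\widetilde\Xi^V_V\,U$.

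The main obstacle is ensuring that the sandwiching elements $\zeta_1,\zeta_2$ always exist inside one single chosen $U$ as $\eta$ ranges over $\widetilde\Xi^W_W$; this is precisely where openness of $\Xi$ is indispensable, since it allows us to arrange $\rho(W)\subset\d(U)$ and thereby trap the sources and ranges of the relevant $\eta$'s inside $\d(U)$. Local compactness enters only at the very last step, to turn relative compactness of $\widetilde\Xi^V_V$ into relative compactness of the bigger set $U^{-1}\widetilde\Xi^V_V\,U$.
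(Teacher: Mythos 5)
Your proof is correct and rests on the same mechanism as the paper's: conjugating a recurrence set by a relatively compact neighborhood of the groupoid element (your $\widetilde\Xi^W_W\subset U^{-1}\,\widetilde\Xi^V_V\,U$ versus the paper's $\widetilde\Xi^{{\sf N}_\eta\bu W}_{{\sf N}_\eta\bu W}\subset{\sf N}_\eta\,\widetilde\Xi^W_W\,{\sf N}_\eta^{-1}$), together with the fact that products of relatively compact sets are relatively compact. The only difference is that the paper runs the argument in the contrapositive direction (wandering points have wandering translates), which lets it take ${\sf N}_\eta\bu W$ directly as the witnessing open neighborhood and thereby avoids your extra steps involving continuity of $\th$ and the auxiliary set $\rho^{-1}(\d(U))$.
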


\begin{proof}
Let $\tau\in\overline{\Si}_{\rm nw}$. By definition of closure, every open neighborhood $U$ of $\tau$ intersects $\Si_{\rm nw}$\,, so let $\si\in\Si_{\rm nw}\cap U$. As $U$ is a neighborhood of $\si\in\Si_{\rm nw}$\,, the set $\widetilde\Xi_U^U$ is not relatively compact. Thus $\tau\in\Si_{\rm nw}$\,.

\smallskip
Suppose that $\Xi$ is open and locally compact and let $\si\notin\Si_{\rm nw}$\,; we are going to show that $\eta\bu\si\notin\Si_{\rm nw}$\,, for all $\eta\in\Xi_{\rho(\si)}$\,. Let ${\sf N}_\eta$ a relatively compact open neighborhood of $\eta$ and let $W$ be an open neighborhood of $\si$ such that $\widetilde\Xi^W_W$ is relatively compact. Since $\Xi$ is open, ${\sf N}_\eta\bu W$ is an open neighborhood of $\eta\bu\si$. A plain application of the definitions leads to $\widetilde\Xi^{{\sf N}_\eta\bu W}_{{\sf N}_\eta\bu W}\subset{\sf N}_\eta\,\widetilde\Xi^W_W{\sf N}_\eta^{-1}$, which is relatively compact.
\end{proof}

\begin{ex}
The invariance may fail without openness.  Let $\Xi$ be the groupoid constructed in final part of Example \ref{bor3}, the one associated to the partition $\R=(-\infty,0]\cup(0,\infty)$ and to the group $\G=\Z$\,. Then consider the wide (locally compact) subgroupoid 
$$
\Delta:=\{(x,y,n)\in\Xi\mid x,y> 0\textup{ or }n=0\}.
$$ 
If we denote by $\th_1$ and $\th_2$ the canonical actions of $\Xi$ and $\Delta$ in $\Si=\R$\,, respectively, we have 
\begin{equation}\label{zuvertaj}
\Si_{\rm nw}^{\th_1}=\R \quad\textup{and}\quad\Si_{\rm nw}^{\th_2}=[0,\infty)\,.
\end{equation}
To check the second equality, note that $\widetilde\Delta_W^W=W\!\times\! W\!\times\!\{0\}$ if $W\subset(-\infty,0)$\,, but $\widetilde\Delta_W^W=W\!\times\! W\!\times\!\,\Z$ when $W\subset(0,\infty)$\,. The second set in \eqref{zuvertaj} is not invariant, since $0$ is orbit-equivalent with any negative number. In addition, it follows easily that 
$$
\mathfrak L_x^{\theta_2}=
\left\{\begin{array}{ll}
[0,\infty)     & \textup{if\ } x>0 \\
\emptyset      & \textup{if\ } x\leq0\,. \\
\end{array}\right.
$$ 
So  
$\Si_{\rm rec}^{\th_2}=(0,\infty)$ which {\it is not closed}. 
\end{ex}

\begin{prop}\label{lalik}
One has
\begin{equation}\label{ocompleta}
\Si_{\rm rec}\subset\bigcup_{\si\in\Si}\mathfrak L_\si\subset\Si_{\rm nw}\,.
\end{equation}
\end{prop}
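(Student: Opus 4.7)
The first inclusion is a tautology: if $\si\in\Si_{\rm rec}$\,, then $\si\in\mathfrak L_\si$ by Definition \ref{dransver}, so $\si$ belongs to the union. The substance is in the second inclusion, for which my plan is to take an arbitrary $\tau\in\mathfrak L_\si$ and an arbitrary open neighborhood $W$ of $\tau$, and to exhibit inside $\widetilde\Xi_W^W$ a set that fails to be relatively compact.

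The mechanism I would use is a translation trick. Since $\tau\in\mathfrak L_\si$\,, Lemma \ref{labil} $(ii')$ tells us that $\widetilde\Xi_\si^W$ is not relatively compact, and in particular it is non-empty, so I can pick some $\xi_0\in\widetilde\Xi_\si^W$ (meaning $\xi_0\bu\si\in W$). For any other $\eta\in\widetilde\Xi_\si^W$ one has $\d(\eta)=\rho(\si)=\d(\xi_0)=\r(\xi_0^{-1})$, hence the product $\eta\,\xi_0^{-1}$ is well defined, satisfies $\d(\eta\,\xi_0^{-1})=\r(\xi_0)=\rho(\xi_0\bu\si)$\,, and
$$
\big(\eta\,\xi_0^{-1}\big)\bu(\xi_0\bu\si)=\eta\bu\si\in W.
$$
Since also $\xi_0\bu\si\in W$\,, this gives $\eta\,\xi_0^{-1}\in\widetilde\Xi_W^W$\,. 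Therefore
$$
\widetilde\Xi_W^W\supset\widetilde\Xi_\si^W\,\xi_0^{-1}\,.
$$

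The final step is to argue that the inclusion forces $\widetilde\Xi_W^W$ to be non relatively compact. Suppose, towards a contradiction, that its closure ${\sf K}$ is compact. Then the set $\{\eta\in{\sf K}\!\mid\!\d(\eta)=\r(\xi_0)\}$ is a closed subset of a compact set, hence compact (we use Hausdorffness of $X$ together with continuity of $\d$). Its image ${\sf K}\cdot\{\xi_0\}$ under the continuous right-multiplication by $\xi_0$ is therefore compact, and contains $\widetilde\Xi_\si^W\,\xi_0^{-1}\cdot\xi_0=\widetilde\Xi_\si^W$\,. This contradicts Lemma \ref{labil} $(ii')$ and proves $\tau\in\Si_{\rm nw}$\,.

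The only point that requires some care — and where I expect a naive proof to slip — is the composability of $\widetilde\Xi_\si^W$ with $\xi_0^{-1}$ on the right and of ${\sf K}$ with $\xi_0$ on the right: the source/range conditions happen to line up precisely because every element of $\widetilde\Xi_\si^W$ lives in $\Xi_{\rho(\si)}$ and $\xi_0$ was chosen inside the same fibre. Apart from this bookkeeping, no openness or local compactness hypothesis on $\Xi$ is needed, which is consistent with the generality in which the statement of Proposition \ref{lalik} is given.
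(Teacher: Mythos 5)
Your proof is correct and follows essentially the same route as the paper's: both reduce to Lemma \ref{labil} $(ii')$ and the translation $\widetilde\Xi_\si^W\,\xi_0^{-1}\subset\widetilde\Xi_W^W$ (which the paper obtains via Lemma \ref{ushor} as $\widetilde\Xi_\si^U\xi^{-1}=\widetilde\Xi_{\xi\bu\si}^U$). You merely spell out in more detail the final step that right translation preserves (non-)relative compactness, which the paper leaves implicit.
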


\begin{proof}
The first inclusion follows from the definition of recurrent points. So we only need to prove that $\mathfrak L_\si\subset\Si_{\rm nw}$\,. Pick $\tau\in\mathfrak L_\si$ and let $U$ be a neighborhood of $\tau$. By Lemma \ref{labil}, we already know that $\widetilde\Xi_\si^U$ is not relatively compact. If $\xi\in\widetilde\Xi_\si^U$, then $\xi\bu\si\in U$; using Lemma \ref{ushor} we get 
$$
\widetilde\Xi_\si^U\xi^{-1}\!=\widetilde\Xi_{\xi\bu\si}^U\subset\widetilde\Xi_U^U\,,
$$
showing that the latter set is not relatively compact. (Notice that $\d\big(\widetilde\Xi_\si^U \big)=\{\r(\xi^{-1})\}$, hence $\widetilde\Xi_\si^U\xi^{-1}$ is relatively compact if and only if $\widetilde\Xi_\si^U$ is relatively compact)
\end{proof}

\begin{cor}\label{prinurmare}
If at least one of the orbits is relatively compact (in particular if $\,\Si$ is compact), $\Si_{\rm nw}$ is non-void.
\end{cor}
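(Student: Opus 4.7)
The plan is to chain together two earlier results. First, I would invoke Proposition \ref{eusper}\,(iii): if some orbit $\mathfrak{O}_\si$ is relatively compact, then the limit set $\mathfrak{L}_\si$ is non-empty (this is the part of that proposition that asserts non-emptiness, which in turn uses that $\Xi$ is strongly non-compact so that the empty set cannot serve as a neighborhood of $\mathfrak{L}_\si$).

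Next, I would apply Proposition \ref{lalik}: the inclusion chain \eqref{ocompleta} gives
\[
\mathfrak{L}_\si \,\subset\, \bigcup_{\tau\in\Si}\mathfrak{L}_\tau \,\subset\, \Si_{\rm nw}\,.
\]
Combining these two facts, any element of $\mathfrak{L}_\si$ is a non-wandering point, so $\Si_{\rm nw}\neq\emptyset$. For the parenthetical remark, if $\Si$ itself is compact, then every orbit is relatively compact (being a subset of $\Si$), so the hypothesis is trivially satisfied.

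There is essentially no obstacle here; the corollary is a direct bookkeeping consequence of Propositions \ref{eusper} and \ref{lalik}. The only subtle point worth flagging is that the non-emptiness of $\mathfrak{L}_\si$ in Proposition \ref{eusper}\,(iii) genuinely requires the strongly non-compact standing assumption on $\Xi$, which is already in force throughout this subsection.
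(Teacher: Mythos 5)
Your argument is correct and is exactly the paper's own proof: the authors also deduce the corollary by combining the non-emptiness of $\mathfrak L_\si$ from Proposition \ref{eusper}\,(iii) with the inclusion $\mathfrak L_\si\subset\Si_{\rm nw}$ from \eqref{ocompleta}. Your added remark about the role of the strongly non-compact assumption is a correct and useful clarification, but the route is the same.
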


\begin{proof}
This follows from \eqref{ocompleta} and Proposition \ref{eusper}\,(iii).
\end{proof}

\begin{prop}\label{hayhuy}
If $\,\Si$ is compact, $\Si_{\rm nw}$ attracts the points of $\,\Si$\,: for every $\si\in\Si$ and for every neighborhood $V$ of $\,\Si_{\rm nw}$\,, one has $\xi\bu\si\in V$ for every $\xi\in\Xi_{\rho(\si)}$ outside some compact set.
\end{prop}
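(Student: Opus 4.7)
The plan is a short contradiction argument built on Lemma \ref{labil} and Proposition \ref{lalik}. Fix $\si\in\Si$ and an open neighborhood $V$ of $\Si_{\rm nw}$\,. Set
$$
B\!:=\big\{\xi\in\Xi_{\rho(\si)}\,\big\vert\,\xi\bu\si\notin V\big\}\,.
$$
The claim is equivalent to showing that $B$ is relatively compact in $\Xi_{\rho(\si)}$\,, since then we may take ${\sf k}:=\overline B$ and obtain $(\Xi_{\rho(\si)}\!\setminus\!{\sf k})\bu\si\subset V$.

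Suppose, for contradiction, that $B$ is not relatively compact. Then for every ${\sf k}\in\mathcal K(\Xi_{\rho(\si)})$ one can pick $\xi_{\sf k}\in B\setminus{\sf k}$\,, and the resulting net $(\xi_{\sf k})_{{\sf k}\in\mathcal K(\Xi_{\rho(\si)})}$ is divergent in $\Xi_{\rho(\si)}$ by construction. Now $(\xi_{\sf k}\bu\si)$ lies in the closed set $V^c\!\subset\Si$, which is compact since $\Si$ is. Passing to a convergent subnet, one obtains a limit $\tau\in V^c$. Subnets of divergent nets remain divergent (eventual escape from each compact is preserved), so the subnet still satisfies the hypothesis of Lemma \ref{labil}\,(iii) with limit $\tau$.

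Applying Lemma \ref{labil}, we get $\tau\in\mathfrak L_\si$\,. By Proposition \ref{lalik}, $\mathfrak L_\si\subset\Si_{\rm nw}\subset V$, so $\tau\in V$, contradicting $\tau\in V^c$. This contradiction proves that $B$ is relatively compact, which is the desired conclusion.

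The only point requiring mild care is the extraction of a divergent net and verifying that divergence survives under subnets; otherwise the argument uses the already-established machinery (compactness of closed subsets of $\Si$, the net characterization of limit points, and the containment $\mathfrak L_\si\subset\Si_{\rm nw}$) in a direct way. No openness or local compactness assumption on $\Xi$ is needed, since we never invoke invariance of $\Si_{\rm nw}$ or $\mathfrak L_\si$\,.
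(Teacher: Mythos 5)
Your argument is correct, but it takes a different route from the paper's. The paper's proof is a two-line reduction: since $V\supset\Si_{\rm nw}\supset\mathfrak L_\si$ by Proposition \ref{lalik}, and the orbit of $\si$ is relatively compact ($\Si$ being compact), Proposition \ref{eusper}\,(iii) directly supplies a compact ${\sf k}\subset\Xi_{\rho(\si)}$ with $\big(\Xi_{\rho(\si)}\!\setminus\!{\sf k}\big)\bu\si\subset V$. You bypass Proposition \ref{eusper}\,(iii) entirely and instead re-derive the needed attraction inline: your divergent-net contradiction, combined with Lemma \ref{labil}\,(iii) and the compactness of $V^c$ inside the compact space $\Si$\,, is in effect a net-theoretic replacement for the finite-subcover argument hidden inside the proof of Proposition \ref{eusper}\,(iii) (which covers the complement of $W\cap\overline{\mathfrak O}_\si$ in $\overline{\mathfrak O}_\si$ by the open sets $\Si_\si({\sf k})^c$). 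What you gain is a self-contained proof depending only on Lemma \ref{labil} and the inclusion $\mathfrak L_\si\subset\Si_{\rm nw}$\,; what the paper gains is brevity by reusing an already-established attractor property. Two small points of hygiene, neither of which is a gap: you should note that one may assume $V$ open (replace a general neighborhood by its interior, so that $V^c$ is closed and the limit of the subnet stays in $V^c$), and your verifications that the net $(\xi_{\sf k})_{{\sf k}\in\mathcal K(\Xi_{\rho(\si)})}$ is divergent and that divergence passes to subnets are both correct as stated.
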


\begin{proof}
One has to show that for every neighborhood $V$ of $\,\Si_{\rm nw}$\,, the complement in $\Xi_{\rho(\si)}$ of the set $\,\widetilde\Xi^V_\si$ is relatively compact. If $V$ is a neighborhood of $\Si_{\rm nw}$\,, by \eqref{ocompleta}, it is also a neighborhood of the limit set $\mathfrak L_\si$\,. One applies Proposition \ref{eusper}\,(iii) to infer that there exists a compact subset ${\sf k}$ of $\Xi_{\rho(\si)}$ such that $\big(\Xi_{\rho(\si)}\!\setminus\!{\sf k}\big)\bu\si\subset V$, i.\,\,e. $\Xi_{\rho(\si)}\!\setminus\!{\sf k}\subset \widetilde\Xi^V_\si$. Then the complement of $\widetilde\Xi^V_\si$ in $\Xi_{\rho(\si)}$ is contained in ${\sf k}$ and the proof is finished.
\end{proof}

\begin{ex}\label{aminte}
If $\Xi\!\ltimes_\th\!\Si$ is the action groupoid of the groupoid action $(\Xi,\th,\rho,\Si)$\,, for every $\si\in\Si$ the two limit sets that make sense are equal. The recurrent sets are also equal. These are easy consequences of the definitions. Formula \eqref{consequently} shows that if $\si$ is non-wandering for the action $\th$, then it is also non-wandering for the action groupoid $\Xi\!\ltimes_\th\!\Si$\,. If $\Si$ is compact, the non-wandering sets coincide.
\end{ex}

\begin{ex}\label{aducere}
We refer now to the pull-back construction of \ref{cabalier}, assuming that $\O$ is locally compact. We leave it to the reader to check the formula
\begin{equation*}\label{rellorb}
\mathfrak L^{\downarrow\downarrow}_{\o}=h^{-1}\big(\mathfrak L_{h(\o)}\big)\,,\quad\forall\,\o\in\O\,,
\end{equation*}
where the limit set in the l.\,h.\,s. is computed in $h^{\downarrow\downarrow}(\Xi)$\,, being a subset of $\O$\,. It follows easily that $\o\in\O$ is $h^{\downarrow\downarrow}(\Xi)$-recurrent if and only if $h(\o)\in X$ is $\Xi$-recurrent. Using \eqref{transfers}, one checks easily that $\o$ is $h^{\downarrow\downarrow}(\Xi)$-wandering if and only if $h(\o)$ is $\Xi$-wandering. 
\end{ex}

\section{Minimality and almost periodicity}\label{calorifier}

\subsection{Fixed points}\label{golorofer}

Let $(\Xi,\rho,\th,\Si)$ be a continuous groupoid action.

\begin{defn}\label{intaresc}
A {\it fixed point} is a point $\si\in\Si$ such that $\xi\bu\si=\si$ for every $\xi\in\Xi_{\rho(\si)}$\,. This is equivalent to $\widetilde\Xi_\si^\si=\Xi_{\rho(\si)}$\,. We write $\si\in\Si^\th_{\rm fix}\equiv\Si_{\rm fix}$\,.
\end{defn}

\begin{ex}\label{pustan}
For group actions, converted into groupoid actions as in Exemple \ref{bunul}, the notion of fixed point boils down to the usual one. The same can be said if the model is that of Example \ref{starlet}.
\end{ex}

\begin{ex}\label{puskan}
If $\,\underline\Si\subset\Si$ is an open set, then $\underline\Si^{\underline{\th}}_{\rm fix}\supset\underline{\Si}\cap\Si^\th_{\rm fix}$\,. We used the restriction notions from Subsection \ref{creasta}.
\end{ex}

\begin{ex}\label{beltita}
The point $y\in Y$ is said to be {\it a fixed point of the partial group action} of Definition \ref{iexel} if $\beta_a(y)=y$ for $a\in\G$ and $y\in Y_{a^{-1}}$\,. This happens if and only if $y$ is a fixed point with respect to the canonical action of the groupoid $\G\!\ltimes_{[\beta]}\!Y$.
\end{ex}

\begin{ex}\label{ingrid}
Let $\Xi(\nu)$ be the Deaconu-Renault groupoid attached to the local homeomorphism $\nu:X\to X$. The unit $y$ is a fixed point if and only if for every $x\ne y$ there are no positive integers $k,l$ with $\nu^k(x)=\nu^l(y)$\,. 
\end{ex}

\begin{ex}\label{partiszan}
An element $\o\in\O$ is a fixed point of the pull-back groupoid $h^{\downarrow\downarrow}(\Xi)$ introduced in subsection \ref{cabalier} if and only if $h(\o)$ is a fixed point of the canonical action of $\Xi$ on its unit space.
\end{ex}

\begin{prop}\label{zets}
The set $\Si_{\rm fix}$ is invariant. If $\,\Xi$ is open, $\Si_{\rm fix}$ is also closed.
\end{prop}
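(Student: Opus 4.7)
The invariance claim is essentially immediate from the definition. If $\si\in\Si_{\rm fix}$ and $\xi\in\Xi_{\rho(\si)}$, then by definition $\xi\bu\si=\si$, so the only element of $\mathfrak O_\si$ is $\si$ itself; hence $\xi\bu\si=\si\in\Si_{\rm fix}$ trivially. I would just spell this out in one line.

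For closedness under the openness assumption, the plan is to take a net $(\si_i)_{i\in I}\subset\Si_{\rm fix}$ with $\si_i\to\si$ and show that every $\xi\in\Xi_{\rho(\si)}$ satisfies $\xi\bu\si=\si$. The obstacle is that $\xi$ is not a priori composable with any $\si_i$, since $\d(\xi)=\rho(\si)$ but $\rho(\si_i)$ may differ. This is exactly the technical issue discussed in point (a) of the Introduction, and it is handled the same way as in the proof of Proposition \ref{eusper}\,(ii).

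Since $\Xi$ is open, the source map $\d$ is open and Fell's criterion \cite[Prop.\,1.1]{Wi} applies: as $\d(\xi)=\rho(\si)=\lim_i\rho(\si_i)$ (by continuity of $\rho$), there exist a subnet $(\si_{i_j})_{j\in J}$ and a net $(\xi_j)_{j\in J}\subset\Xi$ with $\xi_j\to\xi$ and $\d(\xi_j)=\rho(\si_{i_j})$. Then each $\xi_j\bu\si_{i_j}$ is defined, and because $\si_{i_j}\in\Si_{\rm fix}$, we have $\xi_j\bu\si_{i_j}=\si_{i_j}$. Passing to the limit using joint continuity of $\th$ on $\Xi\!\Join\!\Si$,
$$
\xi\bu\si=\lim_j\xi_j\bu\si_{i_j}=\lim_j\si_{i_j}=\si\,.
$$
Since $\xi\in\Xi_{\rho(\si)}$ was arbitrary, $\si\in\Si_{\rm fix}$, proving closedness.

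The main (only) subtle point is the Fell-type approximation; once that is in place the continuity argument is routine. It is worth noting in the write-up that openness cannot simply be dropped: Example \ref{valioso} (or a small variant) should exhibit a fixed-point set that fails to be closed, consistent with the general theme that the closure of an invariant set need not be invariant when $\Xi$ is not open.
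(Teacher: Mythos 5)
Your argument is correct and follows essentially the same route as the paper: invariance is immediate since a fixed point has singleton orbit, and closedness is obtained by taking a net of fixed points converging to $\si$ and using Fell's criterion for the open source map to produce composable approximants $\xi_j\to\xi$ with $\d(\xi_j)=\rho(\si_{i_j})$, then passing to the limit. One small correction to your closing aside: the paper's counterexamples for the necessity of openness are Examples \ref{maidevreme} and \ref{valioso4}, not Example \ref{valioso} (where $\Si_{\rm fix}$ is empty, hence closed).
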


\begin{proof}
If $\si\in\Si_{\rm fix}$, then $\mathfrak O_\si=\{\si\}$, which makes $\Si_{\rm fix}$ trivially invariant. 

\smallskip
If $\Xi$ is open and $\si\in\overline{\Si}_{\rm fix}$, then exists a net $(\si_i)_{i\in I}$ of fixed points converging to $\si$. Let $\xi\in\Xi_{\rho(\si)}$. By Fell's criterion \cite[Prop.\,1.1]{Wi}, applied to the open map $\d$ and the net $\rho(\si_{i})\in X$, there exists a net $(\xi_j)$ and a subnet $(\si_{i_j})$ such that $\xi_j\to\xi$ and $\rho(\si_{i_j})=\d(\xi_j)$\,. By continuity, we have 
$$
\xi\bu\si=(\lim_j \xi_j)\bu(\lim_j \si_{i_j})=\lim_j (\xi_j\bu\si_{i_j})=\lim_j \si_{i_j}=\si.
$$ 
So $\si$ is a fixed point for the action.
\end{proof}

Examples \ref{maidevreme} and \ref{valioso4} will illustrate that openness of $\Xi$ is important.

\subsection{Periodic and almost periodic points}\label{kalorifier}

Although it is not always necessary, in this subsection we prefer to assume that in the continuous action $(\Xi,\rho,\th,\Si)$ the groupoid $\Xi$ is strongly non-compact (all the $\d$-fibers are non-compact).

\begin{defn}\label{indrasniesc}
\begin{enumerate}
\item[(a)] Let $x\in X$. The subset ${\sf A}$ of $\,\Xi_x$ is called {\it syndetic} if ${\sf KA}=\Xi_x$ for a compact subset ${\sf K}$ of $\,\Xi$\,.
\item[(b)] We say that $\si\in\Si$ is {\it periodic}, and we write $\si\in\Si_{\rm per}$\,, if $\,\widetilde\Xi_\si^\si$ is syndetic in $\Xi_{\rho(\si)}$\,. 
\item[(c)] The point $\si$ is called \textit{weakly periodic} (we write $\si\in\Si_{\rm wper}$) if the subgroup $\widetilde\Xi_\si^\si$ is not compact. 
\item[(d)] The point $\si\in\Si$ is said to be {\it almost periodic} if $\,\widetilde\Xi_\si^U$ is syndetic in $\Xi_{\rho(\si)}$ for every neighborhood $U$ of $\si$ in $\Si$\,.
We denote by $\Si_{\rm alper}$ the set of all the almost periodic points. If $\Si_{\rm alper}=\Si$\,, the action is {\it pointwise almost periodic}.
\end{enumerate}
\end{defn}

\begin{ex}\label{maidevreme}
Consider the equivalence relation $\Pi$ introduced in Remark \ref{tutosh}, and form the groupoid product $\Xi=\Pi\times \Z$\,. By considering the action of $\Xi$ in $\Si=\R$\,, we see that 
$$
\Si_{\rm fix}=(-\infty, -1)\cup(1,\infty)\,,\quad \Si_{\rm per}=\Si_{\rm wper}=\Si_{\rm alper}=\R\,,
$$ 
which also illustrates that, in general, the set $\Si_{\rm fix}$ is not closed. 
\end{ex}

\begin{ex}\label{sinfin}
We fix a unit $y\in X$ of the Deaconu-Renault groupoid of the local homeomorphism $\nu:X\to X$. We remarked that \eqref{surjdr} becomes a bijection for singletons. Then $y$ is periodic if and only if it is weakly periodic, and this happens exactly when $\Z_\nu(x,x)$ (a subgroup of $\Z$) does not coincide with $\{0\}$ (then it will be both infinite and syndetic). This means that for some positive integers $k\ne l$ one has $\nu^k(x)=\nu^l(x)$\,.
\end{ex}

\begin{ex}\label{constantinescu}
Let $\Xi[\beta]$ be the groupoid associated to the partial action $\beta$ of the discrete group $\G$ on the topological space $Y$. After \eqref{debilla}, we established an identification between $\Xi[\beta]_y^N$ and ${\rm Rec}_{[\beta]}(y,N)$ for every point $y\in Y$ and subset $N\subset Y$. This allows rephrasing the periodicity properties in this case. For example, $y$ is periodic in the groupoid $\Xi[\beta]$ if and only if the subgroup of $\G$ composed of all the elements "defined at $y$ and leaving it invariant" is syndetic (which is equivalent to having finite index in $\G$).
\end{ex}

\begin{prop}\label{minwan}
Let $\Xi$ be a strongly non-compact groupoid. Then one has
\begin{equation}\label{pastish}
\Si_{\rm fix}\overset{(1)}{\subset}\Si_{\rm per}\overset{(2)}{\subset}\Si_{\rm wper}\cap\Si_{\rm alper}\overset{(3)}{\subset}\Si_{\rm wper}\cup\Si_{\rm alper}\overset{(4)}{\subset}\Si_{\rm rec}\overset{(5)}{\subset}\bigcup_{\si\in\Si}\mathfrak L_\si\overset{(6)}{\subset}\Si_{\rm nw}\,.
\end{equation} 
\end{prop}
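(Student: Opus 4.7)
The plan is to prove the chain \eqref{pastish} inclusion by inclusion. The common technical ingredient I will use repeatedly is the following simple observation: if ${\sf K},{\sf L}\subset\Xi$ are compact, then ${\sf K}{\sf L}$ is compact as well, since it is the image under the continuous multiplication of the closed (because $\Xi$ is Hausdorff) subset $({\sf K}\!\times\!{\sf L})\cap\Xi^{(2)}$ of the compact space ${\sf K}\!\times\!{\sf L}$. Combined with the strong non-compactness of each fibre $\Xi_{\rho(\si)}$, this will let me convert syndeticity statements into non-compactness statements. I also rely on the fact, recorded in Remark \ref{iverse}, that the stabilizer $\widetilde\Xi_\si^\si$ is a closed subgroup of $\Xi_{\rho(\si)}^{\rho(\si)}$, hence closed in $\Xi$ (as the intersection $\d^{-1}(\rho(\si))\cap\r^{-1}(\rho(\si))$ of closed sets).

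For inclusion $(1)$, a fixed point $\si$ satisfies $\widetilde\Xi_\si^\si=\Xi_{\rho(\si)}$, which is trivially syndetic in itself (take ${\sf K}=\{\rho(\si)\}$). For inclusion $(2)$, suppose $\si\in\Si_{\rm per}$, so that ${\sf K}\widetilde\Xi_\si^\si=\Xi_{\rho(\si)}$ for some compact ${\sf K}\subset\Xi$. If $\widetilde\Xi_\si^\si$ were compact, the compactness lemma would force $\Xi_{\rho(\si)}$ to be compact, contradicting the strong non-compactness assumption; hence $\si\in\Si_{\rm wper}$. On the other hand, any open neighborhood $U$ of $\si$ satisfies $\widetilde\Xi_\si^\si\subset\widetilde\Xi_\si^U$, so ${\sf K}\widetilde\Xi_\si^U=\Xi_{\rho(\si)}$ as well, and $\si\in\Si_{\rm alper}$. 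Inclusion $(3)$ is immediate.

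For inclusion $(4)$, I argue through the equivalent characterization $(e)$ in Proposition \ref{recur}: $\si\in\Si_{\rm rec}$ iff $\widetilde\Xi_\si^U$ fails to be relatively compact for every open neighborhood $U$ of $\si$. If $\si\in\Si_{\rm wper}$, then $\widetilde\Xi_\si^\si$ is closed and non-compact, hence not relatively compact; since it sits inside $\widetilde\Xi_\si^U$, neither is $\widetilde\Xi_\si^U$. If instead $\si\in\Si_{\rm alper}$ and $\widetilde\Xi_\si^U$ were contained in some compact ${\sf L}\subset\Xi$, the compactness lemma applied to $\Xi_{\rho(\si)}={\sf K}\widetilde\Xi_\si^U\subset{\sf K}{\sf L}$ would again contradict strong non-compactness. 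Either way $\si\in\Si_{\rm rec}$. Finally, $(5)$ is immediate from Definition \ref{dransver}, and $(6)$ is exactly Proposition \ref{lalik}.

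The proof is essentially routine once the compactness lemma is isolated; the only mild subtlety is making sure one uses the correct characterization of recurrence (item $(e)$ of Proposition \ref{recur}, which does not require $\Xi$ to be open or locally compact), and recalling that stabilizers are closed, so that "not compact" and "not relatively compact" coincide for them.
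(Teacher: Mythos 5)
Your proof is correct and follows essentially the same route as the paper's (which merely sketches these steps): inclusions $(1)$ and $(3)$ by inspection, $(2)$ via $\widetilde\Xi_\si^\si\subset\widetilde\Xi_\si^U$ together with the fact that a syndetic subset of a non-compact fibre cannot be (relatively) compact, $(4)$ via characterization $(e)$ of Proposition \ref{recur}, and $(5)$, $(6)$ from Definition \ref{dransver} and Proposition \ref{lalik}. The only spot deserving one more line is inclusion $(1)$: the witness ${\sf K}=\{\rho(\si)\}$ works not ``trivially'' but because a fixed point satisfies $\r(\xi)=\rho(\xi\bu\si)=\rho(\si)$ for every $\xi\in\Xi_{\rho(\si)}$, so the unit $\rho(\si)$ is left-composable with the whole fibre; for a general fibre $\Xi_x$ one has $\{x\}\,\Xi_x=\Xi_x^x$, and syndeticity of $\Xi_x$ in itself would actually require the orbit $\mathcal O_x$ to be relatively compact.
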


\begin{proof} 
The inclusions (1) and (3) are obvious. We proved (5) and (6) previously, in Proposition \ref{lalik}.

\smallskip
To deduce (2) from the definitions, note that $\widetilde\Xi_\si^\si\subset\widetilde\Xi_\si^U$ if $\si\subset U$ and that a syndetic set is not compact (since the $\d$-fibers are not compact).

\smallskip
The inclusion (4) also follows easily from the definitions, by the same type of arguments: use Proposition \ref{recur}\,(e) to describe recurrent points.
\end{proof}

\begin{ex}\label{partsim}
The situation is particularly simple for equivalence relations, outlined in subsection \ref{castandyu}, especially because of equation \eqref{uncazz}. In particular, for $y\in X\equiv\Si$ one gets 
$$
\widetilde\Xi_y\cong\{x\in X\!\mid\!x\,\Pi\,y\}\quad{\rm and}\quad \widetilde\Xi_y^y=\{(y,y)\}\cong\{y\}\,.
$$
To insure that the associated groupoid is strongly non-compact, we require that for every $y\in X$ the set $\{x\in X\!\mid\!x\,\Pi\,y\}$ is non-compact. Then $\Si_{\rm wper}=\emptyset$ (so there are no fixed points or periodic points). With some abuse of notation and interpretation, $y$ will be almost periodic if and only if $\{x\in U\!\mid\!x\,\Pi\,y\}$ is syndetic in $\{x\in X\!\mid\!x\,\Pi\,y\}$ for every neighborhood $U$ of $y$\,. If $X$ is locally compact, one could choose a relatively compact neighborhood and syndeticity contradicts the fact that the fiber in $y$ is non-compact. Therefore, in the locally compact strongly non-compact case, equivalent relations do not exhibit almost periodic points.

\smallskip
If the equivalence relation is not forced to lead to a strongly non-compact groupoid, the situation might be very different. In particular, some of the inclusions in \eqref{pastish} no longer hold. We leave this to the reader.
\end{ex}

\begin{ex}\label{portiszan}
Denoting by $\beta$ any of the properties "periodic", "weakly periodic" and "almost periodic", it is easy to check that $\o\in\O$ has $\beta$ in the pull-back $h^{\downarrow\downarrow}(\Xi)$ if and only if $h(\o)\in X$ has $\beta$ in $\Xi$\,. This happens mostly because of equality \ref{transfers}.
\end{ex}

\begin{prop}\label{sets}
The sets $\Si_{\rm per},\Si_{\rm wper}$ are invariant. 
\end{prop}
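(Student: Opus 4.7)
The plan is to reduce both invariances to the conjugation identity provided by Lemma \ref{ushor}: applied with $M=N=\{\si\}$ and $\eta_1=\eta_2=\eta\in\Xi_{\rho(\si)}$, it yields
\[
\widetilde\Xi_{\eta\bu\si}^{\eta\bu\si}=\eta\,\widetilde\Xi_\si^\si\,\eta^{-1}.
\]
My first step would be to note that the conjugation $\Phi_\eta:\zeta\mapsto\eta\zeta\eta^{-1}$ is a homeomorphism between the isotropy groups $\Xi_{\rho(\si)}^{\rho(\si)}$ and $\Xi_{\r(\eta)}^{\r(\eta)}=\Xi_{\rho(\eta\bu\si)}^{\rho(\eta\bu\si)}$, being continuous by joint continuity of multiplication and inversion, and having continuous inverse $\Phi_{\eta^{-1}}$. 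Since $\Si_{\rm per}$ and $\Si_{\rm wper}$ are defined purely in terms of the stabilizer, invariance reduces to checking that the two relevant properties of $\widetilde\Xi_\si^\si$ are preserved by $\Phi_\eta$.

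Invariance of $\Si_{\rm wper}$ is then immediate: a homeomorphism sends non-compact sets to non-compact sets, so non-compactness of $\widetilde\Xi_\si^\si$ forces non-compactness of $\widetilde\Xi_{\eta\bu\si}^{\eta\bu\si}$.

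For $\Si_{\rm per}$, suppose ${\sf K}\widetilde\Xi_\si^\si=\Xi_{\rho(\si)}$ for some compact ${\sf K}\subset\Xi$. I would propose the candidate
\[
{\sf K}':=\{\kappa\eta^{-1}\mid\kappa\in{\sf K}\cap\d^{-1}(\rho(\si))\},
\]
compact as the continuous image of the compact set ${\sf K}\cap\d^{-1}(\rho(\si))$ under right multiplication by $\eta^{-1}$. To verify that ${\sf K}'\widetilde\Xi_{\eta\bu\si}^{\eta\bu\si}=\Xi_{\rho(\eta\bu\si)}=\Xi_{\r(\eta)}$, given $\xi\in\Xi_{\r(\eta)}$ I would observe that $\xi\eta\in\Xi_{\rho(\si)}$ factors as $\kappa\zeta$ with $\kappa\in{\sf K}$, $\d(\kappa)=\rho(\si)$, and $\zeta\in\widetilde\Xi_\si^\si$; right-multiplying by $\eta^{-1}$ and inserting the unit $\eta^{-1}\eta$ yields
\[
\xi=\kappa\zeta\eta^{-1}=(\kappa\eta^{-1})(\eta\zeta\eta^{-1})\in{\sf K}'\,\widetilde\Xi_{\eta\bu\si}^{\eta\bu\si}.
\]

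The only real obstacle is bookkeeping the source/range compatibilities that make each groupoid product legal; conceptually the argument is just transport of structure along $\Phi_\eta$, and no openness or local compactness assumption on $\Xi$ is needed.
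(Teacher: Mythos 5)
Your argument is correct and coincides with the paper's: both parts rest on the conjugation identity $\widetilde\Xi_{\eta\bu\si}^{\eta\bu\si}=\eta\,\widetilde\Xi_\si^\si\,\eta^{-1}$ from Lemma \ref{ushor}, and your set ${\sf K}'$ is exactly the paper's ${\sf K}\eta^{-1}$ in the product notation \eqref{botations}. You merely spell out in more detail the compactness of ${\sf K}'$ and the factorization $\xi=(\kappa\eta^{-1})(\eta\zeta\eta^{-1})$, which the paper compresses into the single computation $({\sf K}\xi^{-1})\widetilde\Xi_{\xi\bu\si}^{\xi\bu\si}=\Xi_{\rho(\xi\bu\si)}$.
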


\begin{proof}
By Lemma \ref{ushor}, one has $\widetilde\Xi_{\xi\bu\si}^{\xi\bu\si}=\xi\,\widetilde\Xi_\si^\si\xi^{-1}$, so $\Si_{\rm wper}$ is invariant. 

\smallskip
We focus now on $\Si_{\rm per}$\,; let $\si\in\Si_{\rm per}$ and $\xi\in\Xi_{\rho(\si)}$\,. For some compact set ${\sf K}$\,, we have 
$$
{\sf K}\,\xi^{-1}\widetilde\Xi_{\xi\bu\si}^{\xi\bu\si}\,\xi={\sf K}\,\widetilde\Xi_\si^\si=\Xi_{\rho(\si)}=\Xi_{\rho(\xi\bu\si)}\xi\,.
$$ 
Hence
$$
({\sf K}\xi^{-1})\widetilde\Xi_{\xi\bu\si}^{\xi\bu\si}=\Xi_{\rho(\xi\bu\si)}\,,
$$ 
meaning that $\widetilde\Xi_{\xi\bu\si}^{\xi\bu\si}$ is syndetic in $\Xi_{\rho(\xi\bu\si)}$\,, and thus $\xi\bu\si\in \Si_{\rm per}$ holds.
\end{proof}

The sets of periodic, weakly periodic or almost periodic points might fail to be closed, even for group actions.

\subsection{Compact orbits}\label{coproduct}

We connect now periodicity with the notion of a periodicoid point, introduced in \cite[Section 3.1]{BBdN} in a more restricted context.

\begin{prop}\label{lacidraci}
Every periodic point $\si$ has a compact orbit. 
\end{prop}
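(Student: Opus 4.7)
The strategy is to use the syndeticity of the stabilizer to write the whole $\mathsf{d}$-fibre $\Xi_{\rho(\sigma)}$ as a compact collection of translates of the stabilizer, and then observe that each such translate produces a \emph{single} point of the orbit. Thus $\mathfrak O_\sigma$ will be exhibited as the continuous image of a compact set.

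Unwinding Definition \ref{indrasniesc}, periodicity of $\sigma$ means that there is a compact $\mathsf K\subset\Xi$ such that $\mathsf K\,\widetilde\Xi_\sigma^\sigma=\Xi_{\rho(\sigma)}$. Since $\widetilde\Xi_\sigma^\sigma\subset\Xi_{\rho(\sigma)}^{\rho(\sigma)}$ (Remark \ref{iverse}), any product $\kappa\tau$ with $\tau\in\widetilde\Xi_\sigma^\sigma$ requires $\mathsf{d}(\kappa)=\mathsf{r}(\tau)=\rho(\sigma)$. Hence only $\mathsf K_0:=\mathsf K\cap\Xi_{\rho(\sigma)}$ contributes, and $\mathsf K_0\,\widetilde\Xi_\sigma^\sigma=\Xi_{\rho(\sigma)}$. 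Because $\Xi$ is Hausdorff, the fibre $\Xi_{\rho(\sigma)}=\mathsf{d}^{-1}(\rho(\sigma))$ is closed, so $\mathsf K_0$ is closed in $\mathsf K$ and therefore compact.

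Next, for any $\eta\in\Xi_{\rho(\sigma)}$, write $\eta=\kappa\tau$ with $\kappa\in\mathsf K_0$ and $\tau\in\widetilde\Xi_\sigma^\sigma$. Using axiom (2) of Definition \ref{grupact} together with $\tau\bullet\sigma=\sigma$, one gets
\[
\eta\bullet\sigma=(\kappa\tau)\bullet\sigma=\kappa\bullet(\tau\bullet\sigma)=\kappa\bullet\sigma.
\]
Consequently $\mathfrak O_\sigma=\Xi_{\rho(\sigma)}\bullet\sigma=\mathsf K_0\bullet\sigma$. The map $\Xi_{\rho(\sigma)}\to\Sigma$, $\xi\mapsto\xi\bullet\sigma$, is the restriction of the continuous action map $\theta$ to the closed subset $\Xi_{\rho(\sigma)}\times\{\sigma\}\subset\Xi\!\Join\!\Sigma$, so it is continuous; the image of the compact set $\mathsf K_0$ under it is therefore compact, and $\mathfrak O_\sigma$ is compact.

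There is essentially no obstacle beyond the bookkeeping: one only has to verify that the compositions defining the syndetic factorisation are indeed well-posed (which is automatic because $\widetilde\Xi_\sigma^\sigma$ sits in the isotropy group at $\rho(\sigma)$) and that intersecting the syndetic compact set with the fibre preserves compactness (which uses only that $X$ is Hausdorff).
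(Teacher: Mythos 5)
Your proof is correct and follows essentially the same route as the paper's second proof of this proposition: both exhibit $\mathfrak O_\si$ as the image of the compact syndeticity set under the continuous map $\xi\mapsto\xi\bu\si$ on $\Xi_{\rho(\si)}$. You additionally spell out the small point that only ${\sf K}\cap\Xi_{\rho(\si)}$ (still compact, since the fibre is closed) actually contributes to the products ${\sf K}\,\widetilde\Xi_\si^\si$, which the paper leaves implicit when it writes $\alpha^\si({\sf K})$.
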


\begin{proof}
If $\si\in\Si_{\rm per}$\,, then ${\sf K}\widetilde\Xi_\si^\si=\Xi_{\rho(\si)}$ for some compact set ${\sf K}\subset \Xi$\,. Any net $(\si_i)\subset \mathfrak O_\si$ can be written as $\si_i=(\xi'_i\xi_i)\bu\si$ for some nets $(\xi'_i)\subset{\sf K} $\,, $(\xi_i)\subset\widetilde\Xi_\si^\si$\,. By compactness, extract a subnet $(\xi'_j)\subset (\xi'_i) $ such that $\xi'_j\to \xi'\!\in {\sf K}$ and get 
$$
\si_j\!:=(\xi'_j\xi_j)\bu\si=\xi'_j\bu(\xi_j\bu\si)=\xi'_j\bu\si\to\xi\bu\si\in\mathfrak O_\si\,.
$$ 
We conclude that $\mathfrak O_\si$ is compact.
\end{proof}

\begin{proof}
Let us indicate a second proof, using a construction that will also be useful for Proposition \ref{dracilaci}. For $\si\in\Si$\,, let us define the continuous surjective function
\begin{equation*}\label{neosofist}
\alpha^\si\!:\Xi_{\rho(\si)}\!\to\mathfrak O_\si\subset\Si\,,\quad\alpha^\si(\xi):=\th_\xi(\si)\equiv\xi\bu\si.
\end{equation*}
If $\si\in\Si_{\rm per}$\,, then ${\sf K}\widetilde\Xi_\si^\si=\Xi_{\rho(\si)}$ for some compact set ${\sf K}\subset \Xi$\,. By using the definition of $\widetilde\Xi_\si^\si$\,, one gets
$$
\mathfrak O_\si=\alpha^\si\big(\Xi_{\rho(\si)}\big)=\alpha^\si({\sf K})\,,
$$
which is compact, as a direct continuous image of a compact set.
\end{proof}

For deriving a converse proposition, we will use a well known lemma (with proof, for the convenience of the reader):

\begin{lem}\label{groups}
Let $\,\Xi=\G$ be a locally compact, second countable group acting on a topological space $\Si$. If $\si\in\Si$ has a compact orbit, then $\widetilde{\G}_\si^\si\equiv{\rm Rec}(\si,\si)$ (see subsection \ref{castyanu}) is syndetic in $\G$\,.
\end{lem}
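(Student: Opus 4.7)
The plan is as follows. Set $H:=\widetilde\G_\si^\si=\{g\in\G\,\mid\,g\bu\si=\si\}$, the stabilizer of $\si$, which is a closed subgroup of $\G$ since the orbit map $g\mapsto g\bu\si$ is continuous and $\Si$ is Hausdorff. The goal is to produce a compact set $K\subset\G$ with $\G=KH$. The whole argument hinges on the preliminary claim: \emph{there exists a compact neighborhood $V$ of the identity $e\in\G$ such that $V\bu\si$ is a neighborhood of $\si$ in the orbit $\G\bu\si$} (equipped with its subspace topology from $\Si$).

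Granting the claim, the rest is a covering argument. Since $\tau\mapsto g\bu\tau$ is a homeomorphism of $\Si$ for every $g\in\G$, the translate $gV\bu\si=g\bu(V\bu\si)$ is a neighborhood of $g\bu\si$ in $\G\bu\si$; hence $\{(gV\bu\si)^\circ\}_{g\in\G}$ is an open cover of the compact space $\G\bu\si$. Extracting a finite subcover produces $g_1,\dots,g_n\in\G$ with $\G\bu\si=\bigcup_{i=1}^n g_iV\bu\si$, and the compact set $K:=\bigcup_{i=1}^n g_iV$ then satisfies $K\bu\si=\G\bu\si$. For any $g\in\G$, pick $k\in K$ with $k\bu\si=g\bu\si$; then $k^{-1}g\in H$ and so $g\in KH$. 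Hence $\G=KH$, as required.

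The main obstacle is the preliminary claim itself, which I would establish by Baire category. Pick a compact symmetric neighborhood $W$ of $e$ in $\G$ and form the $\sigma$-compact open (hence clopen) subgroup $\G_0:=\bigcup_{n\ge1}W^n$. Then $\G_0\bu\si=\bigcup_n W^n\bu\si$ is a countable union of compact subsets of the compact Hausdorff space $\G\bu\si$, which is Baire. After reducing to a single $\G_0$-orbit --- using the compactness of $\G\bu\si$ to control the decomposition into $\G_0$-orbits --- the Baire category theorem forces some $W^n\bu\si$ to have non-empty interior in $\G\bu\si$. Transferring an interior point $w\bu\si\in(W^n\bu\si)^\circ$ (with $w\in W^n$) back to $\si$ via the homeomorphism $\tau\mapsto w^{-1}\bu\tau$, and using $W^{-1}=W$, one concludes $\si$ lies in the interior of $W^{-n}W^n\bu\si=W^{2n}\bu\si$; setting $V:=W^{2n}$ yields the required compact neighborhood of $e$. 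The delicate issue is the reduction to a single $\G_0$-orbit when $\G$ is not $\sigma$-compact; in the $\sigma$-compact case one simply takes $\G_0=\G$, and the Baire step becomes routine.
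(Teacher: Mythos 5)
Your covering argument is precisely the one the paper uses: cover the compact orbit by translates of a suitable set, extract a finite subcover, take ${\sf K}$ to be the union of finitely many compact translates, and convert $\G\bu\si={\sf K}\bu\si$ into $\G={\sf K}\,\widetilde\G_\si^\si$. The difference is that you correctly isolate the crux, namely your preliminary claim that $V\bu\si$ is a neighborhood of $\si$ \emph{in the orbit}. The paper's proof simply asserts that ``as we have a group action, the set ${\sf N}_g\bu\si$ is open,'' which is unjustified: it even sits uneasily with the paper's own Remark \ref{caofi}, where openness of ${\sf A}M$ is claimed only when $M$ is open, whereas here $M=\{\si\}$. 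Openness of the orbit map onto its image is an open-mapping-theorem statement and needs Baire category plus a countability hypothesis, exactly as you suspect. So your write-up is more honest than the source on this point, and your Baire step is correct and complete whenever $\G$ is $\sigma$-compact (take $\G_0=\G$, so that the compact Hausdorff orbit is a countable union of sets $W^n\bu\si$ and one of them has interior).

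However, the ``delicate issue'' you flag for non-$\sigma$-compact $\G$ is not merely delicate; it is fatal, because the lemma is false in the stated generality. Let $\G$ be $\R$ equipped with the \emph{discrete} topology (a locally compact group) acting on $\Si=\T=\R/\Z$ by rotations. The action is continuous, the orbit of $0$ is all of $\T$, hence compact, and the stabilizer is $\Z$; but compact subsets of a discrete group are finite, and ${\sf K}+\Z\ne\R$ for every finite ${\sf K}$, so the stabilizer is not syndetic. In this example your reduction to a single $\G_0$-orbit genuinely cannot be carried out: $\T$ decomposes into uncountably many countable (hence meager) $\G_0$-orbits, and no $W^n\bu\si$ has interior. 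The same example breaks the paper's proof at the asserted openness of ${\sf N}_g\bu\si$. The correct fix is to add $\sigma$-compactness (or second countability) of $\G$ as a hypothesis; this is harmless for the paper, since the lemma is only invoked in Proposition \ref{dracilaci}, where $\Xi$ is assumed locally compact and second countable, so the isotropy group acting there is indeed $\sigma$-compact and your argument then closes the gap.
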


\begin{proof}
Let ${\sf N}_g\subset \G$ be a relatively compact, open neighborhood of $g$\,, for every $g\in \G$\,. Observe that 
$$
\G=\G\,\widetilde\G_\si^\si= \Big(\bigcup_{g\in\G} {\sf N}_g\Big)\,\widetilde\G_\si^\si.
$$ 
So 
$$
\mathfrak O_\si=\G\bu\si=\Big(\bigcup_{g\in\G} {\sf N}_g\Big)\widetilde\G_\si^\si\bu\si= \bigcup_{g\in\G}{\sf N}_g\bu\si.
$$ 
Let us, for the sake of the argument, use (and prove it later) that the set ${\sf N}_g\bu\si$ is a neighborhood of $\si$. By compactness we can extract a finite index set ${\sf F}=\{g_1,\ldots,g_n\}$ such that 
$$
\mathfrak O_\si=\bigcup_{i=1}^n{\sf N}_{g_i}\!\bu\si\subset \Big(\bigcup_{i=1}^n\overline{\sf N}_{g_i}\Big)\bu \si.
$$ 
Define ${\sf K}=\bigcup_{i=1}^n\overline{\sf N}_{g_i}$ and notice that for every $g\bu\si\in\mathfrak O_\si$, there  exists $h\in {\sf K}$ such that 
$$
g\bu\si=h\bu\si\Rightarrow (h^{-1}g)\bu\si=\si,
$$ 
meaning that $h^{-1}g\in\widetilde{\G}_\si^\si$ and $g\in {\sf K}\widetilde{\G}_\si^\si$. As ${\sf K}$ is compact, we conclude that $\widetilde{\G}_\si^\si$ is syndetic in $\G$. 

\smallskip
Now, we fill the remaining gap: Pick $W\subset \G$, another relatively compact, open neighborhood of $g$ but satisfying $\overline{W}\subset \sf N_g$. As $\G$ is second countable, it has the Lindelöf property, so exists a countable subset $\sf C\subset \G$ making $\G=\sf CW$ true. If ${\sf N}_g\bu\si$ had void interior, so would $c\bu\big[W\bu\si\big]=\big[cW\big]\bu\si$. But then we could decompose $\mathfrak O_\si$ as a countable union of closed nowhere dense sets: $$\mathfrak O_\si=\G\bu\si=\sf CW\bu\si=\bigcup_{c\in{\sf C}} \big[c\overline W\big]\bu\si, $$ contradicting Baire's category theorem.
\end{proof} 

\begin{prop}\label{dracilaci}
Assume that $\Xi$ is locally compact, second countable and open. If $\si\in\Si$ has a compact orbit, then it is a periodic point.
\end{prop}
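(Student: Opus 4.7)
My plan is to adapt the proof of Lemma \ref{groups} to the groupoid setting, using the continuous surjection $\alpha^\si:\Xi_{\rho(\si)}\to\mathfrak O_\si$\,, $\alpha^\si(\xi):=\xi\bu\si$\,, introduced in the second proof of Proposition \ref{lacidraci}. The key observation carried over from that proof is that ${\sf K}\,\widetilde\Xi_\si^\si=\Xi_{\rho(\si)}$ precisely when $\alpha^\si({\sf K})=\mathfrak O_\si$\,, so it suffices to exhibit a compact ${\sf K}\subset\Xi_{\rho(\si)}$ mapping onto the whole orbit: for every $\xi\in\Xi_{\rho(\si)}$ one then finds $k\in{\sf K}$ with $k\bu\si=\xi\bu\si$\,, whence $k^{-1}\xi\in\widetilde\Xi_\si^\si$ and $\xi\in{\sf K}\,\widetilde\Xi_\si^\si$\,, making $\widetilde\Xi_\si^\si$ syndetic in $\Xi_{\rho(\si)}$\,, i.e., $\si$ periodic.

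The natural route is to cover $\Xi_{\rho(\si)}$ by relatively compact open neighborhoods $\{{\sf N}_\xi\}_{\xi\in\Xi_{\rho(\si)}}$ (possible by local compactness), pass to the images $\alpha^\si({\sf N}_\xi\cap\Xi_{\rho(\si)})$\,, extract a finite subcover of the compact space $\mathfrak O_\si$\,, and take ${\sf K}:=\bigcup_{i=1}^n\overline{{\sf N}_{\xi_i}}$\,, which is a finite union of compact sets. For this strategy to go through, one needs the images $\alpha^\si({\sf N}_\xi\cap\Xi_{\rho(\si)})$ to be open in $\mathfrak O_\si$\,; equivalently, $\alpha^\si$ must be an open map onto $\mathfrak O_\si$\,. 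This is precisely where all three hypotheses on $\Xi$ intervene.

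The hard part is establishing the openness of $\alpha^\si$\,, since Remark \ref{caofi} warns that the translate $\xi\bu U$ of an open subset $U\subset\Si$ may fail to be open even when $\Xi$ itself is open, so the issue is not cosmetic. My approach would be to combine local compactness and second countability to obtain a compact exhaustion $\Xi_{\rho(\si)}=\bigcup_n{\sf k}_n$ as in Lemma \ref{technicallemma}, writing $\mathfrak O_\si=\bigcup_n\alpha^\si({\sf k}_n)$ as a countable union of compact (hence closed) subsets of the Baire space $\mathfrak O_\si$\,, and then invoking the Baire category theorem to force some $\alpha^\si({\sf k}_N)$ to have non-empty interior in $\mathfrak O_\si$\,. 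Because the right action of the closed subgroup $\widetilde\Xi_\si^\si\subset\Xi_{\rho(\si)}^{\rho(\si)}$ on the Polish space $\Xi_{\rho(\si)}$ is by homeomorphisms, the quotient map $\Xi_{\rho(\si)}\to\Xi_{\rho(\si)}/\widetilde\Xi_\si^\si$ is open; an Effros-type argument that uses openness of $\Xi$ to propagate this interior along the orbit then shows that the induced continuous bijection $\Xi_{\rho(\si)}/\widetilde\Xi_\si^\si\to\mathfrak O_\si$ is a homeomorphism. Openness of $\alpha^\si$ follows at once, and the compactness argument of the previous paragraph then produces the required ${\sf K}$ and concludes the proof.
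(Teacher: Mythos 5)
Your reduction of periodicity to the existence of a compact ${\sf K}\subset\Xi_{\rho(\si)}$ with $\alpha^\si({\sf K})=\mathfrak O_\si$, and the finite-cover argument producing such a ${\sf K}$ once $\alpha^\si$ is known to be an open map onto the orbit, are both correct and coincide with what the paper does in its second stage. The gap is in the step you leave as an assertion: the ``Effros-type argument that uses openness of $\Xi$ to propagate this interior along the orbit''. That is not a routine verification but the entire technical content of the proposition. Your Baire step is fine ($\mathfrak O_\si$ is compact Hausdorff, hence Baire, and the compact exhaustion of $\Xi_{\rho(\si)}$ does follow from local compactness and second countability of $\Xi$), but the propagation fails as sketched: for a groupoid action the translation $\tau\mapsto\zeta\bu\tau$ is a homeomorphism only between the fibers $\rho^{-1}(\d(\zeta))$ and $\rho^{-1}(\r(\zeta))$, so knowing that some $\eta\bu\si$ lies in the interior of $\alpha^\si({\sf k}_N)$ \emph{relative to} $\mathfrak O_\si$ does not transport to another point $\xi\bu\si$ by left translation, which only controls interiors relative to $\mathfrak O_\si\cap\rho^{-1}(\r(\xi))$. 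The Effros/Mackey--Glimm--Ramsay theorem the paper invokes from \cite{Wi} resolves exactly this difficulty, but it is stated for the canonical action of a second countable locally compact groupoid on its unit space; to apply it directly to $\alpha^\si$ you would pass to the action groupoid $\Xi\ltimes_\th\Si$, which is locally compact and second countable only if $\Si$ is --- hypotheses the proposition does not grant. The paper explicitly notes that this one-step proof exists but ``requires $\Si$ to be locally compact and second countable, which we succeeded to avoid''.

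The paper's actual route circumvents the obstruction in two stages: it first applies the Williams/Ramsay openness result to the canonical action on the unit space, using that $\mathcal O_{\rho(\si)}=\rho(\mathfrak O_\si)$ is compact (Lemma \ref{lavila}), to obtain a decomposition $\Xi_{\rho(\si)}={\sf K}_1\,\Xi_{\rho(\si)}^{\rho(\si)}$ with ${\sf K}_1$ compact; it then observes that the isotropy \emph{group} $\Xi_{\rho(\si)}^{\rho(\si)}$ acts as an ordinary locally compact group on the compact set $\mathfrak O_\si\cap\rho^{-1}(\rho(\si))$, so the classical Lemma \ref{groups} gives $\Xi_{\rho(\si)}^{\rho(\si)}={\sf K}_2\,\widetilde\Xi_\si^\si$, and the two decompositions compose to ${\sf K}_1{\sf K}_2\,\widetilde\Xi_\si^\si=\Xi_{\rho(\si)}$. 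To salvage your direct approach you would have to either add hypotheses on $\Si$ or supply a complete groupoid Effros argument valid for an arbitrary Hausdorff $\Si$; as written, the key openness claim for $\alpha^\si$ is unproved.
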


\begin{proof}
Let us first treat the case of the canonical action of $\Xi$ in its unit space $X$ (and notice that $X$, being closed in $\Xi$\,, is a locally compact, second countable space by its own). We will use a small amount of information from \cite[Sect.\,22]{Wi}, treating the Mackey-Glimm-Ramsay Dichotomy for groupoids; see also \cite{Ra}. For $x\in X$, let us define the continuous surjective function
\begin{equation*}\label{nesofist}
\alpha^x\!:\Xi_{x}\!\to\mathcal O_x\subset X\,,\quad\alpha^x(\xi):=\xi\circ x\,.
\end{equation*}
One has $\alpha^x(\xi)=\alpha^x(\eta)$ if and only if $\xi^{-1}\eta\in\Xi_x^x$\,. This leads to a continuous bijection 
\begin{equation}\label{homeomorphism}
\widetilde\alpha^x\!:\Xi_{x}/\,\Xi^x_x\to\mathcal O_x\,.
\end{equation} The quotient map $p:\Xi_{x}\to\Xi_{x}/\,\Xi^x_x$ is (surjective, continuous and) open, cf.\;\cite[Ex.\,2.2.1]{Wi}. Then $\widetilde\alpha^x$ is a homeomorphism if and only if $\alpha^x$ is an open function. By \cite{Wi} (see the non-trivial implication $(2)\Rightarrow({\rm e})$ on pages 41-42), this happens if the orbit $\mathcal O_x$ is Baire. In our case this is insured, since it is (Hausdorff and) compact. So we conclude that \eqref{homeomorphism} is a homeomorphism and thus $\Xi_{x}/\,\widetilde\Xi^x_x$ is compact.

\smallskip
To finish this part of the proof, we show now that the compactness of $\Xi_{x}/\,\widetilde\Xi^x_x$ implies that $\Xi^x_x$ is syndetic in $\Xi_{x}$\,. Let $\big\{{\sf V}_i\,\big\vert\,i\in I\big\}$ be a covering of the locally compact space $\Xi_{x}$ by open subsets with compact closures. Since $p$ is open and surjective, $\big\{W_i:=p({\sf V_i})\,\big\vert\,i\in I\}$ will be an open covering of the compact space $\Xi_{x}/\,\Xi^x_x$\,, from which we extract a finite subcovering $\big\{W_i:=p({\sf V_i})\,\big\vert\,i\in I_0\big\}$\,. Then ${\sf V}\!:=\bigcup_{i\in I_0}\!{\sf V_i}$ has a compact closure ${\sf K}$ such that $p({\sf K})=\Xi_{x}/\,\Xi^x_x$\,. To check that ${\sf K}\,\Xi^x_x=\Xi_{x}$\,, pick $\xi\in\Xi_{x}$\,. Then $p(\xi)=p(\eta)$ for some $\eta\in{\sf K}$\,. By the definition of $p$\,, this means $\xi\in\eta\,\Xi^x_x$ and we are done. 

\smallskip
Now, we will derive the full result: Suppose that $\Xi$ acts on a very general space $\Si$ and $\mathfrak O_\si\subset\Si$ is compact, for some $\si\in\Si$\,. As the anchor map is continuous, $\rho(\mathfrak O_\si)=\mathcal O_{\rho(\si)}\!\subset X$ (Lemma \ref{lavila}) is compact and by the previous discussion, the decomposition $\Xi_{\rho(\si)}={\sf K_1}\,\Xi_{\rho(\si)}^{\rho(\si)}$ holds for some compact set ${\sf K_1}\subset\Xi$\,. Remark that $\G=\Xi_{\rho(\si)}^{\rho(\si)}$ is a (locally compact) group acting continuously on the compact space
$$
\mathfrak O_\si\cap\{\tau\in\Si\mid \rho(\tau)=\rho(\si)\}\,.
$$ 
 By applying Lemma \ref{groups} (with a change of notations), we obtain another compact set ${\sf K_2}\subset\Xi$ such that 
$$
\Xi_{\rho(\si)}={\sf K_1}\Xi_{\rho(\si)}^{\rho(\si)}={\sf K_1}\big({\sf K_2}\widetilde\Xi_\si^\si\big)=\big({\sf K_1}{\sf K_2}\big)\widetilde\Xi_\si^\si\,,
$$
 finishing the proof.
\end{proof}

There is a shorter proof, in only one step and avoiding the use of Lemma \ref{lacidraci}, but it requires $\Si$ to be locally compact and second countable, which we succeeded to avoid.

\subsection{Minimal sets}\label{cralorifier}

Minimality is a very important property in classical topological dynamics; it extends straightforwardly to groupoid actions, denoted below by $(\Xi,\rho,\th,\Si)$\,. During this subsection, both $\Xi$ and $\Si$ are assumed to be locally compact.

\begin{defn}\label{niminal}
A closed invariant subset $M\subset \Si$ is called {\it minimal} if it does not contain proper non-void closed invariant subsets. Equivalently, $M$ is {\it minimal} if all the orbits contained in $M$ are dense in $M$. {\it The action is minimal} if $\Si$ itself is minimal.
\end{defn}

The minimal sets are the closed invariant non-empty subsets of $\Si$ which are minimal under such requirements. Two minimal sets either coincide or are disjoint. Clearly {\it transitivity} $\Rightarrow$ {\it minimality} $\Rightarrow$ {\it pointwise transitivity) and the implications are strict in general (even for group actions)}.

\begin{rem}\label{diversus}
For open groupoids, minimal sets are either clopen or nowhere dense. This follows from Proposition \ref{intclo}: the boundary of an invariant set is invariant. So if $\Xi$ is open and $M$ is minimal, the boundary $\partial M\subset M$ is closed and invariant, therefore it should be void (i.e.\,$M$ is open) or coincide with $M$ (meaning that $M$ is nowhere dense). But in general this is not the case: in Remark \ref{tutosh}, $[-1,1]\subset\mathbb R$ is a minimal set. 
\end{rem}

\begin{rem}\label{diverticius}
A function $\varphi:\Si\to\R$ is called {\it invariant} with respect to the groupoid action if $\varphi(\si)=\varphi(\tau)$ whenever $\si\overset{\th}{\sim}\tau$\,. By an obvious proof, one shows that if the action is minimal and $\varphi$ is continuous at least at one point, it has to be constant.
\end{rem}

We proceed now to characterize minimality. 

\begin{prop}\label{ipaschuli}
Let $\emptyset\ne M\subset \Si$ be closed and invariant. Then $M$ is minimal if and only if for every $U\subset \Si$ open, with $U\cap M\ne\emptyset$\,, one has $M={\rm Sat}(U\cap M)$\,. 
\end{prop}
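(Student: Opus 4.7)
Both directions reduce, via the identity $\mathrm{Sat}(U \cap M) = \Xi \bu (U \cap M)$ together with the action axiom $\eta^{-1}\bu(\eta\bu\tau) = \tau$, to the observation that $\tau \in \mathrm{Sat}(U \cap M)$ is equivalent to $\mathfrak{O}_\tau \cap (U \cap M) \neq \emptyset$. I would prove the two implications by complementary direct arguments.

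For the $(\Leftarrow)$ direction I argue by contrapositive. Suppose $M$ is not minimal, so there exists a proper non-void closed invariant subset $N \subsetneq M$. Set $U := \Si \setminus N$; this is open and $U \cap M = M \setminus N \neq \emptyset$. For any $\si \in N$, an equation $\si = \xi \bu \tau$ with $\tau \in U \cap M$ would force $\tau = \xi^{-1}\bu\si \in \mathfrak{O}_\si \subset N$ by invariance of $N$, contradicting $\tau \in \Si \setminus N$. Hence $N \cap \mathrm{Sat}(U \cap M) = \emptyset$, forcing $\mathrm{Sat}(U \cap M) \subsetneq M$ and contradicting the hypothesis; therefore $M$ is minimal.

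For the $(\Rightarrow)$ direction, assume $M$ is minimal. The inclusion $\mathrm{Sat}(U \cap M) \subset M$ is automatic from invariance of $M$ and $U \cap M \subset M$. For the reverse inclusion, fix $\tau \in M$ and invoke the equivalent form of minimality recorded in Definition~\ref{niminal}: every orbit contained in $M$ is dense in $M$. Density of $\mathfrak{O}_\tau$ in $M$ then forces $\mathfrak{O}_\tau$ to meet the non-empty relatively open set $U \cap M$, so we can pick $\eta \in \Xi_{\rho(\tau)}$ with $\eta \bu \tau \in U \cap M$; then $\tau = \eta^{-1}\bu(\eta\bu\tau) \in \mathrm{Sat}(U \cap M)$, as desired.

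The only non-routine step is the appeal to orbit density in the forward direction. As the paper stresses, for non-open groupoids the closure $\overline{\mathfrak{O}_\si}$ can fail to be invariant (one only has $\overline{\mathfrak{O}_\si} \subset \mathfrak{C}_\si$), so the passage from "no proper closed invariant subset" to "every orbit is dense" is not purely formal. The proof leans silently on the equivalence stated in Definition~\ref{niminal}, and this is precisely the point that would deserve extra scrutiny; the remaining manipulations of saturations and inverses are straightforward bookkeeping.
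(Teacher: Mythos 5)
Your proposal is correct and takes essentially the same route as the paper: your backward direction is the paper's ``if'' argument recast in contrapositive form (the same set $U=\Si\setminus N$ and the same observation that $N\cap{\rm Sat}(U\cap M)=\emptyset$ by invariance of $N$), and your forward direction is identical, resting on density of the orbit $\mathfrak O_\tau$ in $M$ to produce $\eta\bu\tau\in U\cap M$. Your closing caveat is well placed, but note that it applies equally to the paper's own proof, which invokes the very same equivalence from Definition \ref{niminal} (writing $M=\overline{\mathfrak O}_\si$ for $\si$ in a minimal $M$); since for non-open groupoids the passage from ``no proper non-void closed invariant subset'' to ``every orbit dense'' is not automatic, the forward implication should be read with ``minimal'' taken in the orbit-density formulation.
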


\begin{proof}
{\bf if:} Assume that $\emptyset\ne N\subset M$ is closed and invariant; then $U= N^c\!=\Si\setminus N$ is open. If we show that $N^c\cap M=\emptyset$ one gets $M=N$, i.e. the minimality of $M$. But $N^c\cap M=\emptyset$ follows if we check that $M\not\subset{\rm Sat}(N^c\cap M)$\,, by assumption. This would follow from $N\cap{\rm Sat}(N^c\cap M)=\emptyset$\,. But $\si\in N\cap{\rm Sat}(N^c\cap M)$ means that $\si\in N$ and it is in the orbit of some element of $N^c$. This is impossible since $N$ is invariant and $N^c$ is its complement. 

\smallskip
{\bf only if:} Assuming now that $M$ is minimal, for every $\si\in M$ one has $M=\mathfrak Q_\si=\overline{\mathfrak O}_\si$\,. If $U\cap M\ne\emptyset$\,, $U$ being open, we have $U\cap\mathfrak O_\si\ne\emptyset$\,. But this means that $\si$ is in the orbit of some point that belongs to $U\cap\mathfrak O_\si\subset U\cap M$, so $x\in{\rm Sat}(U\cap M)$\,.
\end{proof}

The main result of this section is 

\begin{thm}\label{sacacarez}
If the point $\si\in\Si$ is almost periodic, its orbit closure $\overline{\mathfrak O}_\si$ is minimal and compact. If, in addition, the groupoid $\Xi$ is open, $\si$ is almost periodic if and only if $\,\overline{\mathfrak O}_\si$ is minimal and compact.
\end{thm}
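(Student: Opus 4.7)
Assuming $\si$ is almost periodic, the plan is to first obtain compactness of $\overline{\mathfrak O}_\si$ and then a key auxiliary claim from which minimality follows. For compactness, I would pick a relatively compact open neighborhood $U$ of $\si$ (possible by local compactness of $\Si$) and use syndeticity of $\widetilde\Xi_\si^U$ to produce a compact ${\sf K}\subset\Xi$ with ${\sf K}\,\widetilde\Xi_\si^U=\Xi_{\rho(\si)}$, giving
$$\mathfrak O_\si=({\sf K}\,\widetilde\Xi_\si^U)\bu\si={\sf K}\bu(\widetilde\Xi_\si^U\bu\si)\subset{\sf K}\bu\overline U\,,$$
where the right-hand side is the $\th$-image of the compact set $({\sf K}\!\times\!\overline U)\cap(\Xi\!\Join\!\Si)$, hence compact. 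The central auxiliary claim is that $\si\in\overline{\mathfrak O}_\tau$ for every $\tau\in\overline{\mathfrak O}_\si$. To prove it I would fix an open $V\ni\si$, choose by regularity an open $U$ with $\overline U\subset V$, and write $\tau=\lim(\xi_i\bu\si)$. Using syndeticity, factor $\xi_i=\kappa_i\zeta_i$ with $\kappa_i\in{\sf K}$ and $\zeta_i\in\widetilde\Xi_\si^U$; passing to a subnet with $\kappa_i\to\kappa$ and noting that $\r(\kappa_i)=\r(\xi_i)=\rho(\xi_i\bu\si)$ converges to $\rho(\tau)$, continuity of the action yields $\zeta_i\bu\si=\kappa_i^{-1}\bu(\xi_i\bu\si)\to\kappa^{-1}\bu\tau$. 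Since $\zeta_i\bu\si\in U$ for all $i$, the limit lies in $\overline U\subset V$. Minimality of $\overline{\mathfrak O}_\si$ then follows in one step: any closed invariant nonempty $N\subset\overline{\mathfrak O}_\si$ contains some $\tau$, hence $\overline{\mathfrak O}_\tau\subset N$ by invariance and closedness of $N$, hence $\si\in\overline{\mathfrak O}_\tau\subset N$ by the claim, forcing $\overline{\mathfrak O}_\si\subset N$.

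\textbf{Reverse direction (assuming $\Xi$ open).} Given an open neighborhood $U$ of $\si$, the goal is to produce a compact ${\sf K}$ with $\Xi_{\rho(\si)}={\sf K}\,\widetilde\Xi_\si^U$. I would start by applying Proposition \ref{ipaschuli} to $U\cap\overline{\mathfrak O}_\si$, yielding $\overline{\mathfrak O}_\si\subset\Xi\bu U$. For each $\tau\in\overline{\mathfrak O}_\si$, pick $\xi_\tau\in\Xi$ and $u_\tau\in U$ with $\xi_\tau\bu u_\tau=\tau$ together with a relatively compact open neighborhood ${\sf N}_\tau$ of $\xi_\tau$. Openness of $\Xi$ (Remark \ref{caofi}) makes each ${\sf N}_\tau\bu U$ an open set, so the family $\{{\sf N}_\tau\bu U\}$ is an open cover of the compact $\overline{\mathfrak O}_\si$; extract a finite subcover and put ${\sf K}:=\bigcup_j\overline{{\sf N}_{\tau_j}}$. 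Any $\eta\in\Xi_{\rho(\si)}$ satisfies $\eta\bu\si\in\overline{\mathfrak O}_\si\subset{\sf K}\bu U$, so one writes $\eta\bu\si=\kappa\bu u$ with $\kappa\in{\sf K}$, $u\in U$, and then $\kappa^{-1}\eta\in\widetilde\Xi_\si^U$ gives $\eta\in{\sf K}\,\widetilde\Xi_\si^U$.

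\textbf{Main obstacle.} The proof is patterned on the classical group-action argument, but the recurring technical nuisance is the bookkeeping of source/range compatibility under every composition—most notably in the subnet argument producing $\kappa^{-1}\bu\tau$, where one must verify $\d(\kappa_i^{-1})=\r(\xi_i)=\rho(\xi_i\bu\si)$ along the net and pass to the limit using continuity of $\rho$. Openness of $\Xi$ is crucial in the reverse direction, both to ensure ${\sf N}_\tau\bu U$ is open (allowing the finite-subcover argument) and, more subtly, because calling $\overline{\mathfrak O}_\si$ minimal presupposes it is invariant, a property that depends on openness via Proposition \ref{intclo}.
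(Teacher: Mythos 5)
Your proposal is correct, and two of its three pieces --- the compactness of $\overline{\mathfrak O}_\si$ via the syndetic factorization $\Xi_{\rho(\si)}={\sf K}\,\widetilde\Xi_\si^U$, and the entire reverse direction via the open cover by sets ${\sf N}\bu U$ of the compact minimal orbit closure --- coincide with the paper's proof almost line by line. Where you genuinely diverge is the minimality half of the forward direction. The paper argues by contradiction: it extracts (implicitly by Zorn's Lemma) a minimal compact $M\subsetneq\overline{\mathfrak O}_\si$ not containing $\si$, separates $\si\in U$ from $M\subset V$ by disjoint open sets, produces a neighborhood $W$ of $M$ with ${\sf K}^{-1}\bu W\subset V$, and concludes that no compact ${\sf K}$ can give ${\sf K}\,\widetilde\Xi_\si^U=\Xi_{\rho(\si)}$. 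You instead prove directly the return property $\si\in\overline{\mathfrak O}_\tau$ for every $\tau\in\overline{\mathfrak O}_\si$, via the factorization $\xi_i=\kappa_i\zeta_i$ and a convergent subnet of the $\kappa_i$, after which minimality drops out formally. Your route buys a cleaner logical structure (no Zorn, no need to manufacture the neighborhood $W$, and as a by-product it shows that every non-void closed invariant subset of $\overline{\mathfrak O}_\si$ is all of $\overline{\mathfrak O}_\si$), at the price of the subnet bookkeeping, which you handle correctly: $\r(\kappa_i)=\r(\xi_i)=\rho(\xi_i\bu\si)$, the set $\Xi\!\Join\!\Si$ is closed in $\Xi\!\times\!\Si$, so the limit pair $(\kappa^{-1},\tau)$ is composable and continuity of $\th$ applies. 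Finally, you are right to flag that Definition \ref{niminal} requires $\overline{\mathfrak O}_\si$ to be invariant before one may call it minimal, and that for non-open $\Xi$ this is not automatic; the paper's proof is silent on exactly the same point, so this is a shared and minor gap rather than a defect specific to your argument.
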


\begin{proof}
Suppose that $\si$ is almost periodic; we show first that its orbit closure $\overline{\mathfrak O}_\si$ is compact. Let $U_0$ be a compact neighborhood of $\si$. Using the assumptions, for some compact set ${\sf K}$ one has
$$
\mathfrak O_\si=\Xi_{\rho(\si)}\bullet\si=\big({\sf K}\,\widetilde \Xi_{\si}^{U_0}\big)\bullet\si={\sf K}\bullet\big(\widetilde \Xi_{\si}^{U_0}\bullet\si\big)\subset{\sf K}\bullet U_0 ={\rm compact}\,,
$$
so $\overline{\mathfrak O}_\si$ is compact.

\smallskip
If $\overline{\mathfrak O}_\si$ is not minimal, it strictly contains a minimal (and compact) set $M$. The point $\si$ does not belong to $M$, so there are disjoint open sets $U,V\subset\Si$ such that $\si\in U$ and $M\subset V$. For an arbitrary compact set ${\sf K}\subset\Xi$ we will now show that ${\sf K}\,\widetilde \Xi_{\si}^U\ne\Xi_{\rho(\si)}$\,, implying that in fact $\si$ is not almost periodic.

\smallskip
The set $M$ being invariant, ${\sf K}^{-1}\!\bullet M\subset M$ holds. Let $W$ be a neighborhood of $M$ with ${\sf K}^{-1}\!\bullet W\subset V$. Since $M\subset\overline{\mathfrak O}_\si=\overline{\Xi_{\rho(\si)}\!\bullet\si}$\,, there exists $\eta\in\Xi_{\rho(\si)}$ such that $\eta\bullet\si\in W$. Then 
$$
\big({\sf K}^{-1}\eta\big)\bullet\si={\sf K}^{-1}\bullet(\eta\bullet\si)\subset{\sf K}^{-1}\!\bullet W\subset V,
$$ 
and thus $\big({\sf K}^{-1}\eta\big)\bullet\si$ is disjoint from $U$, meaning that ${\sf K}^{-1}\eta$ is disjoint from $\widetilde \Xi_{\si}^U$. This shows that 
$$
\Xi_{\rho(\si)}\ni\eta\notin{\sf K}\,\widetilde \Xi_{\si}^U\!\ne\Xi_{\rho(\si)}\,,
$$ 
finishing the proof.

\smallskip
For the converse, suppose now that $\d$ is open and $\overline{\mathfrak O}_\si=\overline{\Xi_{\rho(\si)}\!\bullet\si}$ is minimal and compact. Let $U$ be an open neighborhood of $\si$. For each $\xi\in\Xi$\,, choose an open neighborhood ${\sf N}_\xi$ of $\xi$ with compact closure. The sets $\Xi\bullet U$ and ${\sf N}_\xi\bullet U$ are open in $\Si$\,, cf. \cite[Ex.\,2.1.11]{Wi}. By minimality one has 
$$
\overline{\mathfrak O}_\si\!\subset\Xi\bullet U=\bigcup_{\xi\in\Xi} \xi\bullet U\subset\bigcup_{\xi\in\Xi} {\sf N}_\xi\bullet U.
$$ 
By compactness of $\overline{\mathfrak O}_\si$ applied to the open cover above, for a finite set ${\sf F}=\{\xi_1,\dots,\xi_k\}\subset\Xi$ we get 
$$
\overline{\mathfrak O}_\si\subset\bigcup_{i=1}^k {\sf N}_{\xi_i}\!\bullet U.
$$ 
If $\eta\in\Xi_{\rho(\si)}$ then $\eta\bullet\si\in{\sf N}_{\xi_j}\!\bullet U$ for some $j$, and then one has $\eta\bullet\si\in\eta_j\bu U$ for some $\eta_j\in{\sf N}_{\xi_j}$. It follows immediately that $\r(\eta)=\r(\eta_j)=\d\big(\eta_j^{-1}\big)$ and
$$
\eta_j^{-1}\!\bullet(\eta\bullet\si)=\big(\eta_j^{-1}\eta\big)\bullet\si\in U.
$$ 
This means that $\eta_j^{-1}\eta\in\widetilde \Xi_{\si}^U$ or, equivalently, that 
$$
\eta\in\eta_j\,\widetilde \Xi_{\si}^U\subset{\sf N}_{\xi_j}\widetilde \Xi_{\si}^U\subset\bigcup_{i=1}^k\Big(\overline{{\sf N}_{\xi_i}}\,\widetilde \Xi_{\si}^U\Big)=\Big(\bigcup_{i=1}^k\overline{{\sf N}_{\xi_i}}\Big)\,\widetilde \Xi_{\si}^U.
$$ 
Since $\eta$ is arbitrary one gets $\Xi_{\rho(\si)}\!\subset{\sf K}\,\widetilde \Xi_{\si}^U$, where ${\sf K}\!:=\bigcup_{i=1}^k\!\overline{{\sf N}_{\xi_i}}$ is compact. We checked that $\widetilde \Xi_{\si}^U$ is syndetic in $\Xi_{\rho(\si)}$\,, so $\si$ is almost periodic. 
\end{proof}

\begin{ex}\label{dinasist}
In the case of the transformation groupoid associated to a topological dynamical system $(\G,\gamma,X)$\,, one recovers the classical result (\cite[pag.11]{Au} and \cite[pag.\,28,\,38,\,39]{EE}). For this, we use Example \ref{bunul}. First of all, it is clear that minimality of the group action coincides with minimality in the sense of groupoids, since the orbits are the same. The relevant recurrence sets also coincide: set $S=\{\si\}$ and $T=U$ in \eqref{bunika}. Finally, syndeticity has the same meaning in the two cases.
\end{ex}

\begin{ex}\label{cocoselu}
Consider the case of the pair groupoid $\Xi:=X\!\times\! X$ acting on its unit space $X$, taken to be compact. The action is transitive (only one orbit), so every $x\in X$ should be almost periodic. And it is, since $\Xi_x=X\!\times\!\{x\}$ and $\widetilde\Xi_x^U=U\!\times\!\{x\}$\,. The set ${\sf K}:=X\!\times\!\{x\}$ itself is compact, and 
$$
{\sf K}\,\widetilde\Xi_x^U=(X\!\times\!\{x\})(U\!\times\!\{x\})=(X\!\times\!\{x\})\{(x,x)\}=X\!\times\!\{x\}=\Xi_x\,.
$$ 
This example also shows another difference between the group and the groupoid case. For groups, the compact set ${\sf K}$ in the definition of syndeticity can always be taken finite (see for example \cite[pag.271]{dV}, where this property is called 'discrete syndeticity'). In this groupoid no finite set ${\sf K}\subset X\times \{x\}$ makes the equality ${\sf K}\,\widetilde\Xi_x^U\!=\Xi_x$ true if $X$ itself is infinite. 
\end{ex} 

If $\Si$ is a compact space, Zorn's Lemma implies that it has a minimal subset $M\subset\Si$\,. If in addition $\Xi$ is open, every $x\in M$ is almost periodic, by Theorem \ref{sacacarez}. Thus, in this setting, almost periodic points always exist.

\begin{cor}
If $\,\Xi$ is open (and locally compact), the set $\Si_{\rm alper}$ is invariant.
\end{cor}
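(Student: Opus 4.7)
The plan is to avoid redoing any syndeticity estimate and instead appeal to the minimal-set characterization of almost periodicity already established in Theorem \ref{sacacarez}. The key point is that a point and any of its translates share the same orbit, hence the same orbit closure, so the dichotomy ``almost periodic $\Leftrightarrow$ orbit closure is minimal and compact'' transfers trivially along the orbit.

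Concretely, fix $\si \in \Si_{\rm alper}$ and $\eta \in \Xi_{\rho(\si)}$; the goal is $\eta \bu \si \in \Si_{\rm alper}$. First, apply the forward implication of Theorem \ref{sacacarez} (which does \emph{not} require openness of $\Xi$) to deduce that $\overline{\mathfrak O}_\si$ is minimal and compact. Next, observe that $\eta \bu \si \in \mathfrak O_\si$, so $\mathfrak O_{\eta \bu \si} = \mathfrak O_\si$ and therefore
$$\overline{\mathfrak O}_{\eta \bu \si} = \overline{\mathfrak O}_\si$$
is also minimal and compact. Finally, apply the reverse implication of Theorem \ref{sacacarez}, which is exactly where openness of $\Xi$ is used, to conclude that $\eta \bu \si$ is almost periodic.

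There is no real obstacle here: both subsection \ref{cralorifier} and the corollary assume the joint local compactness and openness needed to quote the nontrivial direction of Theorem \ref{sacacarez}. A more hands-on alternative would attempt to show syndeticity of $\widetilde\Xi_{\eta\bu\si}^V$ directly from syndeticity of $\widetilde\Xi_\si^U$, using the identity $\widetilde\Xi_{\eta\bu\si}^V = \widetilde\Xi_\si^V \eta^{-1}$ together with a continuity-of-the-action argument (picking $U$ and a neighborhood ${\sf N}$ of $\eta$ with ${\sf N}\bu U \subset V$ and invoking Fell's criterion as in the proof of Proposition \ref{zets}). This is unnecessary, since the theorem already packages exactly the information needed.
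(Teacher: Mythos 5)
Your argument is correct and is exactly the paper's own proof: the authors likewise deduce invariance of $\Si_{\rm alper}$ by noting that the second part of Theorem \ref{sacacarez} (the equivalence valid for open groupoids) gives $\mathfrak O_\si\subset\Si_{\rm alper}$ whenever $\si\in\Si_{\rm alper}$, since all points of $\mathfrak O_\si$ share the same orbit closure. Your write-up merely makes explicit the step $\overline{\mathfrak O}_{\eta\bu\si}=\overline{\mathfrak O}_\si$ that the paper leaves implicit.
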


\begin{proof}
The second part of Theorem \ref{sacacarez} guarantees that, if $\si\in\Si_{\rm alper} $\,, then $\mathfrak O_\si\subset\Si_{\rm alper}$\,.
\end{proof}

\begin{cor}\label{sindecat}
Suppose that $\Si$ is compact, $\Xi$ is open and the action is minimal. Then $\widetilde\Xi_\si^V$ is syndetic for every $\si\in\Si$ and every open non-void subset $V$ of $\,\Si$ (and not just for neighborhoods of $\si$)\,.  
\end{cor}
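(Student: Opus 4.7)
The plan is to combine Theorem \ref{sacacarez} with the minimality hypothesis in two stages: first to upgrade every point of $\Si$ to an almost periodic point, and then to use density of orbits to transport the ``neighborhood-of-$\si$'' version of syndeticity to an arbitrary open set. Since $\Si$ is compact and the action is minimal, for every $\si\in\Si$ the orbit closure $\overline{\mathfrak O}_\si$ equals $\Si$ and is therefore compact and minimal. The ``if'' direction of Theorem \ref{sacacarez} (which uses that $\Xi$ is open) then yields $\Si_{\rm alper}=\Si$, so syndeticity of $\widetilde\Xi_\si^U$ in $\Xi_{\rho(\si)}$ is available whenever $U$ is a neighborhood of $\si$. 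This only handles the case $\si\in V$; the point of the corollary is to remove that restriction.

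To handle a general open $V\subset\Si$, I would invoke minimality a second time: the orbit $\mathfrak O_\si$ is dense, so pick $\eta\in\Xi_{\rho(\si)}$ with $\tau:=\eta\bu\si\in V$. Then $V$ is an open neighborhood of the almost periodic point $\tau$, so there exists a compact ${\sf K}\subset\Xi$ with ${\sf K}\,\widetilde\Xi_\tau^V=\Xi_{\rho(\tau)}=\Xi_{\r(\eta)}$. A direct computation, in spirit the special case of Lemma \ref{ushor} with a trivial left translation, identifies
\[
\widetilde\Xi_\tau^V=\widetilde\Xi_{\eta\bu\si}^V=\widetilde\Xi_\si^V\,\eta^{-1},
\]
using the action identity $\xi\bu(\eta\bu\si)=(\xi\eta)\bu\si$ valid as soon as $\d(\xi)=\r(\eta)$.

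Substituting gives ${\sf K}\,\widetilde\Xi_\si^V\,\eta^{-1}=\Xi_{\r(\eta)}$, and right-multiplying by $\eta$ produces the target equality ${\sf K}\,\widetilde\Xi_\si^V=\Xi_{\rho(\si)}$: for any $k\alpha\eta^{-1}\in\Xi_{\r(\eta)}$ one has $(k\alpha\eta^{-1})\eta=k\alpha$, because $\eta^{-1}\eta$ is the unit $\rho(\si)=\d(\alpha)$, and conversely $\Xi_{\r(\eta)}\,\eta=\Xi_{\rho(\si)}$ by the groupoid axioms. The only subtlety I anticipate is making this right-translation step rigorous in the groupoid (rather than group) setting, but it is an immediate bijection between the appropriate $\d$-fibres and requires no further hypothesis. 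The openness of $\Xi$ is used only through Theorem \ref{sacacarez}; the rest of the argument is purely algebraic within $\Xi$.
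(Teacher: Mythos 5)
Your proposal is correct and follows essentially the same route as the paper: use minimality to move $\si$ into $V$ along its orbit, apply the converse direction of Theorem \ref{sacacarez} (valid since $\Xi$ is open and $\overline{\mathfrak O}_\tau=\Si$ is compact and minimal) to get syndeticity of $\widetilde\Xi_\tau^V$, and then undo the translation via $\widetilde\Xi_{\eta\bu\si}^V=\widetilde\Xi_\si^V\eta^{-1}$ and the identity $\Xi_{\r(\eta)}\,\eta=\Xi_{\d(\eta)}$. The paper's proof is the same computation, written with $\zeta$ in place of your $\eta$.
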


\begin{proof}
By minimality, there exists $\zeta\in\Xi_{\rho(\si)}$ such that $V$ is an open neighborhood of $\zeta\bu\si$. Hence $\widetilde\Xi_{\zeta\bu\si}^V$ is syndetic by Theorem \ref{sacacarez}; it can be written as ${\sf K}\,\widetilde\Xi_{\zeta\bu\si}^V\!=\Xi_{\rho(\zeta\bu\si)}$ for some compact subset ${\sf K}$ of $\Xi$\,. Then 
$$
\Xi_{\rho(\si)}=\Xi_{\d(\zeta)}=\Xi_{\r(\zeta)}\,\zeta=\Xi_{\rho(\zeta\bu\si)}\,\zeta={\sf K}\,\widetilde\Xi_{\zeta\bu\si}^V\,\zeta={\sf K}\,\widetilde\Xi_\si^V,
$$ 
meaning that $\,\widetilde\Xi_\si^V$ is also syndetic.
\end{proof}

We say that the action is {\it semisimple} if all the orbit closures are minimal (equivalently: the orbit closures form a partition of $\Si$)\,.

\begin{cor}\label{oblu}
If all the orbits are closed, the action is semisimple. A pointwise almost periodic action is semisimple.  If $\,\Xi$ is open and all the orbits are compact, the action is pointwise almost periodic.
\end{cor}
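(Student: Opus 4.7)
The corollary assembles three statements, each of which I would derive essentially as a packaging of the definitions plus the two parts of Theorem \ref{sacacarez}. My plan is to prove them in the order (1), (3), (2) (or (1), (2), (3) — either works), since (3) leans on (1) and (2) only needs Theorem \ref{sacacarez}.

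For the first claim, the plan is to observe that if every orbit is closed, then $\overline{\mathfrak O}_\si = \mathfrak O_\si$ for each $\si$, so semisimplicity amounts to showing that each $\mathfrak O_\si$ is minimal. Let $\emptyset \ne N \subset \mathfrak O_\si$ be closed and invariant. For any $\tau \in N$, invariance gives $\mathfrak O_\tau \subset N$. But $\tau \in \mathfrak O_\si$ means $\mathfrak O_\tau = \mathfrak O_\si$ (the orbit equivalence is indeed an equivalence relation, so $\tau \sim \si$), hence $N = \mathfrak O_\si$. No closure argument is needed here, which is why the assumption ``orbits closed'' is enough and one does not have to invoke Proposition \ref{intclo} or openness of $\Xi$.

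For the second claim, if $\Si = \Si_{\rm alper}$, then the first (unconditional) half of Theorem \ref{sacacarez} gives that $\overline{\mathfrak O}_\si$ is minimal for every $\si \in \Si$. That is precisely the definition of semisimplicity, so there is nothing else to check.

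For the third claim, suppose $\Xi$ is open and every orbit is compact. Since $\Si$ is Hausdorff, compact orbits are closed, so $\overline{\mathfrak O}_\si = \mathfrak O_\si$ is compact for every $\si$. By the first claim, $\overline{\mathfrak O}_\si$ is minimal. Now apply the second half of Theorem \ref{sacacarez} (which requires precisely that $\Xi$ be open, locally compact, and $\overline{\mathfrak O}_\si$ minimal and compact) to conclude that $\si$ is almost periodic. Since $\si$ was arbitrary, the action is pointwise almost periodic. The only subtle point to flag is that Theorem \ref{sacacarez} is stated under the standing assumption of the subsection that both $\Xi$ and $\Si$ are locally compact, which is inherited here, so no additional hypothesis is needed beyond what the corollary already gives. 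No real obstacle is anticipated — the corollary is essentially a bookkeeping consequence of Theorem \ref{sacacarez} and Definition \ref{niminal}.
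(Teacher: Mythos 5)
Your proposal is correct and matches the paper's intent: the paper's proof merely says the statements "are obvious or follow easily from Theorem \ref{sacacarez}", and your write-up fills in exactly the expected details — the first claim by the elementary observation that a nonempty closed invariant subset of a closed orbit must contain a full (hence the whole) orbit, the second from the unconditional half of Theorem \ref{sacacarez}, and the third by combining the first claim with the converse half of that theorem under the subsection's standing local compactness assumptions. No gaps.
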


\begin{proof}
The statements are obvious or they follow easily from Theorem \ref{sacacarez}. 
\end{proof}

\begin{rem}\label{inversus}
In Example \ref{valtoare} the orbits are precisely the $\d$-fibers, automatically closed: $\mathfrak O_\eta=\overline{\mathfrak O}_\eta=\Xi_{\d(\eta)}$\,. In particular this action of $\Xi$ on itself to the left is semisimple. Pointwise almost periodicity relies on compactness of the fibers. 
\end{rem}

\begin{prop}\label{zdrikat}
If $\,\Si$ is compact and minimal and $\Xi$ is open and strongly non-compact, the action is non-wandering.
\end{prop}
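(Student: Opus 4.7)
The plan is to combine Theorem \ref{sacacarez} with a size-comparison argument that exploits strong non-compactness. Since $\Si$ is compact and minimal, and $\Xi$ is open, every point $\si\in\Si$ satisfies the hypotheses of the converse direction of Theorem \ref{sacacarez}, because the orbit closure $\overline{\mathfrak O}_\si$ must equal $\Si$ (the only non-empty closed invariant subset). Hence every $\si\in\Si$ is almost periodic. Fix $\si\in\Si$ and an arbitrary open neighborhood $W$ of $\si$; by almost periodicity there exists a compact set ${\sf K}\subset\Xi$ such that
\[
\Xi_{\rho(\si)}={\sf K}\,\widetilde\Xi_\si^W.
\]

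Next I would argue that $\widetilde\Xi_\si^W$ cannot be relatively compact. Suppose, toward contradiction, that its closure ${\sf L}:=\overline{\widetilde\Xi_\si^W}\subset\Xi$ is compact. Then ${\sf K}\,\widetilde\Xi_\si^W$ is contained in the image of the compact set $({\sf K}\!\times\!{\sf L})\cap\Xi^{(2)}$ under the continuous multiplication map, hence is relatively compact in $\Xi$\,. Intersecting with the closed subspace $\Xi_{\rho(\si)}$ (closed because $X$ is Hausdorff and $\d$ is continuous), the whole $\d$-fibre $\Xi_{\rho(\si)}$ would be relatively compact, contradicting the strong non-compactness assumption on $\Xi$\,.

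Finally, I would invoke the elementary inclusion $\widetilde\Xi_\si^W\subset\widetilde\Xi_W^W$, which holds because $\si\in W$ (see Remark \ref{iverse}). Since $\widetilde\Xi_\si^W$ is not relatively compact, neither is $\widetilde\Xi_W^W$; as $W$ was an arbitrary neighborhood of $\si$, this shows $\si\in\Si_{\rm nw}$\,. Since $\si\in\Si$ was arbitrary, the action is non-wandering.

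I do not expect a serious obstacle here: the only step that deserves attention is verifying that the product ${\sf K}\,{\sf L}$ of two compact subsets of $\Xi$ is relatively compact in the groupoid (which follows from continuity of multiplication and the Hausdorff assumption on $\Xi$), and making sure the relative compactness in $\Xi$ descends to relative compactness of $\Xi_{\rho(\si)}$ as a subset of itself. Openness of $\Xi$ enters only through the appeal to Theorem \ref{sacacarez}, while strong non-compactness is the essential input that prevents the syndetic set $\widetilde\Xi_\si^W$ from being relatively compact.
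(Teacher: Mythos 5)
Your proof is correct, but it takes a genuinely different route from the paper's. The paper's argument is a two-line structural one: by Proposition \ref{haihui} the set $\Si_{\rm nw}$ is closed and (since $\Xi$ is open and locally compact) invariant, by Corollary \ref{prinurmare} it is non-void because $\Si$ is compact, and minimality then forces $\Si_{\rm nw}=\Si$. You instead show that every point is almost periodic via the converse direction of Theorem \ref{sacacarez} (using that minimality of $\Si$ makes every orbit closure equal to the compact minimal set $\Si$), and then prove directly that an almost periodic point is non-wandering: a syndetic subset of a non-compact fibre cannot be relatively compact, since $\Xi_{\rho(\si)}={\sf K}\,\widetilde\Xi_\si^W\subset{\sf K}\,\overline{\widetilde\Xi_\si^W}$ would otherwise be compact (here you correctly use that $\Xi^{(2)}$ is closed and multiplication is continuous), and $\widetilde\Xi_\si^W\subset\widetilde\Xi_W^W$. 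This last portion essentially re-derives the inclusion $\Si_{\rm alper}\subset\Si_{\rm nw}$ already contained in the chain \eqref{pastish} of Proposition \ref{minwan}, so you could have shortened your argument by citing it. What each approach buys: the paper's proof is minimal in its inputs (it never needs almost periodicity, only the closedness, invariance and non-voidness of $\Si_{\rm nw}$), whereas yours is more self-contained and makes explicit where each hypothesis enters --- openness through Theorem \ref{sacacarez}, compactness of $\Si$ through the existence of compact orbit closures, and strong non-compactness through the incompatibility of syndeticity with relative compactness. Both arguments rely on the standing local compactness assumptions of Subsection \ref{cralorifier}.
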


\begin{proof}
We know from Proposition \ref{haihui} and Corollary \ref{prinurmare} that $\Si_{\rm nw}$ is a non-void closed invariant set. By minimality, it coincides with $\Si$\,. 
\end{proof}

\section{Factors}\label{kalorfier}

\subsection{Epimorphisms of groupoid actions}\label{ficatorel}

We indicate in this subsection the fate of some of the properties above under epimorphisms. Some of the results do not use surjectivity; we leave this to the reader.

\begin{defn}
 An {\it homomorphism} of the groupoid actions $(\Xi,\rho,\th,\Si)$\,, $(\Xi,\rho',\th',\Si')$ is a continuous function $f:\Si\rightarrow\Si'$ such that for all $\si\in\Si\,,\xi\in\Xi_{\rho(\si)}$ one has
 \begin{equation}\label{requirement}
\rho'\big(f(\si)\big)= \rho(\si)\quad{\rm and}\quad f\big(\th(\xi,\si)\big)=\th'\big(\xi,f(\si)\big)\,.
 \end{equation}
 An {\it epimorphism} is a surjective homomorphism; in such a case we say that $(\Xi,\rho',\th',\Si')$ is {\it a factor} of $(\Xi,\rho,\th,\Si)$ and that $(\Xi,\rho,\th,\Si)$ is {\it an extension} of $(\Xi,\rho',\th',\Si')$\,.
 \end{defn}
 
Writing $\bu$ instead of $\th$ and $\bu'$ instead of $\th'$, the second requirement in \eqref{requirement} is 
 \begin{equation}\label{restat}
 f(\xi\bu\si)=\xi\bu'\!f(\si)\,,\quad\forall\,(\xi,\si)\in\Xi\!\Join\!\Si\,.
 \end{equation}
 
 \begin{lem}\label{ajuta}
 The canonical action $(\Xi,{\rm id}_X,\circ,X)$ from Example \ref{startlet} is a factor of any other continuous action $\,(\Xi,\rho,\bu,\Si)$\,.
 \end{lem}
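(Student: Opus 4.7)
The plan is to take the anchor map $\rho:\Si\to X$ itself as the epimorphism $f$. Continuity and surjectivity of $\rho$ are both built into Definition \ref{grupact}, so what remains is to verify the two compatibility conditions in \eqref{requirement}. The first, $\mathrm{id}_X(\rho(\si))=\rho(\si)$, is immediate. The substantive condition is the second: for every $(\xi,\si)\in\Xi\!\Join\!\Si$\,, one must check
\[
\rho(\xi\bu\si)=\xi\circ\rho(\si)\,.
\]
Since $\d(\xi)=\rho(\si)$ and since, by the description in Example \ref{startlet}, $\xi\circ\d(\xi)=\r(\xi)$\,, this reduces to showing the auxiliary identity
\[
\rho(\xi\bu\si)=\r(\xi)\,,\quad\forall\,(\xi,\si)\in\Xi\!\Join\!\Si\,.
\]

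The main (and only) point requiring a small argument is this auxiliary identity, which is not stated explicitly in Definition \ref{grupact} but follows from axiom (2). I would argue as follows: given $(\xi,\si)\in\Xi\!\Join\!\Si$\,, consider the unit $u:=\r(\xi)\in X\subset\Xi$\,. Then $\d(u)=\r(u)=\r(\xi)=\d(u)\cdot(\ldots)$, so $(u,\xi)\in\Xi^{(2)}$. By axiom (2), the pair $(u,\xi\bu\si)$ must lie in $\Xi\!\Join\!\Si$\,, which by definition means $\d(u)=\rho(\xi\bu\si)$\,. But $u$ is a unit, hence $\d(u)=u=\r(\xi)$\,, giving $\rho(\xi\bu\si)=\r(\xi)$ as required.

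With this identity in hand, the verification of \eqref{restat}, rewritten for the canonical action as $\rho(\xi\bu\si)=\xi\circ\rho(\si)$\,, is immediate: both sides equal $\r(\xi)$\,. Thus $f:=\rho$ is an epimorphism from $(\Xi,\rho,\bu,\Si)$ onto $(\Xi,\mathrm{id}_X,\circ,X)$\,, which proves that the canonical action is a factor of every continuous $\Xi$-action. The only real obstacle is recognizing that the anchor identity $\rho(\xi\bu\si)=\r(\xi)$ is hidden inside axiom (2); once this is pointed out, the rest is formal.
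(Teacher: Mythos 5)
Your proposal is correct and follows the same route as the paper: take $f:=\rho$ and verify the equivariance identity $\rho(\xi\bu\si)=\r(\xi)=\xi\circ\rho(\si)$. The only difference is that you explicitly derive the anchor identity $\rho(\xi\bu\si)=\r(\xi)$ from axiom (2) of Definition \ref{grupact} (correctly), whereas the paper simply asserts it; this is a welcome clarification but not a different argument.
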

 
\begin{proof}
The continuous surjection $f\!:=\rho:\Si\to X$ is an epimorphism, since it satisfies
\begin{equation*}
\rho(\xi\bu\si)=\r(\xi)=\xi\circ\d(\xi)=\xi\circ\rho(\si)\,,\quad\forall\,(\xi,\si)\in\Xi\!\Join\!\Si\,.
\end{equation*}
\end{proof}

\begin{lem}\label{despreorbite}
If $f$ is an epimorphism between the groupoid dynamical systems $(\Xi,\rho,\th,\Si)$ and $(\Xi,\rho',\th',\Si')$ and $\si\in\Si$\,, then $f(\mathfrak O_\si)=\mathfrak O'_{f(\si)}$ and $f\big(\overline{\mathfrak O}_\si\big)\subset\overline{\mathfrak O}'_{f(\si)}$\,. Direct images of invariant sets are also invariant.
\end{lem}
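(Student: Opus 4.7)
The plan is to chase through the two compatibility conditions in \eqref{requirement}--\eqref{restat} and use continuity of $f$; nothing deeper should be required.

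First I would establish $f(\mathfrak O_\si)=\mathfrak O'_{f(\si)}$. By definition $\mathfrak O_\si=\Xi_{\rho(\si)}\bu\si$, and since $\rho'(f(\si))=\rho(\si)$ we also have $\Xi_{\rho'(f(\si))}=\Xi_{\rho(\si)}$\,, so that $\mathfrak O'_{f(\si)}=\Xi_{\rho(\si)}\bu'f(\si)$. Applying $f$ element-wise and invoking \eqref{restat} gives
\[
f(\mathfrak O_\si)=\{f(\xi\bu\si)\mid\xi\in\Xi_{\rho(\si)}\}=\{\xi\bu'f(\si)\mid\xi\in\Xi_{\rho(\si)}\}=\mathfrak O'_{f(\si)}\,,
\]
so both inclusions hold simultaneously.

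Next, the inclusion for orbit closures is immediate from continuity of $f$: for any subset $A\subset\Si$ one has $f(\overline A)\subset\overline{f(A)}$, hence
\[
f\big(\overline{\mathfrak O}_\si\big)\subset\overline{f(\mathfrak O_\si)}=\overline{\mathfrak O}'_{f(\si)}\,.
\]

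Finally, let $M\subset\Si$ be $\bu$-invariant and pick $\tau'\in f(M)$, say $\tau'=f(\si)$ with $\si\in M$. For any $\xi\in\Xi_{\rho'(\tau')}=\Xi_{\rho(\si)}$, invariance of $M$ gives $\xi\bu\si\in M$, so by \eqref{restat}
\[
\xi\bu'\tau'=\xi\bu'f(\si)=f(\xi\bu\si)\in f(M)\,,
\]
proving that $f(M)$ is $\bu'$-invariant. I do not foresee any real obstacle: the statement is essentially the observation that the equivariance axiom \eqref{restat} transports orbits forward exactly, closures forward up to continuity, and invariant sets forward exactly.
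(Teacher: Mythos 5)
Your proof is correct and fills in exactly the elementary argument the paper leaves implicit: equivariance \eqref{restat} (together with $\rho'\circ f=\rho$) transports orbits and invariant sets forward exactly, and continuity gives $f(\overline A)\subset\overline{f(A)}$ for the orbit-closure inclusion. This is the same approach as the paper's proof, which merely states that the claim follows from \eqref{restat} and properties of continuous functions.
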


\begin{proof}
The elementary proof relies on \eqref{restat} and on the properties of continuous functions. We recall that for continuous surjections the direct image may not commute with the closure.
\end{proof}

\begin{prop}\label{gagiu}
For every homomorphism $f$ between actions $(\Xi,\rho,\bu,\Si)$\,, $\big(\Xi,\rho',\bu',\Si'\big)$ and every point $\si\in\Si$\,, we have  $f\big(\mathfrak L_\si^{\th}\big)=\mathfrak L_{f(\si)}^{\th'}$\,. In consequence, $f(\si)$ is $\th'$-recurrent if $\si$ is $\th$-recurrent.
\end{prop}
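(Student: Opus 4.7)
The plan is to exploit the divergent-net characterization of limit points (Lemma \ref{labil}) and transport nets back and forth through $f$ using continuity and the equivariance identity $f(\xi\bu\si)=\xi\bu' f(\si)$. The inclusion $f(\mathfrak L_\si^\th)\subset\mathfrak L_{f(\si)}^{\th'}$ is straightforward; the reverse containment is the delicate part and is where I expect the main obstacle.

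For the forward inclusion, take $\tau\in\mathfrak L_\si^\th$. By Lemma \ref{labil}(iii) there is a divergent net $(\eta_i)_{i\in I}$ in $\Xi_{\rho(\si)}$ with $\eta_i\bu\si\to\tau$. The homomorphism axiom $\rho'\circ f=\rho$ identifies the fibres $\Xi_{\rho(\si)}=\Xi_{\rho'(f(\si))}$, so the same net is divergent in the fibre over $f(\si)$. Continuity of $f$ together with equivariance gives
$$
\eta_i\bu' f(\si)=f(\eta_i\bu\si)\longrightarrow f(\tau),
$$
and a second application of Lemma \ref{labil}(iii), now in $\Si'$, yields $f(\tau)\in\mathfrak L_{f(\si)}^{\th'}$. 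This already delivers the recurrence consequence: if $\si$ is $\th$-recurrent, i.e.\! $\si\in\mathfrak L_\si^\th$, then $f(\si)\in f(\mathfrak L_\si^\th)\subset\mathfrak L_{f(\si)}^{\th'}$, so $f(\si)$ is $\th'$-recurrent.

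The reverse inclusion $\mathfrak L_{f(\si)}^{\th'}\subset f(\mathfrak L_\si^\th)$ is the main obstacle. The natural attempt: given $\tau'\in\mathfrak L_{f(\si)}^{\th'}$ with associated divergent net $(\eta_i)$ such that $f(\eta_i\bu\si)\to\tau'$, extract a subnet $(\eta_{i_j})$ along which $(\eta_{i_j}\bu\si)$ converges to some $\tau\in\Si$; continuity then gives $f(\tau)=\tau'$, and divergence of the subnet plus Lemma \ref{labil}(iii) yields $\tau\in\mathfrak L_\si^\th$. Pure equivariance does not supply such a subnet, since the net $(\eta_i\bu\si)$ need not be precompact in $\Si$. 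A clean sufficient hypothesis would be that the orbit closure $\overline{\mathfrak O}_\si$ is compact: Proposition \ref{eusper}(iii) then funnels the tail of $(\eta_i\bu\si)$ into arbitrarily small neighbourhoods of $\mathfrak L_\si^\th$, and compactness yields the desired convergent subnet. An alternative would be a properness assumption on $f$.

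Without some such condition the reverse inclusion can genuinely fail: take $\Xi=\Z$ acting on $\Si=\R$ by translation and $f:\R\to\R/\Z$ the quotient projection, under which $\th'$ becomes the trivial $\Z$-action; then $\mathfrak L_0^\th=\emptyset$ while $\mathfrak L_{f(0)}^{\th'}=\{f(0)\}$. I would therefore either adjoin the compactness hypothesis sketched above and complete the subnet-extraction argument, or read the stated equality as the forward inclusion, which is what the corollary on recurrence actually requires.
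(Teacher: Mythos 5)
Your forward inclusion $f(\mathfrak L_\si^{\th})\subset\mathfrak L_{f(\si)}^{\th'}$ is exactly the paper's argument: transport a divergent net through $f$ using Lemma \ref{labil}\,(iii), the identification $\Xi_{\rho(\si)}=\Xi_{\rho'[f(\si)]}$, continuity and equivariance. Where you and the paper part ways is the reverse inclusion, and your suspicion is vindicated rather than being a gap on your side. The paper's proof claims an equivalence by writing $\tau'=f(\lim_i \eta_i\bu\si)=\lim_i \eta_i\bu'f(\si)$, which, read in the direction $\mathfrak L_{f(\si)}^{\th'}\subset f(\mathfrak L_\si^{\th})$, silently presupposes that the net $(\eta_i\bu\si)$ converges in $\Si$ --- precisely the subnet-extraction step you point out is unavailable without some precompactness. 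Your counterexample is valid: with $\Xi=\Z$ (a strongly non-compact group, encoded as in Example \ref{bunul}) acting on $\Si=\R$ by translation and $f:\R\to\R/\Z$ the quotient map, the factor action is trivial, $f$ satisfies both homomorphism axioms, and one computes $\mathfrak L_0^{\th}=\emptyset$ while $\mathfrak L_{f(0)}^{\th'}=\{f(0)\}$. So the stated equality is false in general and should be weakened to the inclusion $f(\mathfrak L_\si^{\th})\subset\mathfrak L_{f(\si)}^{\th'}$; you are also right that this inclusion is all that the recurrence consequence (and its later use in Theorem \ref{chiroare} for $\alpha={\rm rec}$) requires, and that compactness of $\overline{\mathfrak O}_\si$ (via Proposition \ref{eusper}\,(iii)) or properness of $f$ would restore the equality.
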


\begin{proof}
By Proposition \ref{labil}, $\tau'\in f\big(\mathfrak L_\si^\th\big)$ if and only if exists a divergent net $(\eta_i)_{i\in I}$ in $\Xi_{\rho(\si)}=\Xi_{\rho'[f(\si)]}$ such that 
$$
\tau'=f(\lim_i \eta_i\bu\si)=\lim_i f(\eta_i\bu\si)=\lim_i \eta_i\bu' f(\si)\,,
$$ 
which is equivalent with $\tau'\in \mathfrak L_{f(\si)}^{\th'}$\,. Then the statement about recurrence follows from the definitions.
\end{proof}

Let us see what happens with recurrence sets under epimorphisms.
 
 \begin{lem}\label{label}
 If $M,N\subset\Si$ then $\widetilde\Xi_{M}^{N}\subset\widetilde\Xi\,_{f(M)}^{\prime f(N)}$\,, where the later set is computed with respect to the factor $(\Xi,\rho',\th',\Si')$ of the groupoid dynamical system $(\Xi,\rho,\th,\Si)$\,. 
 \end{lem}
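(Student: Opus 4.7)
The plan is to verify the inclusion directly from the definitions, using the two defining properties of a homomorphism recorded in \eqref{requirement} (or equivalently \eqref{restat}).

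Take an arbitrary $\xi\in\widetilde\Xi_M^N$. By the equivalent description of the recurrence set given in Subsection \ref{calau}, there exists $\si\in M$ with $\d(\xi)=\rho(\si)$ and $\xi\bu\si\in N$. My candidate witness in the factor will be the point $\tau':=f(\si)\in f(M)$. The first compatibility condition in \eqref{requirement} gives $\rho'(\tau')=\rho'(f(\si))=\rho(\si)=\d(\xi)$, so the pair $(\xi,\tau')$ lies in $\Xi\!\Join\!\Si'$ and $\xi\bu'\tau'$ is defined. Then, applying \eqref{restat},
\begin{equation*}
\xi\bu'\tau'=\xi\bu'f(\si)=f(\xi\bu\si)\in f(N),
\end{equation*}
so $(\xi\bu'f(M))\cap f(N)\ne\emptyset$ and hence $\xi\in\widetilde\Xi'^{\,f(N)}_{\,f(M)}$, as desired.

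There is no real obstacle here: the statement is essentially a tautological consequence of the definition of a homomorphism of groupoid actions, and it does not use surjectivity of $f$. The only point worth flagging is that, even though $f$ is continuous and equivariant, in general one should not expect the reverse inclusion to hold, since an element $\xi$ might move some preimage $\si'\in f^{-1}(f(M))\setminus M$ into $f^{-1}(f(N))$ without moving any point of $M$ into $N$.
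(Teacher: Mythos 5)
Your proof is correct and follows essentially the same route as the paper's: a direct unwinding of the definition of the recurrence set combined with the equivariance identity \eqref{restat}, the only cosmetic difference being that you argue elementwise with an explicit witness $f(\si)$ while the paper phrases it at the level of sets via $f(\xi\bu M)=\xi\bu' f(M)$ and $f(A\cap B)\subset f(A)\cap f(B)$. Your closing observations (that surjectivity is not needed and that the reverse inclusion can fail) are also accurate and match the paper's own remarks.
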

 
\begin{proof}
One verifies easily that the next sequence of equivalences and implications is rigorous:
$$
\begin{aligned}
\xi\in\widetilde\Xi_{M}^{N}&\Leftrightarrow (\xi\bu M)\cap N\ne\emptyset\Leftrightarrow f\big[(\xi\bu M)\cap N\big]\ne\emptyset\\
&\Rightarrow f(\xi\bu M)\cap f(N)\ne\emptyset\Leftrightarrow \big(\xi\bu'\!f(M)\big)\cap f(N)\ne\emptyset\\
&\Leftrightarrow \xi\in\widetilde\Xi\,_{f(M)}^{\prime f(N)}\,.
\end{aligned}
 $$
 The second equivalence is true because $f$ is onto. In general one has $f(A\cap B)\subset f(A)\cap f(B)$ and the inclusion could be strict; this shows why (and when) there is no equality in the statement.
 \end{proof}
 
 To see the usefulness of this Lemma, we hurry to apply it. On many occasions we are going to use the equality $f\big[f^{-1}(B')\big]=B'$ for $B'\subset\Si'$, valid by surjectivity.
 
 \begin{thm}\label{chiroare}
 Let $f$ be an epimorphism between the groupoid actions $(\Xi,\rho,\bu,\Si)$ and $\big(\Xi,\rho',\bu',\Si'\big)$\,. For every index $\alpha\in\{{\rm fix,per,wper,alper,rec,nw}\}$ one has $f\big(\Si_{\alpha}\big)\subset\Si'_{\alpha}$\,. In particular, $\rho\big(\Si_{\alpha}\big)\subset X_{\alpha}$\,, where $X_\alpha$ indicates the set of units of $X$ having the property $\alpha$ with respect to the canonical action.
 \end{thm}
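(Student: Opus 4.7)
The plan is to handle the six properties in turn, relying heavily on Lemma \ref{label} (direct image inclusion of recurrence sets) plus the basic identity $\Xi_{\rho(\si)}=\Xi_{\rho'(f(\si))}$ coming from the first equation in \eqref{requirement}. The final assertion about $\rho(\Si_\alpha)\subset X_\alpha$ is then immediate from Lemma \ref{ajuta}, because $\rho$ itself is an epimorphism onto the canonical action.

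First I would dispose of the two easiest cases. For fixed points: if $\si\in\Si_{\rm fix}$ and $\xi\in\Xi_{\rho'(f(\si))}=\Xi_{\rho(\si)}$, then $\xi\bu'\!f(\si)=f(\xi\bu\si)=f(\si)$, so $f(\si)\in\Si'_{\rm fix}$. For recurrent points, Proposition \ref{gagiu} already gives $f\big(\mathfrak L_\si^\th\big)=\mathfrak L_{f(\si)}^{\th'}$, hence $\si\in\mathfrak L_\si^\th$ implies $f(\si)\in\mathfrak L_{f(\si)}^{\th'}$.

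Next I would treat (weak) periodicity and almost periodicity via Lemma \ref{label}. For $\si\in\Si_{\rm per}$, take a compact ${\sf K}\subset\Xi$ with ${\sf K}\widetilde\Xi_\si^\si=\Xi_{\rho(\si)}$; since $\widetilde\Xi_\si^\si\subset\widetilde\Xi'{}_{f(\si)}^{f(\si)}$, we also get ${\sf K}\widetilde\Xi'{}_{f(\si)}^{f(\si)}=\Xi_{\rho'(f(\si))}$, so $f(\si)\in\Si'_{\rm per}$. Weak periodicity is even easier: if $\widetilde\Xi_\si^\si$ is not relatively compact, then neither is the larger set $\widetilde\Xi'{}_{f(\si)}^{f(\si)}$. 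For almost periodicity, pick any neighborhood $V$ of $f(\si)$; by continuity, $U:=f^{-1}(V)$ is a neighborhood of $\si$, and Lemma \ref{label} combined with $f(U)\subset V$ yields
\[
\widetilde\Xi_\si^U\subset\widetilde\Xi'{}_{f(\si)}^{f(U)}\subset\widetilde\Xi'{}_{f(\si)}^{V}.
\]
Syndeticity of the small set in $\Xi_{\rho(\si)}=\Xi_{\rho'(f(\si))}$ passes to the larger set.

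For the non-wandering property the same strategy works: given a neighborhood $V$ of $f(\si)$, the preimage $U=f^{-1}(V)$ is a neighborhood of $\si$, and by Lemma \ref{label} together with surjectivity ($f(U)=V$),
\[
\widetilde\Xi_U^U\subset\widetilde\Xi'{}_{f(U)}^{f(U)}=\widetilde\Xi'{}_V^V.
\]
Hence non-relative-compactness of $\widetilde\Xi_U^U$ forces the same for $\widetilde\Xi'{}_V^V$, and $f(\si)\in\Si'_{\rm nw}$. I do not expect any serious obstacle: the proof is a six-fold verification, and the only delicate point is remembering that Lemma \ref{label} only gives an inclusion (not equality), which is exactly what is needed in each case since all the defining properties (syndeticity, non-relative-compactness, membership) are monotone with respect to the appropriate inclusions. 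Finally, applying the already-established result to the epimorphism $\rho:(\Xi,\rho,\bu,\Si)\to(\Xi,{\rm id}_X,\circ,X)$ of Lemma \ref{ajuta} yields the particular case $\rho(\Si_\alpha)\subset X_\alpha$.
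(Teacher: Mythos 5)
Your proposal is correct and follows essentially the same route as the paper: direct verification for fixed points, Proposition \ref{gagiu} for recurrence, and Lemma \ref{label} applied to preimages of neighborhoods (with $\Xi_{\rho(\si)}=\Xi_{\rho'(f(\si))}$ and monotonicity of syndeticity and relative compactness) for the remaining four cases, finishing with Lemma \ref{ajuta} for the statement about $X_\alpha$. The paper only writes out the non-wandering case in detail and declares the others "similar", so your explicit treatment of per, wper and alper is if anything slightly more complete than the printed argument.
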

 
 \begin{proof}
 Using Lemma \ref{ajuta} the last statement follows from the first, that we now prove.
 
 \smallskip
 For $\alpha={\rm rec}$ this is already known from Proposition \ref{gagiu}. The statement for $\alpha={\rm fix}$ follows immediately from \eqref{requirement}. 
 
 \smallskip
 We prove now the case $\alpha={\rm nw}$\,. Assume that $\si\in\Si$ is non-wandering, but $f(\si)\in\Si'$ is wandering. Then there is an open neighborhood $W'$ of $f(\si)$ such that $\widetilde\Xi_{\,W'}^{\prime W'}$ is relatively compact. The set $f^{-1}(W')$ is a neighborhood of $\si$, so $\widetilde\Xi_{f^{-1}(W')}^{f^{-1}(W')}$ is not relatively compact. By Lemma \ref{label}, this set is contained in $\widetilde\Xi_{\,W'}^{\prime W'}$, and this is a contradiction.
 
 \smallskip
The other proofs are similar.  For instance, the statement about almost periodicity follows easily from the definitions and from Lemma \ref{label}; a set containing a syndetic subset is obviously syndetic.
 \end{proof}
 
\begin{prop}\label{morptranz}
Let $f$ be an epimorphism between the groupoid actions $(\Xi,\rho,\bu,\Si)$ and $\big(\Xi,\rho',\bu',\Si'\big)$\,. Suppose that the action $(\Xi,\rho,\bu,\Si)$ has one of the transitivity properties 
$$
\mathcal P\in\{\textup{{\it transitivity}},\, \textup{{\it pointwise transitivity}},\textup{{\it weak pointwise transitivity}},(i),(ii),(iii)\}
$$
(see Theorem \ref{pricinoasa}). Then the action $\big(\Xi,\rho',\bu',\Si'\big)$ also has $\mathcal P$.
\end{prop}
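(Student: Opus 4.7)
The unifying idea is to pull each property back through $f^{-1}$ so as to invoke the hypothesis on $(\Xi,\rho,\bu,\Si)$, and then push forward through $f$ to draw the conclusion in $\Si'$. The three general facts I will use repeatedly are: (a) continuity of $f$ gives that $f^{-1}(U')$ is open whenever $U'\subset\Si'$ is open and $f^{-1}(C')$ is closed whenever $C'$ is closed; (b) surjectivity gives $f\bigl[f^{-1}(A')\bigr]=A'$ and also that $f^{-1}(A')$ is non-empty whenever $A'$ is; (c) equivariance, i.e.\ the relation $f(\xi\bu\si)=\xi\bu'\! f(\si)$ together with the compatibility $\rho'\!\circ\! f=\rho$, which forces preimages of $\bu'$-invariant sets to be $\bu$-invariant: if $\si\in f^{-1}(A')$ and $\xi\in\Xi_{\rho(\si)}=\Xi_{\rho'(f(\si))}$\,, then $f(\xi\bu\si)=\xi\bu'\!f(\si)\in A'$.

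For \emph{transitivity} and \emph{pointwise transitivity} I will apply Lemma \ref{despreorbite}: if $\mathfrak O_\si=\Si$ (resp.\ $\overline{\mathfrak O}_\si=\Si$), then $\mathfrak O'_{f(\si)}\!=f(\mathfrak O_\si)=\Si'$ (resp.\ $\Si'\!=f(\Si)=f\bigl(\overline{\mathfrak O}_\si\bigr)\subset\overline{f(\mathfrak O_\si)}=\overline{\mathfrak O}'_{f(\si)}$ by continuity of $f$). For property $(i)$, I argue contrapositively: given a decomposition $\Si'=C'\cup D'$ into proper closed invariant subsets, the sets $f^{-1}(C')$ and $f^{-1}(D')$ are closed, invariant by (c), and proper by surjectivity (since $f^{-1}(A')=\Si$ would force $A'=\Si'$), and they cover $\Si$, violating $(i)$ for $\Si$. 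For $(ii)$, if $U'\subset\Si'$ is open, non-void, and invariant, then $f^{-1}(U')$ has the same properties in $\Si$\,, so it is dense there; applying $f$ and using $f(\Si)=\Si'$ together with continuity yields $\Si'=f\bigl(\overline{f^{-1}(U')}\bigr)\subset\overline{f\bigl(f^{-1}(U')\bigr)}=\overline{U'}$\,.

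For recurrent transitivity $(iii)$, the shortest route is Lemma \ref{label}: given open non-void $U',V'\subset\Si'$, the hypothesis applied to the open non-void sets $f^{-1}(U')$ and $f^{-1}(V')$ yields an element $\xi\in\widetilde\Xi_{f^{-1}(U')}^{f^{-1}(V')}$, and Lemma \ref{label} then gives $\xi\in\widetilde\Xi'^{\,f(f^{-1}(V'))}_{\,f(f^{-1}(U'))}=\widetilde\Xi'^{\,V'}_{\,U'}$\,, so the latter set is non-empty.

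The only step that requires real care is the preservation of invariance under preimages: one must check that the anchor compatibility $\rho'\!\circ\! f=\rho$ makes the condition $\xi\in\Xi_{\rho(\si)}$ coincide with $\xi\in\Xi_{\rho'(f(\si))}$, so that the equivariance relation in $\Si'$ can legitimately be applied. Beyond this, no part of the argument is substantial; in particular no openness assumption on $\Xi$ is needed, which is consistent with the fact that $(iv)$ is absent from the list (the author has already noted, via Example \ref{valioso4}, that topological transitivity can fail to descend to factors in the non-open case, essentially because the preimage of a set that is neither dense nor nowhere dense need not retain that dichotomy-breaking behavior in $\Si$).
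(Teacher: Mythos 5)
Your proposal is correct and follows essentially the same route as the paper's proof: Lemma \ref{despreorbite} for transitivity and pointwise transitivity, pulling back invariant open/closed sets through $f^{-1}$ for $(i)$ and $(ii)$, and Lemma \ref{label} applied to $f^{-1}(U'),f^{-1}(V')$ for $(iii)$. The only cosmetic difference is that you handle $(i)$ directly by contraposition with closed invariant sets, whereas the paper passes through the equivalent open formulation $(i')$; both are immediate.
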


\begin{proof}
$\mathcal P=\textup{{\it transitivity}}.$ By Lemma \ref{despreorbite}, the epimorphism $f$ transforms the single orbit $\mathfrak O_\si=\Si$ into an orbit $\mathfrak O'_{f(\si)}=f(\mathfrak O_\si)=\Si'$.

\smallskip
$\mathcal P=\textup{{\it pointwise transitivity}}.$ This also follows immediately from Lemma \ref{despreorbite}, the part referring to orbit closures.

\smallskip
$\mathcal P=\textup{\it weak pointwise transitivity}.$ If $C'\subset\Xi'$ is a closed and invariant set containing $f(\si)$, then $f^{-1}(C')\subset \Xi$ is a closed and invariant set containing $\si$, hence $\Si=\mathfrak C_\si\subset f^{-1}(C')$ and $\Si'=C'$.

\smallskip
$\mathcal P=(i)\Leftrightarrow(i').$ Let $\emptyset\ne U',V'\subset \Si'$ be invariant open sets and let $f^{-1}(U')\,,f^{-1}(V')$ be their open non-void invariant inverse images. Since $(\Xi,\rho,\bu,\Si)$ satisfies $(i')$, one has $f^{-1}(U')\cap f^{-1}(V')\ne\emptyset$\,. Consequently
$$
U'\cap V'\!=f\big[f^{-1}(U')]\cap f\big[f^{-1}(V')\big]\supset f\big[f^{-1}(U')\cap f^{-1}(V')\big]\ne\emptyset
$$
and $\big(\Xi,\rho',\bu',\Si'\big)$ also satisfies $(i')$\,, which is equivalent with $(i)$.

\smallskip
$\mathcal P=(ii).$ Let $U'\subset \Si'$ be open and invariant. Then $f^{-1}(U')\subset\Si$ is open and invariant, so it is dense. Since $f$ is surjective it follows that $U'=f\big(f^{-1}(U')\big)$\,, and this one is dense in $\Si'$.

\smallskip
$\mathcal P=(iii).$ Let $\emptyset\ne U',V'\subset\Si'$ be open subsets. By recurrent transitivity of the initial action, one has $\widetilde\Xi^{f^{-1}(V')}_{f^{-1}(U')}\ne\emptyset$\,. Then, by Lemma \ref{label}, we get
$$
\emptyset\ne \widetilde\Xi^{f^{-1}(V')}_{f^{-1}(U')}\subset\widetilde\Xi^{\prime f[f^{-1}(V')]}_{\,f[f^{-1}(U')]}=\widetilde\Xi^{\prime V'}_{\,U'}
$$
and the proof is finished.
\end{proof}

\begin{cor}
We say that {\rm the groupoid $\Xi$ has the property $\mathcal P$} if its canonical action on its unit space has this property. Suppose that the topological groupoid $\Xi$ admits a continuous action $(\rho,\th,\Si)$ having one of the properties $\mathcal P$ mentioned in Proposition \ref{morptranz}. Then $\Xi$ itself has this property.
\end{cor}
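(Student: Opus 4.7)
The plan is essentially to combine two results already in hand: Lemma \ref{ajuta} and Proposition \ref{morptranz}. Lemma \ref{ajuta} asserts that for any continuous action $(\Xi,\rho,\th,\Si)$, the anchor map $\rho:\Si\to X$ itself is an epimorphism of groupoid actions onto the canonical action $(\Xi,\mathrm{id}_X,\circ,X)$. Thus the canonical action is always a factor of the given action, via $\rho$.

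Given this, the corollary is immediate: assume $(\rho,\th,\Si)$ has one of the properties $\mathcal P\in\{\textnormal{transitivity},\textnormal{pointwise transitivity},(i),(ii),(iii)\}$. By Lemma \ref{ajuta}, $\rho$ is an epimorphism onto the canonical action of $\Xi$ on $X$. By Proposition \ref{morptranz}, each of the listed properties transfers from any action to any of its factors, so the canonical action of $\Xi$ on $X$ inherits $\mathcal P$. By the definition adopted in the statement, this means $\Xi$ itself has $\mathcal P$.

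There is no real obstacle here: the whole content of the corollary is already encoded in the two cited results, and the proof is just their composition. The only thing to be careful about is not to slip in properties outside the allowed list (for instance topological transitivity $(iv)$ is not among those preserved by factors in general, cf.\ Example \ref{valioso4}), so the statement must be phrased with exactly the $\mathcal P$ of Proposition \ref{morptranz}, as it is.
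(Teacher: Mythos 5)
Your proof is correct and is exactly the paper's argument: the paper's own proof reads ``This follows from Proposition \ref{morptranz} and Lemma \ref{ajuta}'', and you have simply spelled out that composition (canonical action is a factor via $\rho$, then transfer $\mathcal P$ to the factor). Your closing caveat about excluding property $(iv)$ is also consistent with the paper's discussion surrounding Example \ref{valioso4}.
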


\begin{proof}
This follows from Proposition \ref{morptranz} and Lemma \ref{ajuta}. 
\end{proof}

If we require $\Xi$ to be open, there is a direct proof that the property $(iv)$ from Theorem \ref{pricinoasa} (called topological transitivity) also transfer to factors; it uses Lemma \ref{intclo}. This also follows joining Theorem \ref{pricinoasa} and Proposition \ref{morptranz}. But see the next example for a non-topologically transitive groupoid, which however possesses a topologically transitive action. It follows that topological transitivity is not preserved by factors (which is rather surprising).

\begin{ex}\label{valioso4} 
Form the product groupoid $\Xi=\Pi\times \R$\,, with the relation 
$$
x\,\Pi\,y\ \Leftrightarrow\ x,y\in\mathbb Q\ \ \textup{or}\ \ x=y
$$ 
over $X=\R$\,, and consider the wide subgroupoid 
$$
\Delta=\{(x,y,g)\in\Xi\mid x,y\not\in\mathbb Q\Rightarrow g=0\}=\big(\mathbb Q\!\times\!\mathbb Q\!\times\!\R\big)\cup\big({\rm Diag}(\R\!\times\!\R)\!\times\!\{0\}\big)\,.
$$ 
By analogy with Example \ref{valioso3}, its easy to see that the canonical action of $\Delta$ on $X=\R$ is not topologically transitive. Actually, the orbits of rational points all coincide with $\mathbb Q$\,, but each irrational point is a fixed point, so $(s,t)\!\setminus\!\mathbb Q$ is invariant for every $s<t$\,, without being dense or nowhere dense. Now we will exhibit a topologically transitive action of $\Xi$\,: Let 
$$
\Si=\{(y,h)\in\R^2\mid h=0\textup{ or }y\in\mathbb Q\}=(\R\!\times\!\{0\})\cup(\mathbb Q\!\times\!\R)
$$ 
with the topology inherited from $\R^2$ and define the continuous action
$$
\rho(y,h)=y\quad \textup{ and }\quad (x,y,g)\bu(y,h)=(x,g+h)\,.
$$
Notice that the orbits of $\bu$ are $\mathbb Q\!\times\!\R$ and (the fixed points) $\{(y,0)\!\mid\!y\in\R\setminus\mathbb Q\}$\,, implying that all of the invariant sets are either dense or nowhere dense, since $\Si\!\setminus\!(\mathbb Q\!\times\!\R)=(\R\!\setminus\!\mathbb Q)\!\times\!\{0\}$ is already nowhere dense.
\end{ex}

We finish this subsection with a result on the behavior of minimality under epimorphisms, in both directions.

\begin{prop}\label{garbanzos}
Let $f:\Si\to\Si'$ be an epimorphism between the actions $(\Xi,\rho,\th,\Si)$ and $\big(\Xi,\rho',\th',\Si'\big)$\,. 
\begin{enumerate}
\item[(i)]
If $M\subset\Si$ is minimal and $f(M)$ is closed in $\Si'$, then $f(M)$ is minimal. 
\item[(ii)]
Suppose that $\Si$ is compact (hence $\Si'$ is also compact). If $M'\subset\Si'$ is minimal, there exists $M\subset\Si$ minimal such that $f(M)=M'$. If $\,\Xi$ is open and locally compact and $\si'\in\Si'$ is almost periodic, then $\si'=f(\si)$ for some almost periodic point $\si$ of $\,\Si$\,.
\end{enumerate}
\end{prop}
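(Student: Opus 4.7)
For part (i), the plan is to use the standard ``lift obstructions via $f^{-1}$'' trick. First I verify two preliminary facts that will be used repeatedly: \emph{(a)} if $N'\subset\Si'$ is invariant, then $f^{-1}(N')$ is invariant, which follows directly from the equivariance relation $f(\xi\bu\si)=\xi\bu'\!f(\si)$ together with $\rho'\!\circ\!f=\rho$; \emph{(b)} $f(M)$ is invariant by Lemma \ref{despreorbite}. Then, given any non-void closed invariant $N'\subset f(M)$, the set $f^{-1}(N')\cap M$ is closed, invariant, contained in $M$, and non-empty (take any $\tau'\in N'$ and lift it via $f(M)\supset N'$). Minimality of $M$ forces $M\subset f^{-1}(N')$, hence $f(M)\subset N'$ by surjectivity of $f$. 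So $f(M)=N'$, proving that $f(M)$ is minimal.

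For the first assertion of part (ii), I would use compactness of $\Si$ together with Zorn's lemma. The preimage $f^{-1}(M')$ is non-empty (by surjectivity), closed (by continuity) and invariant (by (a) above). Since $\Si$ is compact, any descending chain of non-empty closed invariant subsets of $f^{-1}(M')$ has non-empty intersection, so by Zorn's lemma there is a minimal closed invariant set $M\subset f^{-1}(M')$. Because $M$ is closed in compact $\Si$, it is compact, hence $f(M)$ is compact, and therefore closed in the Hausdorff space $\Si'$. Now $f(M)$ is non-empty, closed, invariant and contained in the minimal set $M'$; minimality of $M'$ yields $f(M)=M'$.

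For the second assertion of part (ii), the strategy is to combine the first assertion with Theorem \ref{sacacarez} applied in both directions. Let $\si'\in\Si'$ be almost periodic. By Theorem \ref{sacacarez}, the orbit closure $M':=\overline{\mathfrak O'}_{\si'}$ is minimal and compact. By what we just proved, there is a minimal $M\subset\Si$ with $f(M)=M'$; choose $\si\in M$ with $f(\si)=\si'$, which is possible because $\si'\in M'=f(M)$. Since $M$ is minimal, one has $\overline{\mathfrak O}_\si=M$, which is compact (closed in the compact space $\Si$) and minimal. Applying the converse direction of Theorem \ref{sacacarez}, valid because $\Xi$ is open and locally compact, we conclude that $\si$ is almost periodic.

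The only mildly subtle point is verifying that $f^{-1}(N')$ is invariant; everything else is bookkeeping with surjectivity, compactness and the two directions of Theorem \ref{sacacarez}. The main conceptual move is recognizing that the ``lift'' needed in (ii) is obtained by applying Zorn's lemma inside $f^{-1}(M')$ rather than trying to invert $f$ pointwise, after which minimality of $M'$ closes the argument without extra work.
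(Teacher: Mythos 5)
Your proof is correct. Part (ii) is essentially identical to the paper's argument (Zorn's lemma applied inside $f^{-1}(M')$, then Theorem \ref{sacacarez} in both directions); for part (i) the paper instead argues in one line via Lemma \ref{despreorbite}, namely $f(M)=f\big(\overline{\mathfrak O}_\si\big)\subset\overline{\mathfrak O}'_{f(\si)}\subset\overline{f(M)}=f(M)$, so that every orbit in the closed invariant set $f(M)$ is dense, whereas you pull back closed invariant subsets through $f$ --- an equivalent characterization of minimality, so the difference is only cosmetic.
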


\begin{proof}
(i) This is dealt with easily by Lemma \ref{despreorbite}: if $\si\in M$ then
$$
f(M)=f\big(\overline{\mathfrak O}_\si\big)\subset\overline{\mathfrak O}'_{f(\si)}\subset\overline{f(M)}=f(M)\,,
$$ 
so the orbit of $f(\si)$ is dense in the closed set $f(M)$\,.

\smallskip
(ii) The inverse image $f^{-1}(M')$ is non-void closed and $\bu$-invariant. By Zorn's Lemma, it contains a minimal (and compact) subsystem $M$. The direct image $f(M)\subset M'$ is non-void closed and $\bu'$-invariant, so it must coincide with $M'$.  For the second part: As $\si'$ is almost periodic, $M'\!:=\overline{\mathfrak O}'_{\si'}$ is minimal, so we can find some (compact) minimal subset $M\subset\Si$\,, such that $f(M)=M'$ and $\si'=f(\si)$ for some $\si\in M$. By Theorem \ref{sacacarez}, any $\si\in M$ is almost periodic.
\end{proof}

\subsection{The action associated to an extension}\label{fregadorel}

The next result is a straightforward generalization of a recent construction from \cite{EK}, in which the authors showed how to encode extensions of classical group actions by groupoids. Then we are going to study the resulting correspondence with respect to recurrence and other dynamical properties.

\begin{prop}
Let $(\Xi,\rho',\th',\Si')$ be a groupoid action. There is a one-to-one correspondence between extensions $f:\Si\to\Si'$ and actions of the groupoid $\,\Xi\!\ltimes_{\th'}\!\Si'$.
\end{prop}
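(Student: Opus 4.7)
The plan is to establish the correspondence by producing explicit mutually inverse constructions, and then checking that the groupoid action axioms translate correctly in each direction.

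First I would construct the map from extensions to $(\Xi\!\ltimes_{\th'}\!\Si')$-actions. Given an extension $f:\Si\to\Si'$ coming from an action $(\Xi,\rho,\th,\Si)$, set the anchor of the would-be action on $\Si$ to be $f$ itself (which is surjective, as required in Definition \ref{grupact}) and define
\begin{equation*}
(\xi,\si')\star\si:=\xi\bu\si\quad\text{whenever}\quad \si'=f(\si)\,,
\end{equation*}
where $(\xi,\si')\in\Xi\!\ltimes_{\th'}\!\Si'$ and $\si\in\Si$. The composability condition reads $\d(\xi)=\rho'(\si')=\rho'(f(\si))=\rho(\si)$ by equivariance of the anchors, so $\xi\bu\si$ is defined. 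Continuity of $\star$ follows from continuity of $f$ and of $\th$. For the unit axiom in $\Xi\!\ltimes_{\th'}\!\Si'$, the identity at $f(\si)$ is $\big(\rho'(f(\si)),f(\si)\big)=(\rho(\si),f(\si))$, and one has $(\rho(\si),f(\si))\star\si=\rho(\si)\bu\si=\si$. The compositional axiom is a direct consequence of $(\eta,\xi\bu'f(\si))(\xi,f(\si))=(\eta\xi,f(\si))$ and associativity of $\th$, together with the fact that $f(\xi\bu\si)=\xi\bu'f(\si)$ ensures the intermediate composability.

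Now I would construct the inverse map. Given a continuous action $\star$ of $\Xi\!\ltimes_{\th'}\!\Si'$ on a space $\Si$ with (surjective) anchor $\tilde f:\Si\to\Si'$, I put $\rho:=\rho'\circ\tilde f$ and define
\begin{equation*}
\xi\bu\si:=(\xi,\tilde f(\si))\star\si\quad\text{whenever}\quad \d(\xi)=\rho(\si)\,.
\end{equation*}
The compatibility $\d(\xi)=\rho(\si)$ is exactly the condition $(\xi,\tilde f(\si))\in\Xi\!\ltimes_{\th'}\!\Si'$, so the right-hand side is defined. To verify that this is a $\Xi$-action on $\Si$, the unit axiom uses that $(\rho(\si),\tilde f(\si))$ is the unit at $\tilde f(\si)$ in the crossed product, and the associativity is unpacked from the action of $\Xi\!\ltimes_{\th'}\!\Si'$ after noting that, by \eqref{garucho},
\begin{equation*}
\tilde f(\xi\bu\si)={\sf r}(\xi,\tilde f(\si))=\xi\bu'\tilde f(\si)\,,
\end{equation*}
which simultaneously establishes that $\tilde f$ intertwines the constructed $\Xi$-action on $\Si$ with $\th'$ on $\Si'$, i.e.\ that $\tilde f$ is an epimorphism.

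Finally I would check that the two constructions are mutually inverse. Starting from an extension $f$ with action $\bu$ and applying the forward then the reverse construction, one recovers the same anchor and $(\xi,f(\si))\star\si=\xi\bu\si$, giving back $\bu$ verbatim. Starting from an action $\star$ with anchor $\tilde f$ and applying the reverse then the forward construction, one gets back the same anchor $\tilde f$ and the formula $(\xi,\tilde f(\si))\star\si=\xi\bu\si$ reproduces $\star$ on its whole domain. The main thing to watch is the careful matching of domains of definition (the composability constraints $\d(\xi)=\rho(\si)$ on the $\Xi$-side versus $\si'=\tilde f(\si)$ on the crossed product side), but once the anchor of the $(\Xi\!\ltimes_{\th'}\!\Si')$-action is identified with the extension map, every condition translates tautologically.
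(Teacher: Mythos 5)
Your proposal is correct and follows essentially the same route as the paper: both directions are given by the same explicit formulas (anchor $f$ with $(\xi,f(\si))\star\si=\xi\bu\si$ one way, $\rho=\rho'\circ f$ with $\xi\bu\si=(\xi,f(\si))\star\si$ the other), with the equivariance of $f$ recovered from the range map of the crossed product exactly as in the paper. Your write-up merely spells out a few of the axiom verifications that the paper leaves as "straightforward."
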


\begin{proof}
Let $(\Xi,\rho,\th,\Si)\overset{f}{\to}(\Xi,\rho',\th',\Si')$ be an epimorhism. We denote by $\,\Xi(\th')=\Xi\!\ltimes_{\th'}\!\Si'$ the action groupoid of the second groupoid action, described in Subsection \ref{castanier}. Its unit space is $\Si'$. This gives raise to a new groupoid action $\big(\Xi(\th'),f,\Th,\Si\big)$\,, where (see the notation \eqref{ganchor} and use $\rho'\circ f=\rho$)
\begin{equation*}\label{cambridge}
\begin{aligned}
\Xi(\th')\!\Join\!\Si:&=\big\{\big((\xi,\si'),\si\big)\,\big\vert\,\d(\xi)=\rho'(\si')\,,\,f(\si)=\si'\big\}\\
&=\big\{\big(\xi,f(\si),\si\big)\,\big\vert\,\d(\xi)=\rho'[f(\si)]=\rho(\si)\big\}\subset\Xi\!\times\Si'\!\times\!\Si
\end{aligned}
\end{equation*}
and
\begin{equation}\label{oxford}
\Th_{(\xi,f(\si))}(\si):=\th_\xi(\si)\,,\quad{\rm if}\ \d(\xi)=\rho(\si)\,.
\end{equation}
It is straightforward to check that everything is well-defined and that one obtains a groupoid action. 

\smallskip
Reciprocally, suppose that $(\Xi,\rho',\th',\Si')$ is a groupoid action and $\big(\Xi(\th'),f,\Th,\Si\big)$ is an action of the transformation groupoid $\Xi(\th')=\Xi\!\ltimes_{\th'}\!\Si'$ on another topological space $\Si$\,. In particular, $f:\Si\to\Si'=\Xi(\th')^{(0)}$ is a continuous surjection. The new action $(\Xi,\rho,\th,\Si)$ is constructed in the following way: The anchor is 
$$
\rho:=\rho'\circ f:\Si\to X\!:=\Xi^{(0)}
$$
and the action is given by \eqref{oxford}, with a modified interpretation: the r.h.s. is now defined by the l.h.s. Then $f:\Si\to\Si'$ is the epimorphism.
The details are easy. To show that $f\circ\th_\xi=\th_\xi'\circ f$, for instance, pick $\si\in\Si$ with $\d(\xi)=\rho(\si)=\rho'[f(\si)]$\,. Then
$$
f\big[\th_\xi(\si)\big]\overset{\eqref{oxford}}{=}f\big[\Th_{(\xi,f(\si))}(\si)\big]={\sf r}\big(\xi,f(\si)\big)\overset{\eqref{garucho}}{=}\th'_\xi\big[f(\si)\big]\,.
$$
Here ${\sf r}$ is the range map of $\Xi\!\ltimes_{\th'}\!\Si'$\,.
The two procedures are inverse to each other.
\end{proof}

\begin{rem}\label{jenctura}
It is easy to see that the projection
$$
\Xi\!\times\!\Si'\!\times\!\Si\ni(\xi,\si'\!,\si)\to(\xi,\si)\in\Xi\!\times\!\Si
$$
restricts to a groupoid isomorphism $\big(\Xi\!\ltimes_{\th'}\!\Si'\big)\!\ltimes_\Th\Si\cong\Xi\!\ltimes_{\th}\!\Si$\,. The basic remark is that the elements of $\big(\Xi\!\ltimes_{\th'}\!\Si'\big)\!\ltimes_\Th\Si$ have the form $((\xi,f(\si)),\si)\equiv(\xi,f(\si),\si)$\,, with $\d(\xi)=\rho(\si)$\,. The algebraic verifications are left to the reader.
\end{rem}

\begin{prop}\label{vindeo}
The two actions $(\Xi,\rho,\th,\Si)$ and $\big(\Xi(\th'),f,\Th,\Si\big)$ have the same invariant sets. Consequently, they have in the same time the following properties: transitivity, point transitivity, topological transitivity, the properties (i) and (ii) of Theorem \ref{pricinoasa} and minimality. The fixed points are the same.
\end{prop}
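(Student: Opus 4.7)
The plan is to reduce the entire proposition to a single structural observation. For any $\si\in\Si$, the $\mathsf d$-fiber of $\Xi(\th')=\Xi\!\ltimes_{\th'}\!\Si'$ above the unit $f(\si)\in\Si'$ equals $\{(\xi,f(\si))\mid\d(\xi)=\rho'[f(\si)]\}$, and since $\rho=\rho'\!\circ\!f$, the projection $(\xi,f(\si))\mapsto\xi$ identifies this fiber bijectively with $\Xi_{\rho(\si)}$. By the defining formula \eqref{oxford}, this bijection intertwines the two action maps:
$$
\Th_{(\xi,f(\si))}(\si)=\th_\xi(\si)\,,\quad\forall\,\xi\in\Xi_{\rho(\si)}\,.
$$
Hence $\mathfrak O^\Th_\si=\mathfrak O^\th_\si$ for every $\si\in\Si$, and consequently the orbit closures coincide as well.

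From the equality of orbits I would conclude immediately that a subset $M\subset\Si$ is $\Th$-invariant if and only if it is $\th$-invariant, since in both cases invariance is equivalent to being a union of orbits. All the listed properties then follow automatically: transitivity and pointwise transitivity depend only on the orbit structure; topological transitivity (property (iv) of Theorem \ref{pricinoasa}) and the properties (i) and (ii) depend only on the lattice of open or closed invariant subsets; minimality refers only to the smallest non-void closed invariant subsets. Each of these is preserved by the coincidence just established.

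For the fixed point clause I would argue directly: $\si\in\Si^\Th_{\rm fix}$ means $\Th_{(\xi,f(\si))}(\si)=\si$ for every arrow of $\Xi(\th')$ whose source is $f(\si)$, which through the bijection above is equivalent to $\th_\xi(\si)=\si$ for every $\xi\in\Xi_{\rho(\si)}$, i.e. to $\si\in\Si^\th_{\rm fix}$.

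The only point that requires a little care is the bookkeeping with anchors: the anchor of $\Th$ is $f:\Si\to\Si'$, not $\rho:\Si\to X$, so fibers used to test invariance or fixedness at $\si$ sit above $f(\si)$ and not above $\rho(\si)$. Once this match is made through the canonical bijection of fibers above, no substantive obstacle remains; the proof is essentially a verification that the two actions are the "same" action, relabeled.
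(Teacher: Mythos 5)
Your proposal is correct and follows essentially the same route as the paper: the paper likewise establishes that the two actions have identical orbit equivalence relations (by unpacking which arrows of $\Xi(\th')$ can act on a given $\si$, exactly your fiber bijection) and then derives everything from the coincidence of invariant sets. Your extra explicit treatment of the fixed points and the anchor bookkeeping is consistent with, and slightly more detailed than, the paper's one-line "all the statements follow from this."
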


\begin{proof}
Let us explore the orbit structure; for $\si,\tau\in\Si$ one has
$$
\begin{aligned}
\si\overset{\Th}{\sim}\tau&\ \Leftrightarrow\;\exists\,(\xi,\si')\in\Xi(\th')\ \,{\rm such\ that}\,\ \si'=f(\si)\,,\,\Th_{(\xi,\si')}(\si)=\tau\\
&\ \Leftrightarrow\;\exists\,\xi\in\Xi\ \,{\rm such\ that}\,\ \d(\xi)=\rho(\si)\,,\,\th_\xi(\si)=\tau\\
&\ \Leftrightarrow\;\si\overset{\th}{\sim}\tau.
\end{aligned}
$$
All the statements of the Proposition follow from this.
\end{proof}

Now, we discuss recurrence for the connected actions $\big(\Xi(\th'),f,\Th,\Si\big)$ and $(\Xi,\rho,\th,\Si)$. 
For $M,N\subset\Si$ we have two recurrent sets: $\widetilde{\Xi(\th')}^N_M$ (with respect to the action $\Th$) and $\widetilde{\Xi}^N_M$ (with respect to the action $\th$)\,. Using the definitions one gets
\begin{equation*}\label{falafel}
\begin{aligned}
(\xi,\si')\in\widetilde{\Xi(\th')}^N_M&\;\Leftrightarrow\;\d(\xi)=\rho'(\si')\ {\rm and}\ \exists\,\si\in M\,,\,f(\si)=\si'\ \,{\rm such\ that}\,\ \Th_{(\xi,\si')}(\si)\in N\\
&\ \Leftrightarrow\;\exists\,\si\in M,\,f(\si)=\si',\,\d(\xi)=\rho(\si)\ \,{\rm such\ that}\,\ \th_\xi(\si)\in N,
\end{aligned}
\end{equation*}
while
\begin{equation*}\label{fafalel}
\xi\in\widetilde{\Xi}^N_M\ \Leftrightarrow\;\exists\,\si\in M\,,\,\d(\xi)=\rho(\si)\ \,{\rm such\ that}\,\ \th_\xi(\si)\in N.
\end{equation*} 
Then it is clear that
\begin{equation*}\label{farfalel}
\xi\in\widetilde{\Xi}^N_M\Leftrightarrow\;\exists\,\si'\in\Si'\ {\rm such\ that}\ (\xi,\si')\in\widetilde{\Xi(\th')}^N_M\,, 
\end{equation*}
which can be written in terms of the first projection $p:\Xi\!\times\!\Si'\!\to\Xi$ as
\begin{equation}\label{farfurie}
p\Big[\widetilde{\Xi(\th')}^N_M\Big]=\widetilde{\Xi}^N_M\,.
\end{equation} 
Setting $M=\{\si\}$\,, we get the more precise version 
\begin{equation}\label{rticular}
\widetilde{\Xi(\th')}^N_\si=\widetilde{\Xi}^N_\si\!\times\!\{f(\si)\}\,.
\end{equation} 
Two particular cases are 
\begin{equation}\label{retrotic}
{\Xi(\th')}_{\rho(\si)}={\Xi}_{\rho(\si)}\!\times\!\{f(\si)\}
\end{equation} 
and
\begin{equation}\label{retrotac}
\widetilde{\Xi(\th')}^\si_\si\!=\widetilde{\Xi}^\si_\si\!\times\!\{f(\si)\}\,,
\end{equation}
obtained by setting in \eqref{rticular} $N=\Si$ or $N=\{\si\}$\,. With this, we can state the following proposition: 

\begin{prop}\label{deruta}
Consider the actions $\big(\Xi(\th'),f,\Th,\Si\big)$ and $(\Xi,\rho,\th,\Si)$\,. The action $\big(\Xi(\th'),f,\Th,\Si\big)$ is recurrently transitive if and only if $(\Xi,\rho,\th,\Si)$ is so.  One has the equalities:
\begin{equation}\label{strikat}
\Si^{\Th}_{\rm per}=\Si^\th_{\rm per}\,,\quad\Si^{\Th}_{\rm wper}=\Si^\th_{\rm wper}\,,\quad\Si^{\Th}_{\rm alper}=\Si^\th_{\rm alper}\,,\quad\Si^{\Th}_{\rm rec}=\Si^\th_{\rm rec}\,.
\end{equation} 
\end{prop}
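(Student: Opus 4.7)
My plan is to deduce everything from the four displayed identities \eqref{farfurie}, \eqref{rticular}, \eqref{retrotic} and \eqref{retrotac} that precede the statement, together with the elementary observation that for any subset $A\subset\Xi$ the product $A\times\{f(\si)\}$ is compact (resp.\ relatively compact, resp.\ syndetic in $\Xi_{\rho(\si)}\times\{f(\si)\}$) if and only if $A$ is compact (resp.\ relatively compact, resp.\ syndetic in $\Xi_{\rho(\si)}$). The underlying fact is that the projection $p:\Xi\!\times\!\Si'\to\Xi$ is continuous and its restriction to any slice $\Xi\times\{f(\si)\}$ is a homeomorphism onto $\Xi$.

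\smallskip
First I would treat \emph{recurrent transitivity}. Since the underlying spaces (and hence the families of open subsets) of the two actions coincide, formula \eqref{farfurie} immediately gives $\widetilde{\Xi(\th')}^V_U\ne\emptyset$ if and only if $\widetilde{\Xi}^V_U\ne\emptyset$, for every open $U,V\subset\Si$. This settles the first assertion.

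\smallskip
Next I would prove the identities \eqref{strikat} point by point, each time reducing to a trivial fact about Cartesian products with a singleton:
\begin{itemize}
\item For $\Si_{\rm rec}$, the equivalent formulation \ref{recur}\,(e) says that $\si\in\Si^\th_{\rm rec}$ iff $\widetilde\Xi^U_\si$ is not relatively compact for any open neighborhood $U$ of $\si$, and similarly for $\Th$. By \eqref{rticular} we have $\widetilde{\Xi(\th')}^U_\si=\widetilde\Xi^U_\si\!\times\!\{f(\si)\}$, and relative compactness passes through the homeomorphism.
\item For $\Si_{\rm wper}$, apply \eqref{retrotac}: $\widetilde{\Xi(\th')}^\si_\si$ is compact iff $\widetilde\Xi^\si_\si$ is.
\item For $\Si_{\rm per}$, combine \eqref{retrotic} and \eqref{retrotac}. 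Given compact ${\sf K}\subset\Xi$ with ${\sf K}\widetilde\Xi^\si_\si=\Xi_{\rho(\si)}$, the compact set ${\sf K}':=({\sf K}\cap\Xi_{\rho(\si)})\times\{f(\si)\}\subset\Xi(\th')$ satisfies ${\sf K}'\widetilde{\Xi(\th')}^\si_\si=\Xi(\th')_{f(\si)}$, since composition in $\Xi(\th')$ with elements of $\widetilde{\Xi(\th')}^\si_\si$ keeps the second coordinate fixed equal to $f(\si)$. Conversely, if ${\sf K}'\subset\Xi(\th')$ is compact and witnesses syndeticity for $\Th$, then ${\sf K}:=p({\sf K}')$ is compact in $\Xi$ and witnesses syndeticity for $\th$.
\item For $\Si_{\rm alper}$, exactly the same argument as for $\Si_{\rm per}$ applies, since the open neighborhoods $U$ of $\si$ are the same in both actions and \eqref{rticular} gives $\widetilde{\Xi(\th')}^U_\si=\widetilde\Xi^U_\si\!\times\!\{f(\si)\}$.
\end{itemize}

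\smallskip
The only step requiring some care is the translation of syndeticity between the two groupoids, because the definition requires a compact set in the \emph{ambient} groupoid rather than in the fibre. The point to verify is that composition inside $\Xi(\th')$ with elements $(\xi,f(\si))$ of $\widetilde{\Xi(\th')}^\si_\si$ preserves the second coordinate (because $\xi\bu_{\th'}f(\si)=f(\xi\bu_\th\si)=f(\si)$), so that only the "slice" ${\sf K}'\cap(\Xi\times\{f(\si)\})$ is effectively used. Once this is noticed, the correspondence ${\sf K}\leftrightarrow{\sf K}\times\{f(\si)\}$ establishes the equivalence of syndeticity in both directions, and the proposition follows.
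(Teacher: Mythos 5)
Your proposal is correct and follows essentially the same route as the paper's proof: both deduce everything from \eqref{farfurie}, \eqref{rticular}, \eqref{retrotic} and \eqref{retrotac}, handle recurrence via Proposition \ref{recur}, and translate syndeticity through the correspondence ${\sf K}\leftrightarrow{\sf K}\times\{f(\si)\}$ using the fact that composition with elements of $\widetilde{\Xi(\th')}^\si_\si$ fixes the second coordinate. Your only (welcome) refinement is intersecting ${\sf K}$ with $\Xi_{\rho(\si)}$ before forming the slice, which makes the inclusion ${\sf K}'\subset\Xi(\th')$ explicit where the paper leaves it implicit.
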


\begin{proof}
The statement about recurrent transitivity follows from \eqref{farfurie}, taking $M=U, N=V$ nonvoid open sets, since $p(R)\ne\emptyset$ if and only if $R\ne\emptyset$\,.

The equality $\Si^{\Th}_{\rm wper}=\Si^\th_{\rm wper}$ follows from \eqref{retrotac}: $\widetilde{\Xi(\th')}^\si_\si$ is compact if and only if $\widetilde{\Xi}^\si_\si$ is compact. 

\smallskip 
To prove $\Si^{\Th}_{\rm per}=\Si^\th_{\rm per}$\,, let ${\sf K}\subset\Xi$ be a compact set with ${\sf K}\,\widetilde\Xi_\si^\si=\Xi_{\rho(\si)}$\,. Then ${\sf K}'={\sf K}\!\times\!\{f(\si)\}\subset\Xi(\th')$ is compact and 
$$
{\sf K}'\,\widetilde{\Xi(\th')}^\si_\si\overset{\eqref{retrotac}}{=}{\sf K}'\big(\widetilde{\Xi}^\si_\si\!\times\!\{f(\si)\}\big)=\big({\sf K}\widetilde{\Xi}^\si_\si\big)\!\times\!\{f(\si)\}=\Xi_{\rho(\si)}\!\times\!\{f(\si)\}\overset{\eqref{retrotic}}{=}\Xi(\th')_{\rho(\si)}\,.
$$ 
The second equality is consequence of a computation of the form
$$
(\xi,f(\si))(\eta,f(\si))=(\xi\eta,f(\si))\,,\quad {\rm for}\ \;\eta\in \widetilde{\Xi}^\si_\si\,,\ \;{\rm implying}\ \;\eta\bu'\!f(\si)=f(\si)\,.
$$ 
So syndeticity of $\widetilde{\Xi}^\si_\si$ implies the syndeticity of $\widetilde{\Xi(\th')}^\si_\si$\,. On the other hand, if $\,\widetilde{\Xi(\th')}^\si_\si$ is syndetic with compact set ${\sf K}'$, to obtain $\sf K$ one takes the projection onto the first coordinate: ${\sf K}=p({\sf K}')$\,. The same type of argument works to show that $\Si^{\Th}_{\rm alper}=\Si^\th_{\rm alper}$\,.

\smallskip
The relation $\Si^{\Th}_{\rm rec}=\Si^\th_{\rm rec}$ holds because of \eqref{rticular}, by using Proposition \ref{recur}.
\end{proof}

\begin{rem}\label{gomplement}
Proving \eqref{strikat}, we took advantage of the fact that $\Si_{\rm per}\,,\Si_{\rm wper}\,,\Si_{\rm alper}\,,\Si_{\rm rec}$ are defined by recurrent sets involving a couple of subsets of $\Si$\,, one of them being a singleton; this allowed the application of \eqref{rticular}. One also gets $\Si^{\Th}_{\rm nw}=\Si^\th_{\rm nw}$ if $\Si'$ is supposed compact, using \eqref{farfurie}. Without compactness, only one obvious inclusion is true.
\end{rem}

\section{A disappointing notion: mixing}\label{frigider}

The next definition seems a legitimate generalization of the classical one:

\begin{defn}\label{ixing}
The action $(\Xi,\rho,\th,\Si)$ is called {\it weakly mixing} whenever for every $U,U',V,V'\!\subset\Si$ non-empty open sets, one has $\widetilde\Xi_U^V\cap \widetilde\Xi_{U'}^{V'}\!\ne\emptyset$\,. It is called {\it strongly mixing} if the complement of $\widetilde\Xi_U^V$ is relatively compact for every open sets $U,V\ne\emptyset$\,.
\end{defn}

{\it If $\,\Xi$ itself is not compact, strongly mixing implies weakly mixing}. This follows from the equality
$$
\big(\widetilde\Xi_U^V\cap \widetilde\Xi_{U'}^{V'}\big)^c=\big(\widetilde\Xi_U^V\big)^c\cup \big(\widetilde\Xi_{U'}^{V'}\big)^c
$$
and the fact that the union of two relatively compact sets is relatively compact. On the other hand, {\it weakly mixing always implies recurrent transitivity}: just take $U=U'$ and $V=V'$. 

\begin{ex}\label{safacut}
For classical group actions one recovers the usual concepts; see Example \ref{bunul}.
\end{ex}

The next result shows that there in no point in exploring the notion of mixing besides Example \ref{safacut} (the group case), which is already extensively treated in all the standard textbooks in Topological Dynamics. Recall that we assumed $\rho$ surjective and $X$ Hausdorff.

\begin{prop}
Suppose that $(\Xi,\rho,\theta,\Sigma)$ is weakly mixing. Then $\rho(\Sigma)=X$ consists of a single point, so the groupoid is a group.
\end{prop}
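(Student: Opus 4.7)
The plan is to contradict weak mixing from the Hausdorff separation of any two distinct points of $X$, using only the elementary containment $\widetilde\Xi_M^N\subset\Xi_{\rho(M)}^{\rho(N)}$ of Remark \ref{iverse}.

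First I would unpack what an element of an intersection $\widetilde\Xi_U^V\cap\widetilde\Xi_{U'}^{V'}$ tells us about $\rho$. If $\xi$ lies in this intersection, there exist $\si\in U$ and $\si'\in U'$ with $\rho(\si)=\d(\xi)=\rho(\si')$, hence
$$
\d(\xi)\in\rho(U)\cap\rho(U')\,.
$$
Thus \textbf{weak mixing forces $\rho(U)\cap\rho(U')\ne\emptyset$ for any two non-empty open subsets $U,U'\subset\Si$}.

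Next I would suppose, for contradiction, that $X$ contains two distinct points $x_1\ne x_2$. Since $X$ is Hausdorff, pick disjoint open neighborhoods $O_1\ni x_1$ and $O_2\ni x_2$. By surjectivity of $\rho$, the sets $U:=\rho^{-1}(O_1)$ and $U':=\rho^{-1}(O_2)$ are non-empty, and they are open by continuity of $\rho$\,. But $\rho(U)\subset O_1$ and $\rho(U')\subset O_2$ are disjoint, contradicting the conclusion of the previous paragraph (applied with arbitrary non-empty open $V,V'\subset\Si$, whose existence is guaranteed since $\Si\ne\emptyset$). Therefore $X$ consists of a single point.

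Finally, with $X=\{x\}$, every arrow of $\Xi$ has both source and range equal to $x$, so $\Xi^{(2)}=\Xi\!\times\!\Xi$ and $\Xi$ is a group. The only possible obstacle would be a degenerate case where non-empty open sets $V,V'\subset\Si$ are unavailable, but $\Si$ is non-empty (else $\rho$ could not be surjective onto a non-empty $X$), so this presents no difficulty.
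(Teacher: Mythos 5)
Your proof is correct and follows essentially the same route as the paper's: both derive a contradiction from the Hausdorff separation of two points of $X$ by pulling disjoint open sets back through $\rho$ and exploiting the fact that a single $\xi$ in an intersection of recurrence sets pins down one $\rho$-fibre. The only (immaterial) difference is that you read off the common value $\d(\xi)$ on the source side of the recurrence sets, whereas the paper chooses $\xi\in\widetilde\Xi_{U}^{U}\cap\widetilde\Xi_{U}^{V}$ and reads off the common value $\r(\xi)$ on the range side.
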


\begin{proof}
Suppose that exists two distinct points $x,y\in\rho(\Sigma)$\,, and let $U_0,V_0\subset X$ be disjoint open sets separating them. Form the non-void open sets $U=\rho^{-1}(U_0)$\,, $V=\rho^{-1}(V_0)$\,. By weakly mixing, there exists some $\xi\in\widetilde\Xi_{U}^{U}\cap \widetilde\Xi_{U}^{V}$. Thus, there are points $\sigma_1,\sigma_2,\sigma_3\in U,\tau\in V$ such that 
$$
\xi\bullet \sigma_1=\sigma_2\quad\textup{and}\quad \xi\bullet \sigma_3=\tau,
$$ 
implying that 
$$
\rho(\sigma_2)=\rho(\xi\bullet \sigma_1)=\r(\xi)=\rho(\xi\bullet \sigma_3)=\rho(\tau)\,.
$$
But $\rho(\sigma_2)\in U_0$ and $\rho(\tau)\in V_0$\,. This contradiction shows that $\rho(\Sigma)$ consist of a single point.
\end{proof}


\bigskip
\bigskip
ADDRESS

\smallskip
F. Flores:

Departamento de Ingenier\'ia Matem\'atica, Universidad de Chile, 

Beauchef 851, Torre Norte, Oficina 436,

Santiago, Chile

E-mail: feflores@dim.uchile.cl

\smallskip
M. M\u antoiu:

Departamento de Matem\'aticas, Universidad de Chile, 

Las Palmeras 3425, Casilla 653, 

Santiago, Chile 

E-mail: mantoiu@uchile.cl

\end{document}